\newtheorem{theorem}{Theorem}[section]
\newtheorem{lemma}[theorem]{Lemma}
\newtheorem{corollary}[theorem]{Corollary}
\newtheorem{proposition}[theorem]{Proposition}
\theoremstyle{definition}
\newtheorem{remark}[theorem]{Remark}
\newtheorem{definition}[theorem]{Definition}
\newtheorem{example}[theorem]{Example}
\newtheorem{remark/example}[theorem]{Remark/Example}
\newtheorem{question}[theorem]{Question}
\let\oldlabel=\label
\def\prellabel{\marginparsep=1em\marginparwidth=44pt
 \def\label##1{\oldlabel{##1}\ifmmode\else\ifinner\else
 \marginpar{{\footnotesize\ \\ \tt
 ##1}}\fi\fi}}
\numberwithin{equation}{section}
\def\PP{ {\bf P} }
\def\NN{ {\bf N} }
\def\ZZ{ {\bf Z} }
\def\QQ{ {\bf Q} }
\def\RR{ {\bf R} }
\def\CC{ {\bf C} }
\def\KK{ {\bf K} }
\def\F{\mathcal F}
\def\FF{\mathbf  F}
\def\MM{\mathbf  M}
\def\GG{\mathbf  G}
\def\PV{\mathbf{PV}}
\newcommand{\Rees}{\operatorname{Rees}}
\newcommand{\ini}{\operatorname{in}}
\newcommand{\gin}{\operatorname{gin}}
\newcommand{\GL}{\operatorname{GL}}
\newcommand{\mm}{\operatorname{{\mathbf m}}}
\newcommand{\depth}{\operatorname{depth}}
\newcommand{\Tor}{\operatorname{Tor}}
\newcommand{\Image}{\operatorname{Image}}
\newcommand{\projdim}{\operatorname{pd}}
\newcommand{\reg}{\operatorname{reg}}
\newcommand{\Sym}{\operatorname{Sym}}
\newcommand{\Ann}{\operatorname{Ann}}
\newcommand{\HF}{\operatorname{HF}}
\newcommand{\Rate}{\operatorname{Rate}}
\numberwithin{equation}{section}
\begin{document}
\centerline{CIME Course 2013, Combinatorial Algebraic Geometry } 
\centerline{Levico Terme - June 10 - June 15, 2013} 
 \bigskip 
 
\title{Koszul algebras and their syzygies}
\author{Aldo Conca  }
\address{ Dipartimento di Matematica,
Universit\`a degli Studi di Genova, Italy} \email{conca@dima.unige.it}
 
\subjclass[2000]{}
\keywords{}
\date{}
%\begin{abstract}
%\end{abstract}
\maketitle

\section*{Introduction}
Koszul algebras, introduced  by Priddy in \cite{P},  are positively graded $K$-algebras $R$ whose residue field $K$ has a linear free resolution as an $R$-module. Here linear means that the non-zero entries of the matrices describing the maps in the $R$-free resolution of $K$ have degree $1$. For example, if $S=K[x_1,\dots, x_n]$ is the polynomial ring   over a field $K$ then  $K$ is resolved by the Koszul complex which is linear.      In these lectures we deal  with standard graded commutative $K$-algebras, that is, quotient rings of the polynomial ring $S$ by homogeneous ideals. The program of the lectures is the following:   \medskip 
 
 \noindent {\bf Lecture 1}: Koszul algebras  and Castelnuovo-Mumford regularity. \medskip 
 
 \noindent {\bf Lecture 2}: Bounds for the degrees of the syzygies of Koszul algebras.  \medskip 
 
 \noindent {\bf Lecture 3}:   Veronese algebras and algebras associated with collections of hyperspaces.  \medskip 
 
  In the first lecture, based on the survey paper  \cite{CDR}, we present various characterizations of Koszul algebras and strong versions of Koszulness.   In the second lecture we describe recent results, obtained in cooperation with Avramov and Iyengar \cite{ACI1,ACI2},  on the bounds of the degrees of the syzygies of a Koszul algebra. Finally, the third lecture is devoted to the study of the Koszul property of Veronese algebras and of algebras associated with collections of hyperspaces and it is based on the papers  \cite{CDR,C}.

\section{Koszul algebras and Castelnuovo-Mumford regularity} 

 \subsection{Notation}
\label{s1}
Let $K$ be a field and $R$ be a commutative standard graded $K$-algebra, that is a $K$-algebra with a decomposition  $R=\bigoplus_{i\in \NN} R_i$ (as an Abelian group) such that $R_0=K$,  the vector space $R_1$ has finite dimension and $R_iR_j=R_{i+j}$ for every $i,j\in \NN$.  Let   $S$ be  the symmetric algebra  of $R_1$ over $K$.  In other words, $S$ is the polynomial ring $K[x_1,\dots,x_n]$ where $n=\dim R_1$ and $x_1,\dots,x_n$ is a $K$-basis of $R_1$. One has an induced surjection 
\begin{equation}
\label{canpre}
S=K[x_1,\dots,x_n] \to R
\end{equation} 
of standard graded $K$-algebras. We call (\ref{canpre}) the canonical presentation of $R$. Hence $R$ is isomorphic   to $S/I$ where  $I$ is the kernel of the map (\ref{canpre}). In particular, $I$ is homogeneous and does not contain elements of degree $1$. We  say that $I$ defines $R$.   Denote by $\mm_R$ the maximal homogeneous ideal of $R$. We may consider $K$ as a graded  $R$-module via the identification  $K=R/\mm_R$. \medskip 

Unless otherwise stated, we will always assume that  $K$-algebras are standard graded,  modules and ideals are graded and finitely generated,  and  module homomorphisms have degree $0$. 

\medskip 
  
For an $R$-module $M=\oplus_{i\in \ZZ}  M_i$ we denote by $\HF(M,i)$ the Hilbert function of $M$ at $i$, that is 
$$\HF(M,i)=\dim_K(M_i),$$ and by 
$$H_M(z)=\sum_{i\in \ZZ}  \dim_K(M_i) z^i\in \QQ[|z|][z^{-1}]$$
 the associated Hilbert series. 
 
 Given an integer $a\in\ZZ$ we will denote  by $M(a)$  the graded $R$-module whose degree $i$ component is $M_{i+a}$. In particular $R(-j)$ is a free $R$-module of rank $1$ whose generator has degree $j$. 

A minimal graded free resolution of $M$ as an $R$-module is a complex of free $R$-modules 
$$\FF: \cdots \to  F_{i+1} \stackrel{\phi_{i+1}} \longrightarrow  F_{i}  \stackrel{\phi_{i}} \longrightarrow  F_{i-1} \to \cdots \to F_1\stackrel{\phi_{1}} \longrightarrow F_0\to 0$$
such that: 
\begin{itemize} 
\item[(1)]  $H_i(\FF)=0$ for $i>0$,
\item[(2)] $H_0(\FF)\simeq M$, 
\item[(3)] $\phi_{i+1}(F_{i+1}) \subseteq \mm_R F_i$ for every $i$. 
\end{itemize} 

Such a resolution exists and it is unique up to an isomorphism of complexes. We hence call it  ``the" minimal free (graded) resolution of $M$.  
 
 By definition, the $i$-th Betti number $\beta_i^R(M)$ of $M$ as an $R$-module is the rank of $F_i$. Each $F_i$ is a direct sum of shifted copies of $R$. The $(i,j)$-th graded Betti number $\beta_{ij}^R(M)$ of $M$ is the number of copies of $R(-j)$ that appear in $F_i$.  By construction one has 
 $$\beta_{i}^R(M)=\dim_K  \Tor^R_i(M,K)$$  and $$\beta_{ij}^R(M)=\dim_K  \Tor^R_i(M,K)_j.$$
 
 Here and throughout the notes an index on the right of a graded module denotes the homogeneous component of that degree. 
 
  The Poincar\'e series of $M$ is defined as 
$$P_M^R(z)=\sum_{i\in \NN}  \beta_i^R(M)z^i\in \QQ[|z|],$$ 
and its bigraded version is 
$$P_M^R(s,z)=\sum_{i\in \NN,j\in \ZZ}  \beta_{i,j}^R(M)s^jz^i\in \QQ[s,s^{-1}][|z|].$$ 
We set 
$$t_i^R(M)=\sup\{ j : \beta_{ij}^R(M)\neq 0\}$$ 
where, by convention,  $t_i^R(M)=-\infty$ if $F_i=0$. By definition, $t_0^R(M)$ is the largest degree of a minimal generator of $M$. 
Two important invariants  that measure the ``growth" of the resolution of $M$ as an $R$-module are the projective dimension  
$$\projdim_R(M)=\sup\{ i : F_i\neq 0\}=\sup\{ i : \beta_{i}^R(M)\neq 0\}$$
and the (relative) Castelnuovo-Mumford regularity 
$$\reg_R(M)=\sup\{ j-i : \beta_{ij}^R(M)\neq 0\}=\sup\{ t_i^R(M)-i : i\in \NN\}.$$

An $R$-module $M$ has a linear resolution as an $R$-module  if for some $d\in \ZZ$ one has $\beta_{ij}^R(M)=0$ if $j\neq d+i$. Equivalently,  $M$ has a  linear resolution as an $R$-module if it  is generated by elements of degree $\reg_R(M)$. 
 
We may as well consider $M$ as a module over the polynomial ring $S$ via the map (\ref{canpre}).  The absolute Castelnuovo-Mumford regularity  is, by definition, the regularity $\reg_S(M)$ of $M$ as a $S$-module. It has also a cohomological interpretation via local duality, see  for example 
\cite[Section 1]{EG} or \cite[4.3.1]{BH}. Denote  by $H^i_{\mm_S}(M)$ the $i$-th local cohomology module with support on the maximal ideal of $S$. One has $H^i_{\mm_S}(M)=0$ if $i<\depth M$ or  $i>\dim M$ and 
 $$\reg_S(M)=\max\{ j+i : H^i_{\mm_S}(M)_j\neq 0\}.$$
 
  Both $\projdim_R(M)$ and $\reg_R(M)$ can be infinite. 
 \begin{example}  
 Let  $R=K[x]/(x^{v})$ with $v>1$.  Then the minimal free resolution of $K$ over $R$ is: 
 $$\cdots \to R(-2v)\to   R(-v-1)\to R(-v)\to  R(-1)\to R\to 0$$
 where the maps are multiplication by $x$ or $x^{v-1}$ depending on the parity. 
 Hence  $F_{2i}=R(-iv)$ and $F_{2i+1}=R(-iv-1)$ so that  $\projdim_R(K)=\infty$ for every $v>1$. Furthermore   $\reg_R(K)=\infty$ if $v>2$ and $\reg_R(K)=0$ if $v=2$. 
 \end{example} 
  
 Note that, in general, $\reg_R(M)$ is finite if $\projdim_R(M)$ is finite. On the other hand, as we have seen in the example above,  t  $\reg_R(M)$ can be  finite even  when  $\projdim_R(M)$ is infinite.

In the study of   minimal free resolutions over $R$,   the minimal free resolution $\KK_R$ of  the residue field $K$ as an $R$-module plays a fundamental role. This is because   $$\Tor_*^R(M,K)=H_*(M\otimes_R \KK_R)$$ and hence 
$$\beta_{ij}^R(M)=\dim_K H_i(M\otimes_R \KK_R)_j.$$ 

A very important role is played also by the Koszul complex $K(\mm_R)$ on a minimal system of generators of the maximal ideal $\mm_R$ of $R$. 
The Koszul complex is the typical example of a  differential graded algebra, DG-algebra for short.  
 
 \subsection{DG-algebras}
 A graded algebra 
 $$C=\oplus_{i\geq 0} C_i$$ 
 is  graded-commutative if for every $a\in C_i$ and $b\in C_j$ one has: 
$$ab=(-1)^{ij}ba$$
and furthermore 
$$a^2=0$$
whenever $i$ is odd. 

 A DG-algebra is a graded-commutative algebra  $C=\oplus_{i\geq 0} C_i$ equipped with a linear differential $$\partial:C\to C$$ of degree $-1$ (i.e. $\partial^2=0$ and $\partial(C_i)\subseteq C_{i-1}$)  that  satisfies the ``twisted" Leibniz rule: 
$$\partial(ab)=\partial(a)b+(-1)^i a\partial(b)$$
whenever $a\in C_i$. 

The cycles $Z(C)=\ker \partial$, the boundaries $B(C)=\Image \partial$  and the homology $H(C)=Z(C)/B(C)$ of a DG-algebra $C$  inherit the algebra structure from $C$. Precisely, $Z(C)$ is a graded-commutative subalgebra of $C$, $B(C)$ is a (two-sided) graded ideal of $Z(C)$ and hence $H(C)$ is a (graded-commutative) algebra.  

The component of  $Z(C)$ of (homological) degree $i$  is denoted by  $Z_i(C)$.   Similarly for the boundaries and the homology. 

Given a DG-algebra $C$ and a cycle $z\in Z_i(C)$ there is a canonical way to ``kill" $z$ in homology  by adding a ``variable" to $C$ preserving the  DG-algebra structure. If $i$ is even then  one considers $D=C[e]$ where $e$ is an exterior variable (hence $e^2=0$) of degree $i+1$ and extends the differential by setting $\partial(e)=z$. If $i$ is odd then one considers $D=C[s]$ where $s$ is a polynomial variable (or a divided power variable) of degree $i+1$ and extends the differential by setting $\partial(s)=z$. By construction, the element $z\in Z(C)\subset Z(D)$ is now a boundary of the complex $D$ and hence it is $0$ in homology. Furthermore, by construction,  $H_j(C)=H_j(D)$ for $j<i$. This process can clearly be iterated. One can, for instance, ``kill" all the cycles in a given homological degree by adding variables.  

\subsection{Koszul complex} 
The Koszul complex can be described in the following way. Let $R$ be any ring and let $I=(a_1,\dots,a_m)$ be an ideal of $R$. Consider $R$ as a DG-algebra concentrated in degree $0$ and the elements $a_1,\dots,a_m$  as cycles of that complex. Then we add exterior variables $e_1,\dots,e_m$ in degree $1$ to $R$ and obtain the DG-algebra 
$$K(I,R)=R[e_1,\dots,e_m] \mbox{ with } \partial(e_i)=a_i.$$
This is the Koszul complex associated with the ideal $I$ and coefficients in the ring $R$. In other words, $K(I,R)$ is the exterior algebra $\bigwedge R^m$ equipped with  the differential induced by $\partial(e_i)=a_i$ for $i=1,\dots,m$. 
If $M$ is an $R$-module we then set 
$$K(I,M)=K(I,R)\otimes_R M.$$
This is the Koszul complex associated with the ideal $I$ with coefficients in  $M$. 
Denote by  $Z(I,M)$ the module of  cycles of the complex $K(I,M)$ and similarly by $B(I,M)$ its   boundaries, by $C(I,M)$ its cokernel and  by  $H(I,M)$ its homology. We will denote by $K_i(I,M)$ the component of homological  degree $i$ of $K(I,M)$.  
When the coefficients of the Koszul complex are taken in $R$ we use a simplified notation 
$$K(I)=K(I,R), \quad Z(I)=Z(I,R)$$ 
and so on.   

By definition we have: 
$$H_0(I)=R/I \mbox{ and } H_0(I,M)=M/IM.$$
Furthermore  $H(I)$ is a (graded-commutative) algebra and $H(I,M)$ is a $H(I)$-module. In particular, $IH(I,M)=0$. 

It is well-known that the Koszul complex  $K(I)$ is acyclic (and hence an $R$-free resolution of $R/I$)  if (and only if in the local  or standard graded setting) the chosen generators $a_1,\dots,a_m$  of $I$  form a regular sequence, see \cite[1.6.14]{BH}.

When $K(I)$ is not a free resolution one can nevertheless use the procedure of  adding variables to kill homology degree by degree  to obtain from $K(I)$ a free resolution of $R/I$ as an $R$-module with a DG-algebra structure. In the local or standard graded  setting this can be done in the following way. 
\begin{itemize} 
\item[(1)] Set $T_1=K(I)$.  

\item[(2)]  Choose a minimal system of generators of $H_1(T_1)$ and a set of cycles $z_{1,1},\dots,z_{1,u_1}$ representing them. 

\item[(3)]  Add to $T_1$ a set of polynomial variables  (or divided powers in positive characteristic)  $Y_2=\{s_{2,1},\dots,s_{2,u_1}\}$ in degree $2$  and set  $$T_2=T_1[s_{2,1},\dots,s_{2,u_1} : \partial(s_{2,i})=z_{1,i}].$$

\item[(4)]  Choose a minimal system of generators of $H_2(T_2)$, a set of cycles   $z_{2,1},\dots,z_{2,u_2}$ representing them.

\item[(5)]  Add to $T_2$ a set of  exterior  variables    $Y_3=\{e_{3,1},\dots,e_{3,u_2}\}$ in degree $3$  and set 
$$T_3=T_2[e_{3,1},\dots,e_{3,u_2} : \partial(e_{3,i})=z_{2,i}]$$
\end{itemize}  
 and so on. We obtain a DG-algebra $T=R[Y_1,Y_2,Y_3,\dots]$ that is an $R$-free resolution of $R/I$ which is (essentially) independent on the choices of   the  minimal system of generators of $H_i(T_i)$ and of the cycles representing them. It is called the Tate complex and we will denote it by 
$$T(R,R/I).$$ 
We refer to \cite{A} for a precise description of the construction and many beautiful results and questions concerning it. We just give two examples:   

\begin{example}
\label{Tate1} Set  $R=\QQ[x]/(x^n)$, $a=\bar{x}$ and $I=(a)$. Then $T_1=K(I)=R[e]$ with $\partial(e)=a$ and $z=a^{n-1}e$ generates the $1$-cycles of  $K(I)$. Hence the second  round of Tate construction gives $T_2=R[e,s]$ with $\partial(s)=a^{n-1}e$. It turns out that $T(R,R/I)=R[e,s]$ is a minimal resolution of $R/I$ over $R$: 
$$\cdots \to Rs^2e \to Rs^2 \to Rse\to Rs\to Re\to R\to 0$$
where the maps are given, alternatively,  by multiplication with $a$ and $a^{n-1}$ up to non-zero scalars. 
\end{example} 

\begin{example} 
\label{Tate2} 
Set  $R=\QQ[x,y]$ and $I=(x^2,xy)$. Then we have  $T_1=K(I)=R[e_1,e_2]$ with $\partial(e_1)=x^2$ and  $\partial(e_2)=xy$.  The cycle $ye_1-xe_2$  generates $H_1(T_1)$. Hence the second  round of Tate construction gives the DG-algebra $T_2=R[e_1,e_2,s_1]$ with $\partial (s_1)=ye_1-xe_2$. Now $H_2(T_2)$ is generated by $e_1e_2-ys_1$. Hence the third  round of Tate construction gives $T_3=R[e_1,e_2,s_1,e_3]$ with $\partial(e_3)=e_1e_2-ys_1$. Hence the beginning of the Tate complex is the following: 
$$\cdots\to Re_3\oplus Rs_1e_1\oplus Rs_1e_2\to Rs_1 \oplus Re_1e_2 \to Re_2\oplus Re_1\to R\to 0.$$
\end{example} 
Note that the resolution in Example \ref{Tate1} is minimal while the one in  Example \ref{Tate2} is not.  

 \subsection{Auslander-Buchsbaum-Serre}
We return to the graded setting and assume that  $R$ is a standard graded $K$-algebra. 
When is $\projdim_R(M)$ finite  for every $M$? The answer is given  by the   Auslander-Buchsbaum-Serre Theorem, a result that, in the words of Avramov  \cite[p.32]{A1},  ``really started everything".  The graded variant of it is the following: 
\begin{theorem} 
\label{ABS}
The following conditions are equivalent:
\begin{itemize}
\item [(1)]  $\projdim_R(M)$ is finite for every $R$-module $M$, 
\item[(2)]  $\projdim_R(K)$ is finite, 
\item[(3)] the Koszul complex $K(\mm_R)$ resolves $K$,
\item[(4)]  $R$ is a polynomial ring. 
\end{itemize} 
When the conditions hold, then for every $M$ one has $\projdim_R(M)\leq \projdim_R(K)=\dim R$. 
\end{theorem}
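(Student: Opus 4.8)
The plan is to establish the cyclic chain $(1)\Rightarrow(2)\Rightarrow(3)\Rightarrow(4)\Rightarrow(1)$ and to extract the final numerical bound from the step $(4)\Rightarrow(1)$. The implication $(1)\Rightarrow(2)$ is immediate on taking $M=K$. For $(4)\Rightarrow(1)$ I would invoke Hilbert's syzygy theorem in Koszul form: when $R=S=K[x_1,\dots,x_n]$ the variables $x_1,\dots,x_n$ form a regular sequence, so by \cite[1.6.14]{BH} the Koszul complex $K(\mm_R)$ is acyclic; since its differentials have entries in $\mm_R$ it is the minimal free resolution of $K$, with $\beta_{i}^R(K)=\binom{n}{i}$ and length $n$, whence $\projdim_R K=n=\dim R$. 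For an arbitrary $M$ the identity $\Tor_i^R(M,K)=H_i(M\otimes_R K(\mm_R))$ together with $K_i(\mm_R)=0$ for $i>n$ gives $\beta_i^R(M)=0$ for $i>n$, so $\projdim_R M\le n=\projdim_R K=\dim R$, which is both conclusion (1) and the asserted inequality.

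For $(3)\Rightarrow(4)$ the hypothesis that $K(\mm_R)$ resolves $K$ means, again by \cite[1.6.14]{BH}, that $x_1,\dots,x_n$ is a regular sequence in $R$. Taking the alternating sum of Hilbert series along the graded resolution then yields $1=(1-z)^n H_R(z)$, i.e. $H_R(z)=(1-z)^{-n}=H_S(z)$. Since $I$ contains no linear forms we have $I\subseteq\mm_S^2$, and the equality $H_R=H_S$ forces $I=0$; hence $R=S$ is a polynomial ring.

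The heart of the matter, and the step I expect to be the main obstacle, is $(2)\Rightarrow(3)$: this is Serre's implication that finite projective dimension of the residue field forces regularity. I would argue by induction on $n=\dim_K R_1$, first reducing to the case of an infinite residue field by the faithfully flat base change to $R\otimes_K\ol{K}$, which leaves both $\projdim K$ and the Hilbert series unchanged. If $n=0$ then $\mm_R=0$ and $R=K$. If $n\ge 1$, the key point is to exhibit a degree-one nonzerodivisor. Were $\mm_R\in\Ass(R)$, there would be a nonzero socle element $s\in(0:_R\mm_R)$; writing $p=\projdim_R K<\infty$ and looking at the tail $F_p\xrightarrow{\phi_p}F_{p-1}$ of the minimal resolution of $K$, minimality puts the entries of $\phi_p$ in $\mm_R$, so $\phi_p(sF_p)=s\,\phi_p(F_p)\subseteq(s\mm_R)F_{p-1}=0$ while $sF_p\ne0$, contradicting the injectivity of $\phi_p$. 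Thus $\mm_R\notin\Ass(R)$, and since the finitely many associated primes are homogeneous and none equals $\mm_R$, graded prime avoidance over the infinite residue field produces $x\in R_1$ lying outside all of them, i.e. a nonzerodivisor. Passing to $\bar R=R/xR$, the standard change-of-rings relation $\projdim_{\bar R}K=\projdim_R K-1$ preserves finiteness, while $x\in R_1\setminus\{0\}$ lowers the embedding dimension to $n-1$; by induction $\bar R$ is regular, and since $x$ is a nonzerodivisor $\dim R=\dim\bar R+1=n$, so $R$ is regular and $x_1,\dots,x_n$ is a regular sequence generating $\mm_R$, which by \cite[1.6.14]{BH} is precisely $(3)$. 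The two delicate inputs here are the socle contradiction excluding $\depth R=0$ and the change-of-rings equality; everything else is comparatively routine.
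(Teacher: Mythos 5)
The paper does not prove Theorem \ref{ABS} at all: it is quoted as the classical Auslander--Buchsbaum--Serre theorem (in its graded variant) with a pointer to the literature, so there is no in-text argument to compare yours against. Your cycle $(1)\Rightarrow(2)\Rightarrow(3)\Rightarrow(4)\Rightarrow(1)$ is the standard proof and it is correct: the socle computation against the injective tail map $\phi_p$ correctly rules out $\depth R=0$, the reduction to an infinite field is genuinely needed for graded prime avoidance in $R_1$, and the Hilbert-series comparison $H_R(z)=(1-z)^{-n}=H_S(z)$ together with $I\subseteq \mm_S^2$ cleanly closes $(3)\Rightarrow(4)$. The one step you should cite rather than assert is the change-of-rings equality $\projdim_{R/xR}(K)=\projdim_R(K)-1$ for a nonzerodivisor $x\in \mm_R\setminus\mm_R^2$: this is Nagata's theorem (see, e.g., \cite[1.3.5]{A} or the corresponding statement in \cite{BH}), and for the induction you only need the weaker fact that finiteness of $\projdim K$ descends to $R/xR$, which is exactly what that result provides. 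Note also that your argument actually establishes $(2)\Rightarrow(4)$ and then deduces $(3)$ from $(4)$; since you also prove $(3)\Rightarrow(4)$ independently, the four conditions are still tied together, but it is worth being explicit that the chain as written is $(1)\Rightarrow(2)\Rightarrow(4)\Rightarrow(3)$ plus $(3)\Rightarrow(4)\Rightarrow(1)$.
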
 

\begin{remark}
\label{koscomplex} 
The Koszul complex $K(\mm_R)$ has three important features:
\begin{itemize}
\item[(1)]  it has  finite length,
\item[(2)] it has a DG-algebra structure,
\item[(3)]  the matrices describing its differentials have non-zero entries only of degree $1$.
\end{itemize}
 \end{remark}
 
When $R$ is not a polynomial ring  the minimal free  resolution $\KK_R$  of $K$ as an $R$-module does not satisfy  condition (1) of Remark \ref{koscomplex}. Can  $\KK_R$ nevertheless satisfy conditions (2) and  (3) of Remark \ref{koscomplex}? 

For condition (2) the answer is yes: $\KK_R$ has always a DG-algebra structure. Indeed a theorem, proved independently by  Gulliksen and Schoeller,  asserts that $\KK_R$ is obtained by using the Tate construction. Furthermore results of Assmus, Tate, Gulliksen and Halperin clarify when  $\KK_R$ is finitely generated as an $R$-algebra. Again, we state the theorem in the graded setting and refer to  \cite[6.3.5, 7.3.3, 7.3.4]{A} for general statements and proofs.

\begin{theorem}\label{SuperT} Let $R$ be a standard graded $K$-algebra. Let $T=T(R,K)=R[Y_1,Y_2,Y_3,\dots]$ be the Tate complex associated with $K=R/\mm_R$. We have:
\begin{itemize} 
\item[(1)]  $T$  is the minimal graded resolution of $K$, i.e. $T=\KK_R$. 
\item[(2)] $T$ is finitely generated as an $R$-algebra if and only if $R$ is a complete intersection.  In that case, $T$ is generated by elements of degree at most $2$, i.e. $Y_i=\emptyset$ for $i>2$. 
\item[(3)] If $R$ is not a complete intersection then $Y_i\not = \emptyset$ for every $i$. 
\end{itemize}
\end{theorem}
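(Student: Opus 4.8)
The plan is to recognize the cardinalities $\varepsilon_i(R):=\#Y_i$ as the deviations of $R$ (the graded dimensions $\dim_K\pi^i(R)$ of the homotopy Lie algebra) and to extract all three statements from the structure of the Tate complex together with two deep inputs: the minimality theorem of Gulliksen and Schoeller for (1), and the rigidity of deviations of Tate--Gulliksen--Halperin for (2) and (3). Throughout I would work with the acyclic closure $T=R[Y_1,Y_2,\dots]$ exactly as constructed in the excerpt.

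For (1) I would first verify that $T$ is a free resolution of $K$ at all. By construction the $i$-th round kills $H_i(T_i)$ by adjoining variables of homological degree $i+1$, and such variables leave $H_j$ unchanged for $j<i$; consequently, once $H_j$ has been made zero in round $j$ it is untouched by all later rounds, giving $H_j(T)=0$ for every $j\ge 1$ while $H_0(T)=R/\mm_R=K$, so $T$ resolves $K$. The real content is minimality, i.e. $\partial(T)\subseteq\mm_R T$. In the graded setting this is transparent up to one point: the Koszul complex $K(\mm_R)=T_1$ already has its differentials valued in $\mm_R$ (its entries are the $\pm x_i$), and at each later round the homology to be killed is annihilated by $\mm_R$ (the case $i=1$ is the excerpt's remark $\mm_R H(\mm_R,R)=0$; the general case is part of the Gulliksen--Schoeller analysis), so the classes are represented by homogeneous cycles of internal degree $\ge 2$ whose $R$-coefficients therefore lie in $\mm_R=\bigoplus_{j\ge 1}R_j$. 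The one subtlety, that such cycles can be chosen with no unit-coefficient part, is exactly the Gulliksen--Schoeller argument (see \cite{A}). Granting it, $\partial(T)\subseteq\mm_R T$, so $T$ is a minimal free resolution of $K$, and by the uniqueness of minimal resolutions $T=\KK_R$.

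For the easy implication of (2) I would argue directly and then bootstrap. If $R=S/(f)$ is a hypersurface, $H_1(\mm_R)$ is generated by a single class coming from the relation $f$; one round kills it with one degree-$2$ variable $s$, and since $f$ is a nonzerodivisor on $S$ a direct computation shows $R[e_1,\dots,e_n;s]$ is already acyclic, so $Y_i=\emptyset$ for $i\ge 3$. For a general complete intersection $R=S/(f_1,\dots,f_c)$ I would run a change-of-rings induction along $S\to S/(f_1)\to\cdots\to R$: since $I$ contains no linear forms each $f_k$ lies in $\mm^2$, and adjoining such a hypersurface relation raises $\varepsilon_2$ by $1$ while leaving every $\varepsilon_i$ with $i\ge 3$ equal to $0$ (Tate's computation). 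Thus $T=R[e_1,\dots,e_n;s_1,\dots,s_c]$ with $Y_i=\emptyset$ for $i\ge 3$, so $T$ is a finitely generated $R$-algebra generated in homological degrees $\le 2$; equivalently $P_K^R(z)=(1+z)^n/(1-z^2)^c$.

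The hard implication of (2) and all of (3) are where the genuine obstacle sits, and I would isolate it as the rigidity theorem for deviations. Finite generation of $T$ as an $R$-algebra is equivalent to $Y_i=\emptyset$ for $i\gg 0$, i.e. $\varepsilon_i(R)=0$ for $i\gg 0$. The crucial deep input (Assmus, Tate, Gulliksen, and in full generality Halperin; see \cite{A}) is: \emph{if $\varepsilon_i(R)=0$ for a single $i\ge 3$, then $R$ is a complete intersection and $\varepsilon_j(R)=0$ for all $j\ge 3$}. Granting this, finite generation forces some $\varepsilon_i=0$ with $i\ge 3$, hence $R$ is a complete intersection, which is the nontrivial direction of (2); and contrapositively, if $R$ is not a complete intersection then no $\varepsilon_i$ with $i\ge 2$ can vanish while $\varepsilon_1=\dim_K\mm_R/\mm_R^2\ne 0$, so $Y_i\ne\emptyset$ for every $i$, which is (3). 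I expect the entire difficulty to be concentrated in this rigidity statement, whose proof genuinely requires the apparatus of the homotopy Lie algebra of $R$ and its behaviour under Koszul-type change-of-rings, rather than any elementary manipulation of the Tate construction.
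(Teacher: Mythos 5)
The paper does not actually prove Theorem \ref{SuperT}; it states it and refers to \cite[6.3.5, 7.3.3, 7.3.4]{A}, which are precisely the Gulliksen--Schoeller minimality theorem and the Assmus/Tate/Gulliksen/Halperin rigidity results for deviations that you invoke. Your sketch is correct, correctly isolates the two genuinely hard inputs (minimality of the acyclic closure, and vanishing of a single higher deviation forcing the complete intersection property), and defers them to the same source, so it is essentially the same approach.
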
 

Algebras $R$ such that $\KK_R$ satisfies condition (3) in Remark \ref{koscomplex}  are called Koszul: 

\begin{definition} The $K$-algebra $R$ is Koszul if  the matrices describing  the differentials in the minimal free resolution $\KK_R$ of $K$ as an $R$-module have non-zero entries only of degree $1$, that is, $\reg_R(K)=0$ or, equivalently,  $\beta_{ij}^R(K)=0$ whenever $i\neq j$. 
\end{definition}

Koszul algebras were originally introduced  by Priddy \cite{P} in his study of  homological properties of graded (non-commutative) algebras, see the volume   \cite{PP} of  Polishchuk and Positselski for an overview and surprising aspects of the Koszul property. 
We collect below  a list of important facts about Koszul commutative algebras. We always refer to the canonical presentation (\ref{canpre}) of $R$ as a quotient of the polynomial ring $S=\Sym(R_1)$ by the homogeneous ideal $I$. 
First we introduce a definition.

\begin{definition}\label{quad} 
We say that $R$ is quadratic if its defining ideal $I$ is generated by quadrics (homogeneous polynomials of degree $2$). 
 \end{definition}

\begin{definition}\label{Gquad} 
 We say that $R$ is G-quadratic if  its defining ideal $I$ has a Gr\"obner basis of quadrics with respect to some coordinate system of $S_1$ and some term order $\tau$ on $S$. In other words, $R$ is $G$-quadratic if there exists a $K$-basis of $S_1$, say $x_1,\dots,x_n$ and a term order $\prec$  such that the initial ideal $\ini_\prec(I)$ of $I$ with respect to $\prec$ is generated by monomials of degree $2$. 
 \end{definition}

\begin{remark} 
\label{R161}
For a standard graded $K$-algebra $R$ one has 
$$\beta_{2j}^R(K)=\left\{\begin{array}{lr}
\beta_{1j}^S(R) &\mbox{ if } j\neq 2\\ \\
\beta_{12}^S(R)+\binom{n}{2} &\mbox{ if } j=2
\end{array}
\right.
 $$
and hence the resolution of $K$, as an $R$-module, is linear up to homological position $2$ if and only if $R$ is quadratic. 
In particular, if $R$ is Koszul, then $R$ is quadratic.

On the other hand there are algebras defined by quadrics that are not Koszul.  For example if one takes 
$$R=K[x,y,z,t]/(x^2,y^2,z^2,t^2,xy+zt)$$
 then one has $\beta_{34}^R(K)=5$ and hence $R$ is not Koszul.   
\end{remark} 

\begin{remark} 
\label{R162}
If $I$ is generated by monomials of degree $2$ with respect to some coordinate system of $S_1$, then a filtration argument,  that we reproduce in Theorem \ref{monKos}, shows  that $R$ is Koszul. More precisely,  for every subset $Y$ of variables $R/(Y)$ has an $R$-linear resolution.  \end{remark} 

\begin{remark} 
\label{R163}
If $I$ is generated by a regular sequence of quadrics, then $R$ is Koszul. This follows from  Theorem \ref{SuperT} because if $R$ is a complete intersection of quadrics, then $\KK_R$ is obtained from $K(\mm_R)$ by adding polynomial variables in homological degree $2$ and internal degree $2$  to kill $H_1(K(\mm_R))$. 
For example, if 
$$R=\QQ[x_1,x_2,x_3,x_4]/(x_1^2+x_2^2, x_3x_4)$$ then the  Tate resolution of $K$ over $R$ is the DG-algebra
 $$R[e_1,e_2,e_3,e_4,s_1,s_2]$$ with differential induced by $\partial(e_i)=x_i$ and $\partial(s_1)=x_1e_1+x_2e_2$ and $\partial(s_2)=x_3e_4$. Here the $e_i$'s  have internal degree $1$ and the $s_i$'s have internal degree $2$.  
\end{remark}

\begin{remark} 
\label{R164}
If $R$ is G-quadratic,  then $R$ is Koszul. This follows from  Remark \ref{R162} and from  the standard deformation argument showing that $\beta_{ij}^R(K)\leq \beta_{ij}^{A}(K)$ with $A=S/\ini_\tau(I)$. For details see, for instance, \cite[Section 3]{BC}.  
\end{remark} 

\begin{remark} 
\label{R165}
On the other hand there are Koszul algebras that are not G-quadratic. One notes that  an ideal defining a G-quadratic  algebra must contain quadrics of ``low" rank. For instance, if $R$ is Artinian and G-quadratic then its defining ideal must contain the square of a linear form. But most Artinian complete intersections of quadrics do not contain the square of a linear form. For example, the ideal $I=(x^2+yz, y^2+xz, z^2+xy)\subset S=\CC[x,y,z]$ is a complete intersection not containing the square of a linear form and $S/I$ is Artinian. Hence $S/I$ is   Koszul and not G-quadratic. See \cite{ERT} for general results in this direction. 
\end{remark} 

\begin{remark} 
\label{R166}
The Poincar\'e series $P_K^R(z)$ of $K$ as an $R$-module can be irrational (even for rings with $R_3=0$), see \cite{An}. However for a Koszul algebra $R$ one has 
\begin{equation} 
P_K^R(z)H_R(-z)=1 
\label{HilPoi} 
\end{equation}
and hence $P_K^R(z)$ is rational. Indeed the equality (\ref{HilPoi}) turns out to be equivalent to the Koszul property of $R$, see for instance \cite{F}.
\end{remark} 

\begin{remark} 
\label{R167}
 A necessary (but not sufficient) numerical condition for $R$ to be Koszul is that the formal power series $1/H_R(-z)$ has non-negative coefficients (indeed positive unless $R$ is a polynomial ring). 
 Another numerical condition is the following.  Expand the formal power series $1/H_R(-z)$ as 
$$\frac{ \Pi_{h \in 2\NN+1}  (1+z^h)^{e'_h}}{ \Pi_{h\in 2\NN+2} (1-z^h)^{e'_h}}$$
with $e'_h\in \ZZ$. This can be done in a unique way,  see \cite[7.1.1]{A}. 
Furthermore set $e_h(R)=\# Y_h$ where $Y_h$ is the set of variables that we add at the  $h$-th iteration of the Tate construction of the minimal free resolution of $K$ over $R$.  The numbers $e_h(R)$ are called the deviations of $R$. 
 If $R$ is Koszul then $e'_h=e_h(R)$ for every $h$ and hence $e_h'\geq 0$. More precisely, $e_h'>0$ for every $h$ unless $R$ is a complete intersection. 
   
For example,  the Hilbert function  of the ring in Remark \ref{R161}  is  
$$H(z)=1+4z+5z^2.$$  Expanding the series $1/H(-z)$ one sees that the coefficient of $z^6$ is negative.  Furthermore the corresponding $e'_3$ is $0$. So for two numerical reasons an algebra with Hilbert series $H(z)$ cannot be Koszul.    
\end{remark} 

The following  characterization of the Koszul property in terms of  regularity  is formally similar to the Auslander-Buchsbaum-Serre Theorem \ref{ABS}.

\begin{theorem}[Avramov-Eisenbud-Peeva]
\label{AEP}
The following conditions are equivalent:
\begin{itemize}
\item[(1)]  $\reg_R (M)$ is finite for every $R$-module $M$, 
\item[(2)]  $\reg_R(K)$ is finite, 
\item[(3)] $R$ is Koszul. 
\end{itemize} 
\end{theorem}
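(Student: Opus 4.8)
The plan is to establish the cycle $(1)\Rightarrow(2)\Rightarrow(3)\Rightarrow(1)$. The implication $(1)\Rightarrow(2)$ is immediate, since $K$ is itself a finitely generated graded $R$-module, so the blanket finiteness of $\reg_R$ specializes to $\reg_R(K)<\infty$; likewise $(3)\Rightarrow(2)$ is trivial because, by definition, $R$ Koszul means $\reg_R(K)=0$. Thus all the content lies in $(2)\Rightarrow(3)$ and $(3)\Rightarrow(1)$. I would dispatch $(3)\Rightarrow(1)$ first, as the more formal of the two, and then confront $(2)\Rightarrow(3)$, which is the heart of the matter.

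For $(3)\Rightarrow(1)$ the decisive observation is that regularity over the polynomial ring $S$ is always finite: since $S$ has finite global dimension we have $\projdim_S(M)<\infty$, hence $\reg_S(M)<\infty$ for every $M$. So it suffices to bound $\reg_R(M)$ by $\reg_S(M)$. I would obtain this through a change-of-rings comparison of the minimal $R$-resolution of $K$, the minimal $S$-resolution of $M$, and the minimal $R$-resolution of $M$. Concretely, the change-of-rings spectral sequence attached to $S\to R$ yields, for all $i$,
\[
t_i^R(M)\le \max_{a+b=i}\{t_a^R(K)+t_b^S(M)\}.
\]
When $R$ is Koszul the resolution $\KK_R$ is linear, so $t_a^R(K)=a$; substituting this and $t_b^S(M)\le \reg_S(M)+b$ gives $t_i^R(M)\le \max_{a+b=i}\{a+\reg_S(M)+b\}=\reg_S(M)+i$, i.e. $\reg_R(M)\le \reg_S(M)<\infty$. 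This is the Avramov--Eisenbud bound and it settles $(3)\Rightarrow(1)$ at once.

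The genuine obstacle is $(2)\Rightarrow(3)$: assuming $c:=\reg_R(K)<\infty$, I must show $c=0$. Here I would exploit the bigraded $K$-algebra structure on $\Tor^R_\bullet(K,K)$, graded by homological degree $i$ and internal degree $j$. Call the defect of a nonzero class in $\Tor_i^R(K,K)_j$ the number $j-i\ge 0$; then $\reg_R(K)=c$ says exactly that all defects are at most $c$, while $R$ is Koszul iff all defects vanish. The key structural fact is that defects are additive under the product: a product of classes of homological degrees $a,b$ and internal degrees $a+d_1,\,b+d_2$ lives in homological degree $a+b$ and internal degree $(a+b)+(d_1+d_2)$, hence has defect $d_1+d_2$. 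Consequently, if some class $\alpha$ has positive defect $d\ge 1$ and all powers $\alpha^n$ are nonzero, then $\reg_R(K)\ge nd\to\infty$, contradicting $c<\infty$; so no positive-defect class can exist, and $R$ is Koszul.

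The delicate point — and the step I expect to be the main obstacle — is guaranteeing, whenever $R$ is not Koszul, the existence of a positive-defect class with nonzero powers. One cannot simply raise an arbitrary nonlinear syzygy class to powers, since it may be a square-zero exterior element. The remedy is to pass to the deviations of $R$, that is, the variables $Y_h$ of the Tate construction of $\KK_R$ (Theorem \ref{SuperT} and Remark \ref{R167}): non-Koszulness forces some deviation to sit in internal degree strictly above its homological degree. Using that the even deviations contribute polynomial (divided-power) generators to $\Tor^R(K,K)$, whose divided powers are genuine basis elements and hence nonzero, and that brackets produce even-degree classes of additive defect out of odd ones, one manufactures honest classes of unbounded defect, contradicting finiteness of $c$. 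Making this manufacture precise, with due care for the characteristic and the divided-power structure, is the technical core (the theorem of Avramov--Peeva), and it is where essentially all of the effort would go.
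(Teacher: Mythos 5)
Your decomposition is exactly the one the paper uses: $(1)\Rightarrow(2)$ is formal, $(3)\Rightarrow(1)$ is the Avramov--Eisenbud bound $\reg_R(M)\leq \reg_S(M)$, and $(2)\Rightarrow(3)$ is the Avramov--Peeva rigidity theorem. The paper does not reprove either hard implication --- it cites \cite{AE} and \cite{AP} --- so as architecture your proposal matches the source. The difficulties lie in the two places where you try to supply the content of those citations. For $(3)\Rightarrow(1)$, the inequality $t_i^R(M)\le \max_{a+b=i}\{t_a^R(K)+t_b^S(M)\}$ does not fall out of ``the change-of-rings spectral sequence'' as asserted. The standard spectral sequence for $S\to R$, namely $\Tor_p^R(M,\Tor_q^S(R,K))\Rightarrow \Tor^S_{p+q}(M,K)$, carries the $R$-Betti numbers on its $E_2$-page and the $S$-Betti numbers in the abutment, so what it yields for free is a bound on $t_i^S(M)$ in terms of the $t_a^R(M)$ and $t_b^S(R)$ --- the opposite of what you need. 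Reversing it requires controlling the differentials leaving the $q=0$ column, and that only exhibits a submodule of $\Tor_i^R(M,K)$ as a subquotient of the abutment. Your target $\reg_R(M)\leq\reg_S(M)$ is correct (it is the theorem of \cite{AE}), but the derivation you give for it is an assertion, not a proof; the genuine argument needs a different device (compare the use of Lemma \ref{blabla} elsewhere in these notes, where one must also handle the nonvanishing higher homology of a base-changed resolution).

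For $(2)\Rightarrow(3)$ your reduction is sound as far as it goes: the product on $\Tor^R(K,K)$ is bigraded, defects add, and a positive-defect class all of whose powers are nonzero forces $\reg_R(K)=\infty$. But the existence of such a class whenever $R$ is not Koszul is precisely the content of \cite{AP}, and the manufacture you sketch (deviations, divided powers, brackets) does not yet produce one: an arbitrary nonlinear class may be nilpotent, odd-homological-degree classes square to zero, and in positive characteristic the divided powers of an even class need not span a polynomial subalgebra of $\Tor^R(K,K)$. Avramov and Peeva resolve this by passing to the homotopy Lie algebra of $R$ and invoking a freeness theorem for its subalgebras; nothing in your outline substitutes for that input, and you say as much. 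So the proposal is an honest, correctly structured reduction to the same two external theorems the paper cites, but it is not an independent proof of either of them, and the one step you do claim to derive (the change-of-rings inequality) is not justified as written.
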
  

 Avramov and Eisenbud proved in \cite{AE} that every module $M$ has finite regularity over a Koszul algebra $R$ by showing that $\reg_R(M)\leq \reg_S(M)$.  Avramov and Peeva showed in \cite{AP} that if  $\reg_R (K)$ is finite then it must be $0$. Indeed they proved a more general result for graded algebras that are not necessarily standard.

 \medskip
 
We collect below  further remarks, examples and questions  relating the Koszul property and the existence of Gr\"obner bases of quadrics in various ways. Let us recall the following:
 
 \begin{definition}
 \label{LG-quad}
 A $K$-algebra $R$ is LG-quadratic (where the L stands for lifting) if there exist a G-quadratic algebra $A$ and a regular sequence of linear forms $y_1,\dots,y_c$ such that $R\simeq A/(y_1,\dots,y_c)$. 
 \end{definition}

 We have the following implications: 
 \begin{equation}
 \label{GLG}
   \mbox{G-quadratic } \Rightarrow \mbox{LG-quadratic }  \Rightarrow \mbox{Koszul}  \Rightarrow \mbox{quadratic }
   \end{equation}
 
As discussed in Remark \ref{R161} and Remark \ref{R167} the third  implication in  (\ref{GLG}) is  strict.  The following remark, due to Caviglia, in connection with Remark \ref{R165} shows that also the first implication in  (\ref{GLG}) is strict. 
 \begin{remark} 
 Any complete intersection $R$ of quadrics is LG-quadratic. 
 Say 
 $$R=K[x_1,\dots,x_n]/(q_1,\dots,q_m)$$ 
 is a complete intersection of quadrics. Then set 
 $$A=R[y_1,\dots,y_m]/(y_1^2+q_1,\dots,y_m^2+q_m)$$ and note that $A$ is G-quadratic because  the initial ideal of its defining ideal with respect to a  lex term order satisfying  $y_i>x_j$ for every $i,j$ is $(y_1^2,\dots,y_m^2)$. Furthermore  $y_1,\dots,y_m$ is a regular sequence in $A$ because 
 $$A/(y_1,\dots,y_m)\simeq R$$ and $\dim A-\dim R=m$.
 \end{remark} 
 
  In   \cite[1.2.6]{Ca1}, \cite[6.4]{ACI1} and \cite[12]{CDR} it is asked whether a Koszul algebra is also LG-quadratic. The following example gives a negative answer to the question. 
  
 \begin{example}
\label{KosNotLG} 
 Let $$R=K[a,b,c,d]/(ac, ad, ab - bd, a^2 + bc, b^2).$$
 The Hilbert series of $R$ is $$\frac{1+2z-2z^2-2z^3+2z^4}{(1-z)^2}.$$  Also, $R$ is Koszul as can be shown using a Koszul  filtration argument, see Example \ref{non-LG with filtration}.  The $h$-polynomial does not change under lifting with regular sequences of linear forms.   Hence to check that $R$  is not  LG-quadratic  it is enough to check that  there is no algebra with  quadratic monomial relations and  with $h$-polynomial equal to
 $$h(z)=1+2z-2z^2-2z^3+2z^4.$$ 
 
 In general, if  $J$ is an ideal in  a polynomial ring $A$ not containing linear forms and the  $h$-polynomial of $A/J$ is $1+h_1z+h_2z^2+\dots$ then $J$ has codimension $h_1$ and exactly $\binom{h_1+1}{2}-h_2$ quadratic generators. 

 Now consider a quadratic monomial ideal $J$ in a polynomial ring $A$ with,  say, $n$ variables  such that  the  $h$-polynomial of $A/J$ is $h(z)$. 

Such a $J$ must  have codimension $2$ and  $5$ generators. So $J$ is generated by $5$ monomials chosen among the generators of $(a,b)(a,b,c,d,e,f,g)$ where $a,b,\dots,g$ are distinct variables.  But an exhaustive CoCoA \cite{CoCoA} computation  shows that such a selection does not exist.
\end{example} 

 An interesting example of LG-quadratic algebra is the following: 

\begin{example}
\label{LGNonObstructed} 
Let 
$$R=K[a,b,c,d]/(a^2-bc, d^2, cd, b^2, ac, ab).$$
The Hilbert series of $R$ is $$\frac{1+3z-3z^3}{(1-z)}.$$  The $h$-polynomial  $1+3z-3z^3$ is not the $h$-polynomial of a quadratic  monomial ideal  in $4$ variables. It is however 
the $h$-polynomial of a (unique up to permutations of the variables)  quadratic  monomial ideal in $5$ variables, namely $U_1=(a^2, b^2, ad, cd, be, ce)$. 
In $6$ variables there is another quadratic monomial ideal with that $h$-polynomial. It is $U_2=(a^2, ad, bd, be, ce, cf)$.  Another one  in $7$ variables, $U_3=(ad, bd, ae, ce, bf, cg)$.  And that is all. 
It turns out that $R$ is LG-quadratic since it lifts to $K[a,b,c,d,e]/J$ with $$J=(a^2 - bc + be, d^2, cd, b^2+eb, ac, ab + ae)$$
 and $\ini_\prec(J)$ is $U_1$ (up to a permutation of the variables) where  $\prec$  is the rev.lex. order associated with the total order of the variables $e>d>b>c>a$. 
\end{example}

The ring of Example \ref{KosNotLG} is not LG-quadratic because of the obstruction in the $h$-polynomial, i.e., there are no quadratic monomial ideals with that $h$-polynomial.  It would be interesting to identify a Koszul algebra with a  non-obstructed $h$-polynomial that is not LG-quadratic.

 \section{Syzygies of Koszul algebras} 
 The second lecture is based on the results obtained jointly with  Avramov and Iyengar and   published in   \cite{ACI1,ACI2}. 
 
 Given a standard graded $K$-algebra $R$ with presentation $$R=S/I$$ where $S=K[x_1,\dots,x_n]$ and $I\subset S$ is an homogeneous ideal, we set 
 $$t_i^S(R)=\sup\{ j : \beta_{ij}^S(R)\neq 0\}$$
 i.e., $t_i^S(R)$ is the highest degree of a minimal $i$-th syzygy of $R$ as a $S$-module. In particular, $t_0^S(R)=0$ and $t_1^S(R)$ equals the 
   highest degree of a minimal generator of $I$. 

The starting point of the discussion is the following observation. 

\begin{remark} 
\label{boundsyz}
If  $I$ is generated by monomials of degree $2$, then:
 \begin{itemize} 
 \item[(1)] $t_i^S(R)\leq 2i$ for every $i$,
 \item[(2)] $\reg_S(R)\leq \projdim_S(R)$,
 \item[(3)] if $t_i^S(R)<2i$ for some $i$  then  $t_{i+1}^S(R)<2(i+1)$,
 \item[(4)] $t_i^S(R)<2i$ if $i>\dim S-\dim R$, 
 \item[(5)] $\beta^S_i(R)\leq \binom{\beta_1^S(R)}{i}$,
 \item[(6)] $\projdim_S(R)\leq \beta_1^S(R)$.
 \end{itemize} 
These inequalities  are deduced from  the  (non-minimal) Taylor resolution of quadratic monomial ideals, see for instance \cite[4.3.2]{MS}.
\end{remark}

Suppose now that, combining and iterating the following operations: 
\begin{itemize} 
\item[(i)] changes of coordinates, 
\item[(ii)] the formation of  initial ideals with respect to weights or term orders,
\item[(iii)] liftings and specializations with regular sequences of degree $1$, 
\end{itemize} 

the algebra $R$ deforms  to  an algebra $R'=S'/J$ with $S'$ a polynomial ring and $J$ generated by  quadratic monomials.  Then $R$ satisfies the inequalities (1), (2), (5) and (6) because the Betti numbers and the $t_i$'s can only grow passing from $R$ to $R'$ and $\beta_{1}^S(R)=\beta_{12}^S(R)$ equals $\beta_{1}^{S'}(R')=\beta_{12}^{S'}(R')$.  This observation suggests the following question: 

\begin{question}
Are the bounds of Remark \ref{boundsyz} valid for every Koszul algebra? 
\end{question}

In  \cite{ACI1} we have proved that (1), (2), (3) and (4)   hold for every Koszul algebra.  As far as we know it is still  open whether (5) and (6) hold as well for Koszul algebras.  The inequality (1) for Koszul algebras (and its immediate consequence (2)) has a short proof that we present  below, see Lemma \ref{soeasy}. 

In \cite{ACI2} stronger limitations for the degrees of the syzygies of Koszul algebras are described (under mild assumptions 
on the characteristic of the base field).  To explain the results in \cite{ACI1} and \cite{ACI2} we start from some  general considerations concerning  bounds on the $t_i$'s. 

For $S=K[x_1,\dots,x_n]$ and $R=S/I$ (and no assumptions on $R$), a very basic question is whether one can bound $t_i^S(R)$ only in terms of $t_1^S(R)$ and the index $i$. The answer is negative in this generality, but it is positive if one involves also the number of variables $n$.  
Indeed, in \cite{BM} and \cite{CaS} it is proved that 
\begin{equation}
 \label{intro0}
t_i^S(R) \leq (2t_1^S(R))^{2^{n-2}}-1+i.  
\end{equation}
Furthermore variations of the Mayr-Meyer  ideals   define algebras having a doubly exponential syzygy growth of the kind of the right-hand side of (\ref{intro0}) (but with slightly different coefficients), see \cite{BS,Ko}.   So,  without any further assumption,    one cannot expect any better bound for the $t_i^S(R)$ in terms of $t_1^S(R)$ than the one  derived from (\ref{intro0}). 

Things change drastically if either $R$ is defined by monomials (i.e. $I$ is a monomial ideal) or $R$ is Koszul.   Under these assumptions we have  that: 
 \begin{equation}
 \label{intro1}
 t_i^S(R)\leq t_1^S(R)i 
 \end{equation}
 holds for every $i$.  
  In particular 
 \begin{equation}
 \label{intro1c}
  t_i^S(R)\leq 2i \ \mbox{ for every $i$}
   \end{equation}
holds for Koszul algebras since $t_1^S(R)=2$. 

 When $R$ is defined by monomials (\ref{intro1}) is  derived from the Taylor resolution, while when $R$ is Koszul 
 (\ref{intro1c}) is  proved by Kempf \cite{K}, in  \cite{ACI1}   and also in an unpublished manuscript of Backelin \cite{B2}. 

 One can ask whether the inequalities (\ref{intro1}) and   (\ref{intro1c})  are  special cases of, or can be derived from,  more  general statements. 
We consider the following  generalization of  (\ref{intro1}): 
\begin{equation}
 \label{intro3}
 t_{i+j}^S(R)\leq t^S_{i}(R)+t^S_{j}(R)   \mbox{ for every } i \mbox{ and } j.
  \end{equation}

No counterexample is known to us to the validity of  (\ref{intro3}) for algebras with monomial relations or for Koszul algebras.

Also,  (\ref{intro3}) for $j=1$  holds for algebras defined by  monomials, see \cite[1.9]{FG} where the statement is proved when  $R$ is defined by monomials of degree $2$ and  \cite{HS} for the general case. 
Furthermore  in \cite[4.1]{EHU} it is proved that  (\ref{intro3}) holds for algebras of Krull dimension at most $1$ when  $i+j=n$, see also \cite{Mc} for related results.

Denote by  
$$Z=\bigoplus_{i\geq 0}  Z_i=\bigoplus_{i\geq 0}  Z_i(\mm_R),$$
$$B=\bigoplus_{i\geq 0}  B_i=\bigoplus_{i\geq 0}  B_i(\mm_R),$$
and by
$$H=\bigoplus_{i\geq 0}  H_i=\bigoplus_{i\geq 0}  H_i(\mm_R)$$  
  the modules of cycles, boundaries and homology  of the Koszul complex $K(\mm_R)$ associated  with the maximal homogeneous ideal $\mm_R$ of $R$. Similarly,  $Z_i(\mm_R,M)$ stands for the $i$-th cycles of $K(\mm_R,M)=K(\mm_R)\otimes M$ and so on. 
  By construction, 

\begin{equation}
 \label{intro3b}
t_i^S(R)=\sup\{ j : (H_i)_j\neq 0\}.
  \end{equation}

For a Koszul algebra $R$ we have shown in \cite{ACI1} that the map\begin{equation}
\wedge^i H_1\to H_i
\end{equation} 
  induced by the multiplicative structure on $H$ is surjective in  degree $2i$  and higher. Hence   \ref{intro1c}  (for Koszul algebras)  is an immediate  corollary of that assertion. 
The inequality  (\ref{intro3}) would follow from a similar statement regarding the multiplication map 
\begin{equation}
 \label{intro4}
H_i\otimes H_{j} \to H_{i+j}.
  \end{equation}
Indeed  it would be enough to prove that the map \ref{intro4} is surjective in degree  $t_i^S(R)+t_j^S(R)$  and higher.  Unfortunately we are not able to evaluate  directly the cokernel of the map \ref{intro4}. Instead we  can get some information  by using the splitting map for Koszul cycles described originally in \cite{BCR2} and rediscussed in  \cite{ACI2} from a more general perspective.  Indeed in   \cite[2.2] {ACI2}  it is proved that:  

\begin{theorem} 
\label{rimpo}
Let $M$ be a graded  $R$-module. For even $i,j$ there is a natural  map (of degree $0$) 
\begin{equation} 
\label{splitmap}
\Tor_1^R(C_{i-1}, Z_j(\mm_R,M))\to H_{i+j}(\mm_R, M)/H_{i}H_{j}(\mm_R, M)
\end{equation} 
that is surjective provided $R$ has characteristic $0$ or large. Here $C_{i-1}$ denotes the cokernel of the Koszul complex $K(\mm_R)$ in homological position $i-1$. 
\end{theorem}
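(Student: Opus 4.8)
The plan is to build the map at the chain level, using that $i$ is even to turn multiplication by Koszul cycles into a chain operation, and then to prove surjectivity by means of the comultiplication on the Koszul algebra, which is the form the splitting map of \cite{BCR2} takes here. The starting observation is that, since $i$ is even, multiplication by a cycle $z\in Z_i(\mm_R)$ is a chain map: for any $m\in K(\mm_R,M)$ one has $\partial(z\wedge m)=\partial z\wedge m+(-1)^i z\wedge\partial m=z\wedge\partial m$. Hence a product of cycles is again a cycle, boundaries multiply cycles into boundaries (here $j$ even is used symmetrically), and on passing to homology the multiplication $Z_i(\mm_R)\otimes_R Z_j(\mm_R,M)\to Z_{i+j}(\mm_R,M)$ has image exactly the submodule $H_iH_j(\mm_R,M)$. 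This identifies the target as a quotient of $Z_{i+j}(\mm_R,M)$ modulo boundaries and products of cycles.

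To construct the map I would present $\Tor_1^R(C_{i-1},Z_j(\mm_R,M))$ from the short exact sequence $0\to B_{i-1}\to K_{i-1}\to C_{i-1}\to 0$. Since $K_{i-1}$ is $R$-free, hence flat, this gives $\Tor_1^R(C_{i-1},N)\cong\ker\big(B_{i-1}\otimes_R N\to K_{i-1}\otimes_R N\big)$ for $N=Z_j(\mm_R,M)$. Given such a kernel element $\sum_\alpha b_\alpha\otimes n_\alpha$, choose preimages $b_\alpha=\partial c_\alpha$ with $c_\alpha\in K_i$ and set $\Phi\big(\sum_\alpha b_\alpha\otimes n_\alpha\big)=\big[\sum_\alpha c_\alpha\wedge n_\alpha\big]$. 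The chain $\sum_\alpha c_\alpha\wedge n_\alpha$ is a cycle because, $i$ being even and the $n_\alpha$ being cycles, $\partial(\sum_\alpha c_\alpha\wedge n_\alpha)=\sum_\alpha\partial c_\alpha\wedge n_\alpha=\sum_\alpha b_\alpha\wedge n_\alpha$ is the image under multiplication of the class $\sum_\alpha b_\alpha\otimes n_\alpha$, which vanishes already in $K_{i-1}\otimes_R Z_j(\mm_R,M)$ by the defining kernel condition. Changing $c_\alpha$ by a cycle alters the homology class only by a product of cycles, so $\Phi$ is well-defined modulo $H_iH_j(\mm_R,M)$; it clearly preserves internal degree and is natural in $M$, since every ingredient (Koszul complex, cycles, boundaries, multiplication, and the connecting isomorphism) is functorial.

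For surjectivity I would invoke the comultiplication $\Delta$ on the exterior Koszul algebra, a chain map whose $(i,j)$-component satisfies $\mu\circ\Delta_{i,j}=\binom{i+j}{i}\,\mathrm{id}$. When $\chara K=0$ or is large enough that $\binom{i+j}{i}$ is invertible, $\tfrac{1}{\binom{i+j}{i}}\Delta_{i,j}$ splits the multiplication $\mu$. Starting from a cycle $\zeta\in Z_{i+j}(\mm_R,M)$ representing an arbitrary class, I would write $\tfrac{1}{\binom{i+j}{i}}\Delta_{i,j}(\zeta)=\sum_\alpha a_\alpha\otimes m_\alpha$, so that $\sum_\alpha a_\alpha\wedge m_\alpha=\zeta$. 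Because $\Delta$ is a chain map and $\zeta$ is a cycle, the two bidegree components of the total differential vanish separately, yielding $\sum_\alpha\partial a_\alpha\otimes m_\alpha=0$ in $K_{i-1}\otimes_R K_j(\mm_R,M)$ and $\sum_\alpha a_\alpha\otimes\partial m_\alpha=0$ in $K_i\otimes_R K_{j-1}(\mm_R,M)$. The second relation is what should allow one to correct the chains $m_\alpha$ by boundaries --- a modification that only changes $\zeta$ by a product with a class of $H_i$ and by a boundary --- so as to arrange the $m_\alpha$ to be cycles; the first relation then exhibits $\sum_\alpha\partial a_\alpha\otimes m_\alpha$ as an element of $\ker(B_{i-1}\otimes_R Z_j\to K_{i-1}\otimes_R Z_j)=\Tor_1^R(C_{i-1},Z_j(\mm_R,M))$ whose image under $\Phi$ is $[\zeta]$.

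The main obstacle is precisely this last organizational step: converting the total-cycle relations produced by $\Delta$ into a genuine element of $\Tor_1^R(C_{i-1},Z_j(\mm_R,M))$. The difficulty is that the $m_\alpha$ coming from comultiplication are only chains, not cycles, so one must use the second relation together with the even-degree chain-map property to replace them by cycles modulo the subspaces already killed in the quotient, keeping careful track of signs (this is where both $i$ and $j$ even enter). The role of the characteristic hypothesis is confined to inverting $\binom{i+j}{i}$, while the role of \cite{BCR2} is to package the comultiplication-based splitting so that these corrections can be performed functorially; checking that they are compatible with the well-definedness of $\Phi$ is the technical heart of the argument.
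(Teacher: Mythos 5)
Your construction of the map itself is sound and is essentially the canonical one: since $K_{i-1}$ is free, the short exact sequence $0\to B_{i-1}\to K_{i-1}\to C_{i-1}\to 0$ identifies $\Tor_1^R(C_{i-1},Z_j(\mm_R,M))$ with $\ker\bigl(B_{i-1}\otimes_R Z_j(\mm_R,M)\to K_{i-1}\otimes_R Z_j(\mm_R,M)\bigr)$, and lifting along the surjection $\partial\otimes 1\colon K_i\otimes_R Z_j(\mm_R,M)\to B_{i-1}\otimes_R Z_j(\mm_R,M)$ followed by Koszul multiplication gives a cycle whose ambiguity lies in $Z_i\cdot Z_j(\mm_R,M)$, hence dies in $H_{i+j}(\mm_R,M)/H_iH_j(\mm_R,M)$. (A small tidy-up: choose the lift of the whole tensor along that surjection rather than lifting each $b_\alpha$ separately, so that well-definedness on $B_{i-1}\otimes_R Z_j(\mm_R,M)$ is automatic.) Note that the paper itself offers no proof of this theorem --- it only cites \cite[2.2]{ACI2} and \cite{BCR2} --- so the comparison here is with your argument on its own terms.

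The surjectivity half, however, has a genuine gap, and it is exactly the step you flag as the ``main obstacle.'' The claim that the two bidegree components of the total differential of $\Delta(\zeta)$ vanish separately is false: $\Delta(\zeta)$ is a cycle only for the total differential of $K(\mm_R)\otimes K(\mm_R,M)$, so what vanishes in bidegree $(i-1,j)$ is $(\partial\otimes 1)\Delta_{i,j}(\zeta)\pm(1\otimes\partial)\Delta_{i-1,j+1}(\zeta)$, and similarly in bidegree $(i,j-1)$; neither $\sum_\alpha\partial a_\alpha\otimes m_\alpha$ nor $\sum_\alpha a_\alpha\otimes\partial m_\alpha$ is zero by itself. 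Consequently the $m_\alpha$ are not cycles, and you have not produced an element of $B_{i-1}\otimes_R Z_j(\mm_R,M)$ at all, let alone one lying in the kernel that computes $\Tor_1$ and mapping to $[\zeta]$. The entire content of the theorem is the correction procedure that repairs this: an inductive modification of $\Delta_{i,j}(\zeta)$ using the neighbouring bidegree components so that the right-hand tensor factors become cycles, while changing $\zeta$ only by boundaries and by elements of $H_iH_j(\mm_R,M)$; this is precisely the splitting map of \cite{BCR2} as reorganized in \cite[Section 2]{ACI2}, and the hypothesis on the characteristic is there to make that induction (not merely the single inversion of $\binom{i+j}{i}$) go through. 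Invoking the splitting map by name without carrying out this correction leaves surjectivity unproved.
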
 
Taking $M=R$ one obtains  a natural map: 
$$\Tor_1^R(C_{i-1}, Z_j)=\Tor_1^R(B_{i-1},B_{j-1})\to H_{i+j}/H_iH_j$$
that is surjective in characteristic $0$ or large.   Note that $\Tor_1^R(B_{i-1},B_{j-1})$  is a finite length module because the $B_u$'s are free when localized at any non-maximal prime homogeneous ideal.   In particular one has: 

\begin{proposition} Set $T_{ij}=\Tor_1^R(B_{i-1},B_{j-1})$. If $R$ has characteristic $0$ or large then 
\begin{equation}
 \label{intro4b}
t_{i+j}^S(R)\leq \max\{t_{i}^S(R)+t_{j}^S(R),  \reg_S T_{ij} 
  \}  
   \end{equation}
where  
$$ \reg_S T_{ij} =  \max\{ v :  (T_{ij})_v\neq 0  \}.$$
\end{proposition}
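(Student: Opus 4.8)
The plan is to read the bound off directly from the surjection produced by Theorem \ref{rimpo}, combined with the identification $t_{i+j}^S(R)=\max\{d:(H_{i+j})_d\neq 0\}$ supplied by (\ref{intro3b}). The whole argument is a graded degree count: the genuine content sits in Theorem \ref{rimpo}, which I take as given, and which fixes the working range (even $i,j$, characteristic $0$ or large) to which the estimate (\ref{intro4b}) applies.

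First I would fix an internal degree $d$ with $(H_{i+j})_d\neq 0$ and aim to show $d\leq\max\{t_i^S(R)+t_j^S(R),\reg_S T_{ij}\}$; taking the supremum over all such $d$ then gives (\ref{intro4b}) via (\ref{intro3b}). To this end I use the graded short exact sequence
$$0\to H_iH_j\to H_{i+j}\to H_{i+j}/H_iH_j\to 0,$$
where $H_iH_j\subseteq H_{i+j}$ is the image of the multiplication map $H_i\otimes H_j\to H_{i+j}$ coming from the algebra structure on $H=H(\mm_R)$. Passing to the degree-$d$ component, exactness forces that either $(H_iH_j)_d\neq 0$ or $(H_{i+j}/H_iH_j)_d\neq 0$, and I treat the two alternatives separately.

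If $(H_iH_j)_d\neq 0$, then since $(H_iH_j)_d$ is spanned by products of homogeneous classes, there exist $a,b$ with $a+b=d$, $(H_i)_a\neq 0$ and $(H_j)_b\neq 0$; by (\ref{intro3b}) this gives $a\leq t_i^S(R)$ and $b\leq t_j^S(R)$, hence $d\leq t_i^S(R)+t_j^S(R)$. If instead $(H_{i+j}/H_iH_j)_d\neq 0$, I invoke the case $M=R$ of Theorem \ref{rimpo}, which (in characteristic $0$ or large) furnishes a surjection of degree $0$
$$T_{ij}=\Tor_1^R(B_{i-1},B_{j-1})\to H_{i+j}/H_iH_j.$$
Being of degree $0$, this map is graded and surjective in each internal degree, so $(H_{i+j}/H_iH_j)_d\neq 0$ forces $(T_{ij})_d\neq 0$, whence $d\leq\reg_S T_{ij}$ by the very definition of $\reg_S T_{ij}$ as the top nonvanishing degree of the finite length module $T_{ij}$.

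Combining the two cases, every $d$ with $(H_{i+j})_d\neq 0$ satisfies $d\leq\max\{t_i^S(R)+t_j^S(R),\reg_S T_{ij}\}$, which is (\ref{intro4b}). I do not expect a real obstacle: granting Theorem \ref{rimpo}, the statement is a formal consequence of the grading. The only points demanding care are the compatibility of the surjection with the internal grading (guaranteed by its being of degree $0$) and the elementary degree estimate for the product $H_iH_j$; the latter is exactly the reason the multiplicative piece contributes the summand $t_i^S(R)+t_j^S(R)$, while the nonmultiplicative defect is controlled by $\reg_S T_{ij}$.
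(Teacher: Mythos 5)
Your argument is correct and is essentially the paper's own: the proposition appears there as an immediate consequence (``In particular one has\dots'') of the surjection $\Tor_1^R(B_{i-1},B_{j-1})\to H_{i+j}/H_iH_j$ obtained from Theorem \ref{rimpo} with $M=R$, combined with the identification (\ref{intro3b}) and the evident degree bound on the product $H_iH_j$. Your write-up merely makes explicit the graded bookkeeping that the paper leaves to the reader.
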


In order to evaluate   $\reg_S T_{ij}$   we have developed in \cite{ACI2} a long and technically complicated  inductive procedure. The results obtained take a simpler form in the Cohen-Macaulay case because, under such assumption, the $t_i^S(R)$'s form an increasing sequence. 
We have: 

 \begin{theorem} 
 \label{mainACI1} Let  $R$ be a Koszul and  Cohen-Macaulay algebra of characteristic $0$.  Then 
 \begin{equation}
    \label{intro6b}
      t_{i+1}^S(R)\leq t_i^S(R)+t_1(R)=t_i^S(R)+2
  \end{equation}
 and 
\begin{equation}
    \label{intro6}
      t^S_{i+j}(R)\leq \max\{ t_i^S(R)+t_j^S(R) , t_{i-1}^S(R)+t_{j-1}^S(R)+3\}   
  \end{equation}
hold for every  $i$ and $j$. 
\end{theorem}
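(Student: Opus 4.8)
The strategy is to read both inequalities off the multiplicative structure of the Koszul homology algebra $H=\bigoplus_i H_i(\mm_R)$, exploiting that $t_i^S(R)=\max\{j:(H_i)_j\neq 0\}$ by \eqref{intro3b} and that each $H_i$ is a finite-length $K$-vector space. The engine is the bound \eqref{intro4b} coming from the splitting map of Theorem \ref{rimpo}: for \emph{even} $i,j$ and $\chara K=0$ one has
\begin{equation*}
t_{i+j}^S(R)\leq \max\{t_i^S(R)+t_j^S(R),\ \reg_S T_{ij}\},\qquad T_{ij}=\Tor_1^R(B_{i-1},B_{j-1}),
\end{equation*}
and since $T_{ij}$ has finite length, $\reg_S T_{ij}$ is just its top nonzero degree. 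Thus, granting even indices for a moment, everything reduces to the single estimate $\reg_S T_{ij}\leq t_{i-1}^S(R)+t_{j-1}^S(R)+3$, after which \eqref{intro6} is immediate. The two remaining issues are the Cohen--Macaulay hypothesis, which I will use to force monotonicity of the $t_i$, and the parity restriction in Theorem \ref{rimpo}.

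First I would record the structural consequence of the hypothesis: the sequence $t_i^S(R)$ is \emph{strictly increasing} for $0\le i\le \projdim_S R$. This is what produces the ``simpler form'', and it is characteristic-free. The argument is duality over $S$: since $R$ is Cohen--Macaulay of codimension $c=\projdim_S R$, the dual $\operatorname{Hom}_S(F_\bullet,S)$ of its minimal free resolution $F_\bullet$, read backwards, is the minimal free resolution of the canonical module $\omega_R$ up to a twist. In any minimal resolution the least degree of a generator strictly increases along the complex, because a minimal generator maps into $\mm_S$ times the previous free module; transporting this through the duality turns least degrees of $\omega_R$ into top degrees of $R$ and yields $t_{i-1}^S(R)<t_i^S(R)$. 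In particular $t_{i-1}^S(R)+t_{j-1}^S(R)+3\le t_i^S(R)+t_j^S(R)+1$, so the two competing terms in \eqref{intro6} are genuinely close, and it is this collapse that the whole proof leans on.

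Next comes the heart of the matter, the estimate $\reg_S T_{ij}\leq t_{i-1}^S(R)+t_{j-1}^S(R)+3$. Here I would work through the two defining short exact sequences of the Koszul complex, namely $0\to Z_i\to K_i\to B_{i-1}\to 0$ with $K_i$ free, and $0\to B_i\to Z_i\to H_i\to 0$ with $H_i$ of finite length and top degree $t_i^S(R)$. The first gives $T_{ij}=\ker(Z_i\otimes_R B_{j-1}\to K_i\otimes_R B_{j-1})$, and feeding the second into the standard regularity estimates for a short exact sequence produces recursive bounds for $\reg_S Z_i$ and $\reg_S B_{i-1}$ in terms of the $t_m^S(R)$ and of $\reg_S R$. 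The decisive point is that the strict monotonicity from the previous step collapses these recursions: the telescoping $t_m^S(R)\ge t_{m-k}^S(R)+k$ makes the dominant contribution to $\reg_S B_{i-1}$ equal to $t_{i-1}^S(R)+1$, and likewise for the $B_{j-1}$ factor, while a $\Tor_1$-regularity estimate contributes the final $+1$. I expect this recursion --- controlling $\reg_S$ of Koszul cycles and boundaries while keeping every constant sharp --- to be the principal obstacle; it is precisely the ``long and technically complicated inductive procedure'' alluded to before the statement.

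Finally I would remove the parity restriction and deduce \eqref{intro6b}. The restriction to even $i,j$ is built into the splitting map \eqref{splitmap} through the exterior/divided-power variables of the Tate construction; to reach other parities I would rerun Theorem \ref{rimpo} with coefficients in a syzygy module $M=\Omega^S_k(R)$, using $t_p^S(\Omega^S_k(R))=t_{p+k}^S(R)$ to shift the $j$-slot. With $i,j$ even and $k=1$ the surjection controls $t_{i+(j+1)}^S(R)$ by $t_i^S(R)+t_{j+1}^S(R)$ together with the regularity of a $\Tor_1$-term bounded by $t_{i-1}^S(R)+t_j^S(R)+3$, which is exactly \eqref{intro6} for the pair $(i,j+1)$; symmetry in the two indices then covers every pair with at least one even index. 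The genuinely delicate case is when both indices are odd, where no such clean shift realizes the term $t_a^S(R)+t_b^S(R)$ directly and one must invoke the full inductive procedure of \cite{ACI2}, the increasing property again forcing the many terms it produces to be dominated by the two displayed in \eqref{intro6}. Once \eqref{intro6} is known for all $i,j$, the increment bound \eqref{intro6b} is a one-line corollary: taking $j=1$ (so $t_1^S(R)=2$, $t_0^S(R)=0$) gives $t_{i+1}^S(R)\le\max\{t_i^S(R)+2,\ t_{i-1}^S(R)+3\}$, and $t_{i-1}^S(R)\le t_i^S(R)-1$ forces the first term to win, so $t_{i+1}^S(R)\le t_i^S(R)+2$.
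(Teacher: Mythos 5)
The lecture notes do not actually prove this theorem: it is quoted from \cite{ACI2}, and the text preceding it only sketches the architecture --- the bound $t_{i+j}^S(R)\leq\max\{t_i^S(R)+t_j^S(R),\reg_S T_{ij}\}$ from the splitting map, an unspecified ``long and technically complicated inductive procedure'' to estimate $\reg_S T_{ij}$, and the remark that the Cohen--Macaulay hypothesis makes the $t_i^S(R)$ increase. Your outline reproduces exactly this architecture, and the two steps you do carry out are correct: the strict increase $t_{i-1}^S(R)<t_i^S(R)$ for $1\leq i\leq\projdim_S R$, obtained by dualizing the minimal $S$-free resolution into that of the canonical module $\omega_R$, is a sound and characteristic-free argument, and the deduction of \eqref{intro6b} from \eqref{intro6} with $j=1$ via $t_{i-1}^S(R)\leq t_i^S(R)-1$ is fine.

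There is nevertheless a genuine gap, and it sits precisely where all the difficulty of \cite{ACI2} lives. The estimate $\reg_S T_{ij}\leq t_{i-1}^S(R)+t_{j-1}^S(R)+3$ is asserted to follow from feeding the exact sequences $0\to Z_i\to K_i\to B_{i-1}\to 0$ and $0\to B_i\to Z_i\to H_i\to 0$ into ``standard regularity estimates'' plus ``a $\Tor_1$-regularity estimate contributing the final $+1$,'' but no such standard estimate exists over $R$: the module $T_{ij}=\Tor_1^R(B_{i-1},B_{j-1})$ is a Tor over the singular ring $R$, and the paper's own remark (the example $R=K[x,y]/(x^2+y^2)$ with $\reg_R M=\reg_R N=0$ and $\reg_R\Tor_1^R(M,N)=2$) shows that the formula $\reg\Tor_1(M,N)\leq\reg M+\reg N+1$ fails over Koszul rings, so the step as described is unjustified rather than routine. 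The parity issue is likewise only half-resolved: taking coefficients in a syzygy module plausibly handles one odd index, but you explicitly concede that the case of two odd indices ``must invoke the full inductive procedure of \cite{ACI2}.'' Deferring both the central regularity bound and the hardest parity case to the cited paper leaves an accurate road map of the intended proof, not a proof; that is no less than what these notes themselves provide, but it does not establish the theorem.
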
 

Furthermore one also deduces: 
 \begin{theorem} 
 \label{mainACI2}
If $R$ is a Koszul algebra of characteristic $0$ satisfying the Green-Lazarsfeld $N_p$ condition for some $p>1$ (i.e. $\beta^S_{ij}(R)=0$ for every $i=1,2,\dots,p$ and every $j>i+1$) then  
\begin{equation}
 \label{intro7}
 \reg_S(R)\leq   2\left\lfloor \frac{\projdim_S R}{p+1}\right\rfloor+1    
 \end{equation} 
 where the ``$+1$" can be omitted if $p+1$ divides $\projdim_S R$. 
 \end{theorem}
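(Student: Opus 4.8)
The plan is to package everything into the single sequence $s_i:=t_i^S(R)-i$, for which $\reg_S(R)=\sup_i s_i$ and, since $R$ is quadratic, $s_0=0$. The $N_p$ hypothesis says precisely that $\beta^S_{ij}(R)=0$ for $1\le i\le p$ and $j>i+1$, i.e. $t_i^S(R)\le i+1$, so that $s_i\le 1$ for $1\le i\le p$. The engine is the pair of inequalities of Theorem \ref{mainACI1}: rewriting $t_{i+1}^S(R)\le t_i^S(R)+2$ and $t_{i+j}^S(R)\le\max\{t_i^S(R)+t_j^S(R),\,t_{i-1}^S(R)+t_{j-1}^S(R)+3\}$ in terms of $s$ gives
\begin{equation}
\label{srec}
s_{i+1}\le s_i+1,\qquad s_{i+j}\le\max\{\,s_i+s_j,\ s_{i-1}+s_{j-1}+1\,\}.
\end{equation}
In particular the ``seam'' value satisfies $s_{p+1}\le s_p+1\le 2$.

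Writing $c=\projdim_S(R)$, so that $s_i=-\infty$ for $i>c$, the heart of the proof is the claim that for every $i=q(p+1)+r$ with $0\le r\le p$ one has
\begin{equation}
\label{sclaim}
s_i\le 2q+\min(r,1).
\end{equation}
I would prove this by strong induction on $i$. The block $q=0$ (that is, $0\le i\le p$) is exactly the content of $N_p$ together with $s_0=0$, and the case $i=p+1$ is the seam bound $s_{p+1}\le2$ noted above. For $q\ge1$ with $(q,r)\neq(1,0)$ I decompose $i=(p+1)+m$ where $m=(q-1)(p+1)+r\ge 0$ and apply the second inequality of \eqref{srec} to the pair $(p+1,m)$, obtaining
\[
s_i\le\max\{\,s_{p+1}+s_m,\ s_p+s_{m-1}+1\,\}.
\]
Feeding in $s_{p+1}\le2$, $s_p\le1$ and the inductive estimates for $s_m$ and $s_{m-1}$, a short case check bounds the first term by $2+2(q-1)+\min(r,1)=2q+\min(r,1)$, and the second by $2q+\min(r-1,1)$ when $r\ge1$ (writing $m-1=(q-1)(p+1)+(r-1)$) and by $2q-1$ when $r=0,\ q\ge2$ (writing $m-1=(q-2)(p+1)+p$); in every case the bound is at most $2q+\min(r,1)$, closing the induction.

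Maximizing \eqref{sclaim} over $0\le i\le c$ then yields the theorem: each such $i$ lies in block $q=\lfloor i/(p+1)\rfloor\le\lfloor c/(p+1)\rfloor$, so $s_i\le 2\lfloor c/(p+1)\rfloor+1$ and hence $\reg_S(R)\le 2\lfloor \projdim_S(R)/(p+1)\rfloor+1$; moreover, when $(p+1)\mid c$ the largest block index $\lfloor c/(p+1)\rfloor$ is attained only at $i=c$, where $r=0$ forces $s_c\le 2\lfloor c/(p+1)\rfloor$, so the ``$+1$'' can be dropped. The main obstacle I anticipate is conceptual rather than computational: the step inequality alone would only give linear growth $s_i\lesssim i$, so it is essential that the \emph{subadditivity} half of \eqref{srec} be used to ``reset'' the count at every block boundary, which is exactly what produces the factor $2\lfloor c/(p+1)\rfloor$ instead of something growing with $i$; getting the seam bookkeeping (the $r=0$ versus $r\ge1$ split) to land on precisely $2q+\min(r,1)$ is the delicate point. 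A secondary issue to address is that Theorem \ref{mainACI1} is stated under a Cohen--Macaulay hypothesis, so one must either reduce the general Koszul case to that setting or invoke the versions of \eqref{srec} established in full generality in \cite{ACI2}.
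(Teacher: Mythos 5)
The paper itself contains no proof of Theorem \ref{mainACI2}: it is introduced with ``one also deduces'' and delegated to \cite[Section 5]{ACI2}, so there is no internal argument to compare yours against. On its own terms, your combinatorial engine is correct. The translation to $s_i=t_i^S(R)-i$, the seam bound $s_{p+1}\le 2$, the block decomposition $i=q(p+1)+r$ with the claimed estimate $s_i\le 2q+\min(r,1)$, and both branches of the inductive case check (including the re-indexing $m-1=(q-2)(p+1)+p$ when $r=0$) all work, as does the endgame giving $\reg_S(R)\le 2\lfloor \projdim_S(R)/(p+1)\rfloor+1$ and the sharpening when $p+1$ divides $\projdim_S(R)$. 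This is exactly the kind of bookkeeping by which such regularity bounds are extracted from subadditivity statements.

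The genuine gap is the one you name in your final sentence and then defer: the subadditivity inequality (\ref{intro6}) that powers every inductive step is stated in this paper only for Koszul algebras that are \emph{Cohen-Macaulay} (Theorem \ref{mainACI1}), while Theorem \ref{mainACI2} carries no such hypothesis. This cannot be repaired by a routine reduction: cutting by a regular sequence of general linear forms preserves the $t_i^S$ and the projective dimension, but it only terminates in an Artinian (hence Cohen-Macaulay) quotient when $R$ was Cohen-Macaulay to begin with, so the general Koszul case does not reduce to the CM case. The CM hypothesis is precisely what makes the $t_i^S(R)$ increasing and lets the results of \cite{ACI2} ``take a simpler form''; without it one must control the correction term $\reg_S T_{ij}$ in the Proposition via the inductive procedure of \cite[Section 5]{ACI2}, which is not reproduced here. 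So what you have is a complete and correct proof that inequalities of the shape (\ref{intro6b}) and (\ref{intro6}) imply the bound (\ref{intro7}), but not a self-contained proof of Theorem \ref{mainACI2} from what is established in full generality in these notes. To close it you would need to quote the CM-free versions of those inequalities from \cite{ACI2} explicitly.
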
 
 
 For more general results the reader can consult   \cite[Section 5]{ACI2}. 
 
 \begin{remark} 
 The problem of bounding the regularity of  Tor-modules has been studied in \cite{EHU}.  Let  $S=K[x_1,\dots,x_n]$,  and let   $M$ and $N$ be finitely generated graded $S$-modules. It is proved in \cite{EHU} that  if $\dim \Tor_1^S(M,N)\leq 1$, then for every $i$ one has
\begin{equation} 
\label{EHU}
\reg_S \Tor_i^S(M,N)\leq \reg_S(M)+ \reg_S(N)+i.
\end{equation} 
Unfortunately, the formula \ref{EHU} does not hold if we replace the polynomial ring $S$ with a Koszul ring $R$. 
For example with 
$$R=K[x,y]/(x^2+y^2), \quad    M=R/(\bar{x}) \quad \mbox{  and } N=R/(\bar{y})$$
 one has 
 $$\reg_R M=\reg_R N=0 \quad \mbox{  and } \quad  \Tor_1^R(M,N)=K(-2)$$ so that has regularity 
 $$\reg_R  \Tor_1^R(M,N)=2.$$
Nevertheless variations of \ref{EHU}  (e.g. compute the Tor over $R$ but regularity over $S$ or add a correction term on the left depending on $\reg_S(R)$) might hold in general.  
\end{remark}

The regularity bound in the Theorem \ref{mainACI2} is much weaker than the logarithmic one obtained by Dao, Huneke and Schweig  in \cite[4.8]{DHS}. They showed that an algebra $R$ with monomial quadratic relations and satisfying the property $N_p$ for some $p>1$   has a very low regularity compared with its embedding dimension.  Their result  asserts that for a given $p>1$ there exist  $f_p(x)\in \RR[x]$ and $\alpha_p\in \RR$ with $\alpha_p>1$ (which are explicitly given in the paper) such that 
 \begin{equation}
  \label{DHS}
  \reg_S R\leq \log_{\alpha_p}  f_p(n)
\end{equation}
 holds for every algebra $R$ with quadratic monomial relations such that $R$ has the property $N_p$ and has embedding dimension $n$.

 This type  of  bound cannot hold for Koszul algebras satisfying $N_p$  no matter what $f_p(x)\in \RR[x]$ and $\alpha_p \in (1,\infty)$ are.  
 To show this, it is enough to describe a family of algebras $\{R_{p,m}\}$ with $p,m\in \NN$ and $p>1$  such that:
 \begin{itemize} 
 \item[(1)] the algebra $R_{p,m}$ is Koszul and satisfies the $N_p$-property,
 \item[(2)] given $p$,  the embedding dimension is $R_{p,m}$ is a polynomial function of  $m$ and the regularity of $R_{p,m}$ is linear in $m$. 
 \end{itemize} 
For example, let  $R_{p,m}$  be the $p$-th Veronese subalgebra of a polynomial ring in $pm$ variables. Then $R_{p,m}$   is Koszul, it satisfies the $N_{p}$-property, it has regularity $(p-1)m$,  and has embedding dimension equal to $\binom{pm+p-1}{p}$, see \cite{BCR1}.

\begin{question}
Consider the coordinate ring of the Grassmannian $G(2,n)$.  It is defined by the Pfaffians of degree 2 (the $4\times4$ pfaffians) in a $n\times n$ skew-symmetric  generic matrix.  We know that it is Koszul, it satisfies the $N_2$ condition (by work of Kurano \cite{Ku}),  it has regularity $n-3$ and codimension $\binom{n-2}{2}$.  Hence for this family the codimension is quadratic in the regularity. 
Does there exist  a family like this (Koszul with the $N_2$ property)  such that the codimension is linear in the regularity?
 \end{question} 
 
 \begin{remark}
 \label{tgame}
 If we apply Theorem \ref{rimpo}  in the case $R=S=K[x_1,\dots,x_n]$ with $K$ of characteristic $0$ or large and $M$ any  graded module then we have a surjection 
$$\Tor_1^S(C_{i-1}, Z_j(M))\to  H_{i+j}(\mm_S,M) $$
because  $H_i  =0$. Here we have set for simplicity $Z_j(M)=Z_j(\mm_S, M)$.
Since 
$$\Tor_1^S(C_{i-1},Z_i(M))=H_i(\mm_S, Z_j(M))$$
 we obtain 
\begin{equation}
\label{surprise} 
\beta^S_{i,v}(Z_j(M))\geq \beta^S_{i+j,v}(M)
\end{equation} 
for all $i,j,v$ and every $M$. 
 \end{remark} 

 % The following general question, not assuming $R$ is Koszul, is suggested by  the general philosophy that  ``beginning syzygies have most of the regularity in them'', see \cite{DHS}. 
 % \begin{question} Can one bound $t_3^S(R)$ in terms of $t_2^S(R)$ only? More precisely, does there exist a standard graded ring $R$ such that $t_3^S(R)\geq  2t_2^S(R)$? 

%For $d\in \NN$ consider the following family.  Set $S=K[x_1,x_2,y_1,y_2,z_1,z_2]$ and     let  $$J=(x_1z_1^d, x_2z_2^d, y_1^{d+1}, y_2^{d+1})$$ and $F=(x_1+y_1)(x_2+y_2)$.  Set $I=J+(F)$ and $R=S/I$. One has  $$t_1^S(R)=d+1, \ \  t_2^S(R)=2d+2, \ \  t_3^S(R)=4d+2.$$  Hence, for this family $$\lim_{d\to \infty} \ \frac{t_3^S(R)}{t_2^S(R)}=2$$ from below.    \end{question} 
  
  We conclude by presenting a short proof of the inequality $t_i^S(R)\leq 2i$ for Koszul algebras and a related question. 
  
  \begin{lemma} 
  \label{soeasy}
  Let $R$ be a Koszul algebra and $Z_i=Z_i(\mm_R)$ the cycles of the Koszul complex $K(\mm_R,R)$ associated with the maximal homogeneous ideal of $R$.  Then $\reg_R(Z_i)\leq 2i$ for every $i$. In particular, $t_i^S(R)\leq 2i$ for every $i$. 
  \end{lemma}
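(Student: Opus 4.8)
The plan is to prove $\reg_R(Z_i)\le 2i$ by induction on $i$, resting on two preliminary observations about the Koszul homology $H_i=H_i(\mm_R)$. First, since $K(\mm_R)=K(\mm_S,S)\otimes_S R$ and $K(\mm_S,S)$ resolves $K$ over $S$, one has $H_i\cong \Tor_i^S(R,K)$, which is exactly the content of formula (\ref{intro3b}). Second, because $\mm_R H_i=0$, the module $H_i$ is a direct sum of shifted copies of $K$; and since $R$ is Koszul we have $\reg_R(K)=0$, hence $\reg_R(K(-d))=d$, so $\reg_R(H_i)=\max\{d:(H_i)_d\ne 0\}=t_i^S(R)$. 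This is the one spot where Koszulness is used, and it is essential: it is what makes $\reg_R(H_i)$ finite and equal to a degree of $H_i$.

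Before running the induction I would record the link that produces the final clause of the statement. As $H_i$ is a quotient of $Z_i$ and is a $K$-vector space (so every nonzero homogeneous element is a minimal generator), one gets $t_i^S(R)=t_0^R(H_i)\le t_0^R(Z_i)\le \reg_R(Z_i)$. Thus once $\reg_R(Z_i)\le 2i$ is established, the inequality $t_i^S(R)\le 2i$ follows immediately.

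For the induction I would use the two short exact sequences attached to $K(\mm_R)$, namely $0\to Z_i\to K_i\to B_{i-1}\to 0$ and $0\to B_{i-1}\to Z_{i-1}\to H_{i-1}\to 0$, together with the standard submodule estimate $\reg_R(A)\le \max\{\reg_R(B),\reg_R(C)+1\}$ for a short exact sequence $0\to A\to B\to C\to 0$, which follows from the long exact sequence in $\Tor_\bullet^R(-,K)$ and holds over any graded ring. The base case is $Z_0=R$, with $\reg_R(R)=0$. For the step, assume $\reg_R(Z_{i-1})\le 2(i-1)$. Using the second sequence and $\reg_R(H_{i-1})=t_{i-1}^S(R)\le \reg_R(Z_{i-1})$ gives $\reg_R(B_{i-1})\le \max\{\reg_R(Z_{i-1}),\reg_R(H_{i-1})+1\}\le \reg_R(Z_{i-1})+1\le 2i-1$. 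Since $K_i$ is free with all generators in degree $i$, so $\reg_R(K_i)=i$, the first sequence yields $\reg_R(Z_i)\le \max\{i,\reg_R(B_{i-1})+1\}\le \max\{i,2i\}=2i$, completing the induction.

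The point to watch — and the main obstacle — is the apparent circularity between $\reg_R(Z_i)$ and $t_i^S(R)=\reg_R(H_i)$, since the homology feeds back into the bound for the cycles. This is resolved by noting that the inequality $t_i^S(R)\le \reg_R(Z_i)$ holds unconditionally, and that in the inductive step only $H_{i-1}$ (one index lower) appears, so the induction hypothesis on $Z_{i-1}$ already controls it. The remaining ingredients are routine: the regularity inequalities are valid over the non-regular ring $R$ because they use only the $\Tor^R$ long exact sequence, and the identity $\reg_R(H_i)=t_i^S(R)$ reduces to $\reg_R(K)=0$.
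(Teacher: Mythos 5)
Your proof is correct and follows essentially the same route as the paper: induction on $i$ via the two short exact sequences $0\to Z_i\to K_i\to B_{i-1}\to 0$ and $0\to B_{i-1}\to Z_{i-1}\to H_{i-1}\to 0$, with the Koszul hypothesis entering exactly where you place it, namely in the estimate $\reg_R(H_{i-1})=t_0^R(H_{i-1})\le \reg_R(Z_{i-1})$ coming from $\mm_R H_{i-1}=0$. The only cosmetic difference is that the paper records $\reg_R(Z_i)=\reg_R(B_{i-1})+1$ as an equality from the first sequence, whereas you keep the extra term $\reg_R(K_i)=i$ in the maximum, which is harmless since $i\le 2i$.
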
 
  \begin{proof} 
  For $i>0$ we have short exact sequences: 
  $$0\to Z_{i}\to K_{i}\to B_{i-1} \to 0$$
  $$0\to B_{i-1}\to Z_{i-1}\to H_{i-1} \to 0.$$
  Hence one has: 
  $$\reg_R(Z_{i})=\reg_R(B_{i-1})+1$$
  and 
  $$\reg_R(B_{i-1})\leq \max\{ \reg_R(Z_{i-1}),  \reg_R(H_{i-1})+1\}.$$
  Hence 
  $$\reg_R(Z_{i})\leq  \max\{ \reg_R(Z_{i-1})+1,  \reg_R(H_{i-1})+2\}.$$
  Since $\mm_RH_{i-1}=0$ and $R$ is Koszul we have 
  $$\reg_R(H_{i-1})=t_0^R(H_{i-1})\leq t^R_0(Z_{i-1})\leq \reg_R(Z_{i-1}).$$
  It follows that 
$$\reg_R(Z_{i})\leq  \max\{ \reg_R(Z_{i-1})+1,  \reg_R(Z_{i-1})+2\}=\reg_R(Z_{i-1})+2.$$
Since $Z_0=R$ one has $\reg_R(Z_0)=0$ and it  follows by induction that 
$$\reg_R(Z_i)\leq 2i.$$
Since 
$$t_i^S(R)=t_0^R(H_i)\leq t_0^R(Z_i)\leq \reg_R(Z_i)\leq 2i$$
we may conclude that 
$$t_i^S(R)\leq 2i.$$
\end{proof} 

With the assumptions and notation of \ref{soeasy} one can ask: 
\begin{question} 
  \label{subadd2} Does the inequality 
   $$\reg_R(Z_{i+j})\leq \reg_R(Z_{i})+ \reg_R(Z_{j})$$ hold for every $i,j$?     
   \end{question} 
   
   For similar questions and results the reader can consult \cite{CM}.

 \section{Veronese rings and algebras associated with families of hyperspaces} 
In the third lecture we present two case studies: the Koszul properties of Veronese rings and of algebras associated with families of hyperspaces. 
The material we present is taken from \cite{ABH,BF,B1,BaM,Ca,CC,CHTV, CTV, CRV,ERT}. 

\subsection{Veronese rings}  
We will use the following results whose proofs can be found, for example, in the survey paper  \cite{CDR}. 

\begin{lemma}
\label{blabla}
Let $R$ be a standard graded $K$-algebra. 
Let $$\MM: \cdots \to M_i\to\cdots\to M_2\to M_1\to M_0\to 0$$ be a complex  of $R$-modules. Set $H_i=H_i(\MM)$. Then for every $i\geq 0$ one has
 $$t_i^R(H_0)\leq \max\{\alpha_i,\beta_i\}$$ where 
 $\alpha_i=\max\{ t_j^R(M_{i-j}) : j=0,\dots, i\}$
 and  $\beta_i=\max\{ t_j^R(H_{i-j-1}) : j=0,\dots,i-2\}$. 
 
\noindent Moreover one has  
$$\reg_R(H_0)\leq \max\{\alpha,\beta\}$$
 where $\alpha=\sup\{ \reg_R (M_j)-j : j\geq 0\}$ and $\beta=\sup\{ \reg_R (H_j)-(j+1) : j\geq 1\}$.
 \end{lemma}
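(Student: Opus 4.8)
The plan is to reduce everything to the long exact sequences in $\Tor^R_*(-,K)$ coming from the standard filtration of the complex $\MM$ by cycles and boundaries, to establish the bound on the $t_i^R(H_0)$ first, and then to deduce the regularity statement formally. Write $Z_k=\Ker(M_k\to M_{k-1})$, $B_k=\Image(M_{k+1}\to M_k)$ and $H_k=Z_k/B_k$; since $R$ is Noetherian and the $M_k$ are finitely generated graded with degree-$0$ differentials, all of these are finitely generated graded modules. For every $k$ one has the two short exact sequences
$$0\to Z_k\to M_k\to B_{k-1}\to 0, \qquad 0\to B_k\to Z_k\to H_k\to 0,$$
and in particular, since $Z_0=M_0$, the sequence $0\to B_0\to M_0\to H_0\to 0$ exhibits $H_0$ as the cokernel of $M_1\to M_0$.

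First I would record the consequences of the long exact sequence of $\Tor^R_*(-,K)$ attached to a short exact sequence $0\to A\to B\to C\to 0$: in particular $t_m^R(B)\le\max\{t_m^R(A),t_m^R(C)\}$ and $t_m^R(C)\le\max\{t_m^R(B),t_{m-1}^R(A)\}$, with the convention $t_m^R(-)=-\infty$ for $m<0$. Applying the second of these to $0\to B_0\to M_0\to H_0\to 0$ gives $t_i^R(H_0)\le\max\{t_i^R(M_0),\,t_{i-1}^R(B_0)\}$. Applying the same inequality to $0\to Z_k\to M_k\to B_{k-1}\to 0$ and the first inequality to $0\to B_k\to Z_k\to H_k\to 0$ yields the key recursion
$$t_m^R(B_{k-1})\le\max\{t_m^R(M_k),\,t_{m-1}^R(B_k),\,t_{m-1}^R(H_k)\}$$
valid for all $k\ge 1$ and all $m$.

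Next I would iterate this recursion starting from $t_{i-1}^R(B_0)$. At the $k$-th step the surviving boundary term is $t_{i-1-k}^R(B_k)$, which contributes an $M$-term $t_{i-1-k}^R(M_{k+1})$ and an $H$-term $t_{i-2-k}^R(H_{k+1})$; because $t_m^R$ vanishes for $m<0$, the iteration terminates after finitely many steps once the homological index drops below $0$. Collecting and reindexing should produce exactly
$$t_i^R(H_0)\le\max\Big\{\max_{0\le j\le i}t_j^R(M_{i-j}),\ \max_{0\le j\le i-2}t_j^R(H_{i-j-1})\Big\}=\max\{\alpha_i,\beta_i\}.$$
The only real work here is bookkeeping: checking that the $M$- and $H$-indices line up with the ranges defining $\alpha_i$ and $\beta_i$ and that nothing is dropped or double-counted. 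Isolating the correct three-term recursion and verifying termination is the conceptual crux, and the reindexing is where I expect the bulk of the routine but error-prone effort to go.

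Finally, the regularity statement follows formally from the bound just proved. For each fixed $i$ one has $t_i^R(H_0)-i\le\max\{\alpha_i-i,\,\beta_i-i\}$. Writing a typical term of $\alpha_i$ as $t_j^R(M_{i-j})$ and using $i=j+(i-j)$ shows $t_j^R(M_{i-j})-i\le\reg_R(M_{i-j})-(i-j)\le\alpha$, so $\alpha_i-i\le\alpha$; likewise a term $t_j^R(H_{i-j-1})$ of $\beta_i$ satisfies $t_j^R(H_{i-j-1})-i\le\reg_R(H_{i-j-1})-(i-j)\le\beta$, noting that $i-j-1\ge 1$ on this range so the relevant homological index lies in the domain defining $\beta$; hence $\beta_i-i\le\beta$. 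Therefore $t_i^R(H_0)-i\le\max\{\alpha,\beta\}$ for every $i$, and taking the supremum over $i$ gives $\reg_R(H_0)\le\max\{\alpha,\beta\}$.
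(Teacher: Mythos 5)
Your proof is correct and is essentially the standard argument (the paper defers the proof to [CDR], where it is carried out in the same way): split the complex into the short exact sequences $0\to Z_k\to M_k\to B_{k-1}\to 0$ and $0\to B_k\to Z_k\to H_k\to 0$, chase the long exact sequences in $\Tor^R(-,K)$ to get the three-term recursion for $t_m^R(B_{k-1})$, and iterate until the homological index drops below zero. The bookkeeping you describe does close up exactly to $\alpha_i$ and $\beta_i$, and the passage to the regularity bound via $t_i^R(H_0)-i$ is the intended formal deduction.
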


\begin{theorem} 
\label{Giux1}
Let $A$ be a standard graded $K$-algebra,  $J\subset A$ an homogeneous ideal and $B=A/J$.  Then: 
\begin{itemize} 
\item[(1)] If  $\reg_A(B)\leq 1$ and $A$ is Koszul, then $B$ is Koszul. 
\item[(2)] If  $\reg_A(B)$ is finite  and $B$ is Koszul, then $A$ is Koszul. 
\end{itemize} 
\end{theorem}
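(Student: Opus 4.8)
The plan is to derive both implications from Lemma~\ref{blabla} by feeding it the two natural complexes produced by changing the base ring, one complex for each direction.

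For (2), suppose $B$ is Koszul and $\reg_A(B)<\infty$, and let $\mathbf G:\ \cdots\to G_1\to G_0\to 0$ be the minimal graded $B$-free resolution of $K$. Since $B$ is Koszul this resolution is linear, so $G_j\cong B(-j)^{\beta_j^B(K)}$. Viewing $\mathbf G$ as a complex of $A$-modules via $A\to B$, its homology is $H_0=K$ and $H_j=0$ for $j\ge 1$. I would apply the regularity form of Lemma~\ref{blabla} over $R=A$: because the $H_j$ vanish for $j\ge1$ the term $\beta$ is $-\infty$, so $\reg_A(K)\le \sup_j\{\reg_A(G_j)-j\}$. As $\reg_A(G_j)=j+\reg_A(B)$, this gives $\reg_A(K)\le \reg_A(B)<\infty$, and then $A$ is Koszul by the Avramov--Peeva half of Theorem~\ref{AEP}. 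Here the finiteness of $\reg_A(B)$ and the linearity forced by ``$B$ Koszul'' are exactly what is consumed.

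For (1), suppose $A$ is Koszul and $\reg_A(B)\le 1$, and let $\mathbf F:\ \cdots\to F_1\to F_0\to 0$ be the (linear) minimal $A$-free resolution of $K$, so $F_i\cong A(-i)^{\beta_i^A(K)}$. I would apply $-\otimes_A B$ to obtain a complex $\mathbf M=\mathbf F\otimes_A B$ of free $B$-modules with $M_i\cong B(-i)^{\beta_i^A(K)}$, whose homology is $H_0=K$ and $H_j=\Tor_j^A(B,K)$ for $j\ge 1$. The goal is $\reg_B(K)=0$, i.e. $t_i^B(K)\le i$ for all $i$, and I would extract this from the graded (pointwise) form of Lemma~\ref{blabla} over $R=B$. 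The free part is harmless: since each $M_i$ is generated in degree $i$ one finds $\alpha_i=i$, and this is exactly where ``$A$ Koszul'' enters.

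The main obstacle is the term $\beta_i$, which involves $t_j^B(H_{i-j-1})$, and here a direct regularity estimate is circular: each $H_m=\Tor_m^A(B,K)$ is a $K$-vector space, hence a direct sum of shifts $K(-a)$, and $\reg_B(K(-a))=a+\reg_B(K)$ reintroduces the very quantity we are trying to bound. The way around this is to run an induction on homological degree using the pointwise statement of Lemma~\ref{blabla} rather than the regularity statement. The hypothesis $\reg_A(B)\le1$ confines $H_m$ to internal degrees $a\le m+1$, so $t_j^B(H_{i-j-1})\le (i-j)+t_j^B(K)$; feeding in the inductive bound $t_j^B(K)\le j$ for $j\le i-2$ yields $(i-j)+j=i$, whence $\beta_i\le i$. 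Combining with $\alpha_i=i$ gives $t_i^B(K)\le i$, closing the induction (the cases $i=0,1$ being immediate), so $\reg_B(K)=0$ and $B$ is Koszul. The delicate point is precisely that the bookkeeping closes at exactly $i$: it is the combination of ``$A$ Koszul'' (giving $\alpha_i=i$) and ``$\reg_A(B)\le 1$'' (giving the $m+1$ ceiling on the internal degrees of the $H_m$) that makes the induction balance, and weakening either hypothesis breaks it.
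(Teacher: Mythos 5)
Your argument is correct, and it is essentially the intended one: the paper itself states Theorem~\ref{Giux1} without proof (deferring to \cite{CDR}), but it supplies Lemma~\ref{blabla} precisely for this purpose, and your two applications of it --- the regularity form to the $B$-free resolution of $K$ viewed over $A$ for (2), and the pointwise form plus induction on $i$ applied to $\mathbf F\otimes_A B$ for (1), using $\reg_A(B)\le 1$ to cap the internal degrees of $\Tor_m^A(B,K)$ at $m+1$ --- reproduce the standard proof. No gaps.
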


 We apply now Theorem \ref{Giux1} to prove that the Veronese subrings of a Koszul algebra are Koszul.  
 
 Let  $R$ be a standard graded $K$-algebra. Let $c\in \NN$ and 
 $$R^{(c)}=\oplus_{j\in \ZZ}  R_{jc}$$ be  the $c$-th Veronese subalgebra of $R$.  Similarly, given a graded $R$-module $M$  one defines 
 
 $$M^{(c)}=\oplus_{j\in \ZZ}  M_{jc}.$$ 
 
 The formation of the $c$-th Veronese submodule can be seen as an exact functor from the category of  graded $R$-modules and maps of degree $0$   to  the category of graded $R^{(c)}$-modules and maps of degree $0$.  For $u=0,\dots, c-1$ consider the Veronese submodules 
 $$V_u=\oplus_{j\in \ZZ}  R_{jc+u}$$ of $R$.  Note that $V_u$ is an $R^{(c)}$-module generated in degree $0$ and that for $a\in \ZZ$ one has 
 $$R(-a)^{(c)}=V_u(-\lceil a/c \rceil )$$  where $u=0$ if $a\equiv 0$  mod$(c)$ and $u=c-r$ if  $a\equiv r$  mod$(c)$ and $0<r<c$. 
  
 \begin{theorem}  
 \label{cc1}
 Let $R$ be a Koszul algebra.  Then for every $c\in \NN$ one has: 
 \begin{itemize} 
 \item[(1)] $R^{(c)}$ is Koszul.
 \item[(2)]  $\reg_{R^{(c)}}(V_u)=0$ for every $u=0,\dots, c-1$, i.e. the Veronese submodules of $R$ have a linear resolution over $R^{(c)}$. 
 \end{itemize} 
 \end{theorem}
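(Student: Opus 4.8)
The plan is to derive both assertions by applying the Veronese functor to \emph{linear} $R$-free resolutions and feeding the resulting complexes of $R^{(c)}$-modules into Lemma~\ref{blabla}. The starting observation is that, since $R$ is Koszul, the minimal free resolution $F_\bullet\to K$ is linear, so $F_i=R(-i)^{\beta_i}$; applying the exact functor $(-)^{(c)}$ and using the identity $R(-a)^{(c)}=V_u(-\lceil a/c\rceil)$ yields an exact complex $G_\bullet\to K$ of $R^{(c)}$-modules in which each $G_i$ is a direct sum of copies of a single shifted Veronese module $V_{u_i}(-\lceil i/c\rceil)$. Here $u_i=0$ when $c\mid i$, so the terms $G_{qc}$ are free over $R^{(c)}$, while $u_i=c-r$ when $i\equiv r\not\equiv 0 \pmod c$.

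First I would show that $(2)$ implies $(1)$. Since $G_\bullet$ is a resolution of $K$, its higher homology vanishes, so Lemma~\ref{blabla} gives $\reg_{R^{(c)}}(K)\le\sup_i\{\reg_{R^{(c)}}(G_i)-i\}$. Granting $(2)$, i.e.\ $\reg_{R^{(c)}}(V_u)=0$, one has $\reg_{R^{(c)}}(G_i)=\lceil i/c\rceil$, and since $\lceil i/c\rceil\le i$ for all $i\ge 0$ with equality at $i=0$, the supremum is $0$. As $K$ is generated in degree $0$ over $R^{(c)}$ this forces $\reg_{R^{(c)}}(K)=0$, which is exactly the Koszul property (equivalently, apply Theorem~\ref{AEP}).

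The crux is therefore $(2)$: that each Veronese module $V_u$ satisfies $\reg_{R^{(c)}}(V_u)=0$. I would build exact complexes resolving the $V_u$ out of the multiplication maps. For $1\le u\le c-1$ the surjection $R_1\otimes_K V_{u-1}\to V_u$ has kernel $W_u$ whose degree-$j$ part is $\ker(R_1\otimes_K R_{jc+u-1}\to R_{jc+u})$; comparing with the linear resolution of $K$ identifies this kernel with the Veronese strand of the second syzygy module, namely $W_u\cong\Omega(u)^{(c)}$ with $\Omega=\Omega^2_R(K)$. Because $R$ is Koszul, $\Omega$ again has a linear $R$-resolution, so the same Veronese-ization produces an exact complex with shifted-Veronese terms resolving $W_u$. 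Splicing the short exact sequences $0\to W_u\to V_{u-1}^{\oplus n}\to V_u\to 0$ (with $n=\dim_K R_1$) together with the Veronese-ized resolutions of the $W_u$ expresses the linearity of each $V_u$ recursively in terms of the linearity of lower strands.

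I expect the main obstacle to be the apparent circularity: the bound for $\reg_{R^{(c)}}(V_u)$ produced by Lemma~\ref{blabla} feeds back the very numbers $\reg_{R^{(c)}}(V_{u'})$ one is trying to control, and a priori one does not even know these regularities are finite, since $R^{(c)}$ is not yet known to be Koszul. To break this I would use the sharper, degree-by-degree form of Lemma~\ref{blabla}, namely $t_i^{R^{(c)}}(H_0)\le\max\{\alpha_i,\beta_i\}$, and run a single induction on the homological degree $i$, proving simultaneously for $K$ and for all the $V_u$ that $\beta^{R^{(c)}}_{i'j}=0$ whenever $j>i'$ and $i'\le i$. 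The point is that $\alpha_i$ involves only the terms at homological levels $\le i$, so that the shifts $\lceil(i-j)/c\rceil$ coming from the identity $R(-a)^{(c)}=V_u(-\lceil a/c\rceil)$ keep every relevant Betti number on the diagonal; verifying that these ceiling shifts never push a syzygy off the linear strand is the combinatorial heart of the argument. Once linearity holds in all homological degrees, both $\reg_{R^{(c)}}(V_u)=0$ and $\reg_{R^{(c)}}(K)=0$ follow, yielding $(2)$ and $(1)$.
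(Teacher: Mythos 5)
Your argument is correct in outline and rests on the same machinery as the paper's proof: the exactness of $(-)^{(c)}$, the identity $R(-a)^{(c)}=V_u(-\lceil a/c\rceil)$, the homological-degree-by-degree form of Lemma~\ref{blabla}, and a simultaneous induction on homological degree over all the $V_u$ to break the circularity you correctly identify; your deduction of (1) from (2) is exactly the paper's. Where you genuinely diverge is in the setup for (2): you resolve $V_u$ recursively via the short exact sequences $0\to W_u\to V_{u-1}^{\oplus n}\to V_u\to 0$ with $W_u\cong(\Omega^2_R(K)(u))^{(c)}$ (an identification that checks out degreewise), splicing in the Veronese-ization of the tail of the resolution of $K$. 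The paper instead uses the single observation that $V_u=(\mm_R^u(u))^{(c)}$ and applies $(-)^{(c)}$ directly to the linear resolution of $\mm_R^u(u)$, whose linearity follows from $\reg_R\mm_R^u=u$, proved by induction on $u$ from the Koszul hypothesis. That choice avoids the splicing and makes the ``combinatorial heart'' you allude to completely transparent: with $G_{i-j}$ a direct sum of copies of $V_{u'}(-\lceil(i-j)/c\rceil)$, the quantity $\alpha_i$ of Lemma~\ref{blabla} is bounded by $t_j^{R^{(c)}}(V_{u'})+\lceil(i-j)/c\rceil\le j+(i-j)=i$ by the inductive hypothesis, using only $\lceil m/c\rceil\le m$ for $m\ge 0$. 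Your route trades the auxiliary fact about $\reg_R\mm_R^u$ for a more delicate bookkeeping of the shifts $\lceil(i+2-u)/c\rceil$ through the spliced complex; both work, but that verification is the one step you leave unexecuted and should be written out, since it is the only place where the argument could slip.
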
 

 \begin{proof} Denote by  $A$ the ring $R^{(c)}$.  We  first prove assertion (2). Indeed we prove by induction on $i$ that $t_i^{A}(V_u)\leq i$ for every $i$. There is nothing to prove in the  case $i=0$. For $i>0$, observe that, since $R$ is Koszul,  one has $\reg_R \mm=1$ and by induction one has  that   $\reg_R \mm^u=u$. Now  let $M=\mm_R^u(u)$ so that  $\reg_R(M)=0$ and $M^{(c)}=V_u$. Consider the minimal free resolution $\FF$ of $M$ over $R$ and apply the functor $-^{(c)}$. We get a complex $\GG=\FF^{(c)}$ of $A$-modules such that $H_0(\GG)=V_u$,  $H_j(\GG)=0$ for $j>0$ and $G_j=F_j^{(c)}$ is a direct sum of copies of $R(-j)^{(c)}$.  Applying \ref{blabla} and the inductive assumption we get $t^A_i(V_u)\leq i$ as required. 
 
 To prove that $A$ is Koszul we consider the minimal free resolution $\FF$ of $K$ over $R$  and apply $-^{(c)}$. We get a complex $\GG=\FF^{(c)}$ of $A$-modules such that $H_0(\GG)=K$, $H_j(\GG)=0$ for $j>0$ and $G_j=F_j^{(c)}$ is a direct sum of copies of  $V_u(-\lceil j/c \rceil )$. Hence $\reg_A ( \GG_j)=\lceil j/c \rceil$ and applying \ref{blabla} we obtain 
 $$\reg_A(K)\leq \sup \{ \lceil j/c \rceil-j : j\geq 0\}=0.$$ 
  \end{proof}
 
 We also have: 
 
 \begin{theorem} 
 \label{cc2}
 Let $R$ be a standard graded $K$-algebra.  Then: 
 \begin{itemize}
 \item[(1)]   The Veronese subalgebra $R^{(c)}$ is Koszul for $c\gg 0.$ 
 \item[(2)] If $R=S/I$ with $S=K[x_1,\dots,x_n]$, then $R^{(c)}$ is Koszul for every $c$ such that 
 $$c\geq \max\{ t_i^S(R)/(1+i)  : i\geq 0\}.$$ 
 \end{itemize} 
  \end{theorem}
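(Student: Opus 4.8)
The plan is to establish (2) first and then obtain (1) as an immediate consequence, since the quantity $\max\{t_i^S(R)/(1+i):i\ge 0\}$ is finite: the minimal $S$-free resolution of $R$ has length at most $n$, so only finitely many $t_i^S(R)$ are non-negative and each is finite. So I would fix the canonical presentation $R=S/I$ and set $A=S^{(c)}$ and $B=R^{(c)}$. The conceptual heart of the argument is that, although $R$ itself may fail to be Koszul, the polynomial ring $S$ is Koszul; and the exact functor $-^{(c)}$ turns the surjection $S\to R$ into a surjection $A=S^{(c)}\to R^{(c)}=B$ of standard graded $K$-algebras (both $A$ and $B$ are generated in degree $1$). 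Thus $B=A/J$ with $A$ Koszul, and by Theorem \ref{cc1} applied to $S$ we already know $A=S^{(c)}$ is Koszul and that every Veronese submodule $V_u=\oplus_j S_{jc+u}$ of $S$ satisfies $\reg_A(V_u)=0$. In view of Theorem \ref{Giux1}(1), it then suffices to prove that $\reg_A(B)\le 1$ under the stated hypothesis on $c$.

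To estimate $\reg_A(B)$, I would take the minimal graded free resolution $\FF$ of $R$ over $S$ and apply $-^{(c)}$. Since this functor is exact, we obtain a complex $\GG=\FF^{(c)}$ of $A$-modules with $H_0(\GG)=R^{(c)}=B$ and $H_j(\GG)=0$ for $j>0$, in which each $G_j=F_j^{(c)}$ is a direct sum of modules of the form $S(-a)^{(c)}$ with $a\le t_j^S(R)$. Using the identity $S(-a)^{(c)}=V_u(-\lceil a/c\rceil)$ recorded before the statement, together with $\reg_A(V_u)=0$, one computes
$$\reg_A\!\big(S(-a)^{(c)}\big)=\lceil a/c\rceil,\qquad\text{hence}\qquad \reg_A(G_j)\le \lceil t_j^S(R)/c\rceil .$$

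Finally I would invoke the regularity estimate of Lemma \ref{blabla} for the complex $\GG$. Because $\GG$ is an $A$-free resolution of $B$, all higher homology vanishes, so the term $\beta$ in that lemma contributes nothing and we are left with
$$\reg_A(B)\le \sup\{\reg_A(G_j)-j : j\ge 0\}\le \sup\{\lceil t_j^S(R)/c\rceil-j : j\ge 0\}.$$
The hypothesis $c\ge \max\{t_i^S(R)/(1+i):i\ge 0\}$ is precisely what forces each summand to be at most $1$, since $\lceil t_j^S(R)/c\rceil\le j+1$ holds exactly when $t_j^S(R)\le c(j+1)$, i.e. when $c\ge t_j^S(R)/(j+1)$. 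Thus $\reg_A(B)\le 1$, and Theorem \ref{Giux1}(1) yields that $B=R^{(c)}$ is Koszul, proving (2); assertion (1) then follows because the supremum defining the bound is finite, so it is met for all large $c$. I expect the only genuine obstacle to be the passage from $R$ to the Koszul ambient ring $S^{(c)}$ and the resulting control of $\reg_A\big(S(-a)^{(c)}\big)$ via the linear-resolution conclusion of Theorem \ref{cc1}; once that is in place, the regularity bookkeeping through Lemma \ref{blabla} is routine.
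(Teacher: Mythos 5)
Your proposal is correct and follows essentially the same route as the paper: apply $-^{(c)}$ to the minimal $S$-free resolution of $R$, use Theorem \ref{cc1} to get $\reg_{S^{(c)}}(G_j)\le\lceil t_j^S(R)/c\rceil$, feed this into Lemma \ref{blabla}, and conclude via Theorem \ref{Giux1}(1). One cosmetic slip: $\GG$ is not an $A$-\emph{free} resolution (the $G_j$ are sums of shifted Veronese submodules $V_u$, not free $A$-modules), but you only use its acyclicity, which you had already established, so the argument stands.
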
 
  \begin{proof} Let $\FF$ be the minimal free resolution of $R$ as a $S$-module.  Set $B=S^{(c)}$ and note that $B$ is Koszul because of \ref{cc1}. Then $\GG=\FF^{(c)}$ is a complex of $B$-modules such that $H_0(\GG)=R^{(c)}$,  $H_j(\GG)=0$ for $j>0$. Furthermore $G_i=F_i^{(c)}$ is a direct sum of shifted copies of the Veronese submodules $V_u$. Using \ref{cc1} we get  the bound $\reg_B(G_i)\leq \lceil t_i^A(R)/c \rceil$. Applying \ref{blabla} we get 
  $$\reg_B (R^{(c)})\leq \max \{ \lceil t_i^S(R)/c \rceil -i : i\geq 0\}.$$
  Hence for $c\geq \max\{ t_i^S(R)/(1+i)  : i\geq 0\}$ one has $\reg_B (R^{(c)})\leq 1$ and we conclude from \ref{Giux1} that $R^{(c)}$ is Koszul. 
  \end{proof}

  \begin{remark} 
 (1) In \cite[2]{ERT} it is proved that if $c\geq (\reg_S(R)+1)/2$, then $R^{(c)}$ is even G-quadratic. See \cite{Sh} for other interesting results in this direction. 

(2) Backelin proved in \cite{B1} that $R^{(c)}$ is Koszul if $c\geq \Rate(R)$.  Here $\Rate(R)$ is defined as 
$$\sup_{i>0} \{  (t_{i+1}^R(K)-1)/i\} $$  and it is finite. It  measures the deviation from the Koszul property in the sense that $\Rate(R)\geq 1$ with equality if and only if $R$ is Koszul. 
  \end{remark}

\subsection{Strongly Koszul algebras}

A powerful  tool for proving that an algebra is Koszul is a  typical ``divide and conquer" strategy that can be formulated in the following way, see \cite{CTV} and \cite{CRV}: 

\begin{definition} A Koszul filtration of a $K$-algebra $R$ is a set $\F$ of ideals of $R$ such that: 
\begin{itemize} 
\item[(1)]  Every ideal $I\in \F$ is generated by elements of degree $1$.
\item[(2)] The zero ideal $0$ and the maximal ideal $\mm_R$ are in $\F$. 
\item[(3)] For every $I\in \F$, $I\neq 0$, there exists $J\in \F$ such that $J\subset I$,  $I/J$ is cyclic and $\Ann(I/J)=J:I\in \F$. 
\end{itemize}
\end{definition} 
 
One easily proves: 

\begin{lemma} 
\label{filtflag} Let $\F$ be a Koszul filtration of a standard graded $K$-algebra $R$. Then one has: 
\begin{itemize} 
\item[(1)]  $\reg_R (R/I)=0$ and $R/I$ is Koszul for every $I\in \F$.   
\item[(2)]  $R$ is Koszul. 
\end{itemize}
\end{lemma}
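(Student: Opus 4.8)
The plan is to reduce both assertions to the single claim that $\reg_R(R/I)=0$ for every $I\in\F$, and then to establish that claim by induction. Granting it, part (2) is immediate: since $\mm_R\in\F$ by axiom (2), we get $\reg_R(K)=\reg_R(R/\mm_R)=0$, which is precisely the statement that $R$ is Koszul. Part (1) then follows from Theorem \ref{Giux1}(1): for each $I\in\F$ we have $\reg_R(R/I)=0\le 1$ and $R$ is Koszul, hence $R/I$ is Koszul, while the equality $\reg_R(R/I)=0$ is already the first half of (1). So everything rests on the regularity claim.

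To prove it I would show the equivalent statement $t_i^R(R/I)\le i$ for all $i\ge 0$ and all $I\in\F$ (the reverse inequality $\reg_R(R/I)\ge 0$ is clear, as $R/I\neq 0$ is generated in degree $0$). The engine is axiom (3): for $I\neq 0$ it produces $J\in\F$ with $J\subsetneq I$, with $I/J$ cyclic, and with $J:I=\Ann(I/J)\in\F$. Since $I$ is generated in degree $1$ and $I/J$ is cyclic, its generator is the class of a single linear form $x\in I_1\setminus J_1$, so $I=J+Rx$ and the surjection $R\to I/J,\ r\mapsto r\bar x$, has kernel $J:x=J:I$; this identifies $I/J\cong (R/(J:I))(-1)$. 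Feeding the short exact sequence
\[
0\to (R/(J:I))(-1)\to R/J\to R/I\to 0
\]
into the long exact sequence of $\Tor^R_\bullet(-,K)$ yields, in each internal degree $j$, the three-term sequence
\[
\Tor_i^R(R/J,K)_j\to \Tor_i^R(R/I,K)_j\to \Tor_{i-1}^R(R/(J:I),K)_{j-1}.
\]
For $j>i$ the goal is to make both flanking terms vanish, forcing $\Tor_i^R(R/I,K)_j=0$, i.e.\ $t_i^R(R/I)\le i$.

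The main obstacle is that the left flanking term $\Tor_i^R(R/J,K)$ lives in the \emph{same} homological degree $i$ that we are trying to control, so a naive induction on $i$ is circular; note that a purely regularity-based induction fails for the same reason, because $J:I$ need not be comparable to $I$. I would break this with a double induction: an outer induction on $i$ (which kills $\Tor_{i-1}^R(R/(J:I),K)_{j-1}$ for $j-1>i-1$, since $J:I\in\F$ and its index dropped to $i-1$) together with an inner induction on the invariant $\dim_K I_1$ (which kills $\Tor_i^R(R/J,K)_j$ for $j>i$). The inner induction is well-founded precisely because $I/J$ is cyclic generated by the single linear form $x$, whence $I_1=J_1\oplus Kx$ and $\dim_K J_1=\dim_K I_1-1$; the base case $\dim_K I_1=0$ forces $I=0$, where $R/I=R$ is free and the bound is trivial for $i\ge 1$, while $i=0$ is clear for every $I$. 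The only delicate points are verifying $J:I=\Ann(I/J)$ and that the degree shift in $I/J\cong (R/(J:I))(-1)$ is exactly $1$: this is what drops the index of the connecting term to $i-1$ and lets the outer induction close, so it must be checked rather than taken for granted.
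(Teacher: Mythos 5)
Your argument is correct, and it is essentially the proof the paper leaves to the reader (the lemma is stated with ``one easily proves'' and a pointer to \cite{CTV} and \cite{CRV}, where exactly this double induction --- outer on the homological degree $i$, inner on $\dim_K I_1$, using the exact sequence $0\to (R/(J{:}I))(-1)\to R/J\to R/I\to 0$ --- is the standard argument). You correctly identify and check the two delicate points ($J{:}I=J{:}x=\Ann(I/J)$ and the degree shift by $1$), and deducing the Koszulness of $R/I$ from $\reg_R(R/I)=0$ via Theorem \ref{Giux1}(1) is the intended route.
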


\begin{example} 
\label{non-LG with filtration}
Let 
  $$R=K[a,b,c,d]/(ac, ad, ab - bd, a^2 + bc, b^2).$$
 We have seen in  Example \ref{KosNotLG} that $R$ is not LG-quadratic.  We show now that $R$ is Koszul by constructing a Koszul filtration. 
 Indeed, there is a Koszul filtration based on the given system of coordinates, i.e. a Koszul filtration whose ideals are generated by residue classes  of variables. Here it is: 
 $$\F=\{ (a, b, c, d), (a, c, d), (c, d), (a, c), (c), (a), 0\}.$$
 To check that it is a Koszul filtration we observe that in $R$ one has: 
$$
\begin{array}{rl}
(a, c, d):& \!\!\!\! \!\! (a, b, c, d)=(a, b, c, d) \\
(c, d):&  \!\!\!\! \!\!(a, c, d) =(a, b, c, d) \\
(c):& \!\!\!\! \!\!(c, d)= (a, c)\\
(c):& \!\!\!\! \!\!(a, c)=(a, c, d) \\
0:& \!\!\!\! \!\!(a)=(c, d) \\
0:& \!\!\!\! \!\!(c)=(a)
\end{array} 
$$
\end{example}

 The following notion is  very natural for algebras with a canonical coordinate system (e.g. in  the toric case). 

\begin{definition} An algebra $R$ is strongly Koszul if there exists a basis $X$ of $R_1$ such that for every $Y\subset X$ and for every $x\in X\setminus Y$ there exists $Z\subseteq  X$ such that $(Y):x=(Z)$. 
\end{definition} 

This  definition of strongly Koszul is taken from \cite{CDR} and is slightly different from the one given in \cite{HHR}. In \cite{HHR} it is assumed that the  basis $X$ of $R_1$ is totally ordered  and in the definition one adds the requirement that $x$ is larger than every element in $Y$. 
  
\begin{remark} If $R$ is strongly Koszul with respect to a basis $X$ of $R_1$ then the set $\{ (Y) : Y\subseteq X\}$ is obviously a Koszul filtration. \end{remark} 

We have: 
\begin{theorem}
\label{sKos}
Let $R=S/I$ with $S=K[x_1,\dots,x_n]$ and $I\subset S$ be an ideal generated by monomials of degrees $\leq d$. 
Then $R^{(c)}$ is strongly Koszul for every $c\geq d-1$.
\end{theorem}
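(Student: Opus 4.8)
The plan is to exhibit an explicit basis of $(R^{(c)})_1$ witnessing the strong Koszul property and to reduce the defining condition to a statement about the minimal monomial generators of colon ideals. Since $I$ is a monomial ideal, $R=S/I$ has a $K$-basis given by the residue classes of the monomials of $S$ not lying in $I$, and $(R^{(c)})_1=R_c$ has as a $K$-basis the set $X$ of residue classes of degree-$c$ monomials of $S$ not in $I$. I take this $X$ as the candidate basis. Given $Y\subseteq X$ and $x\in X\setminus Y$, the ideal $(Y)$ of $R^{(c)}$ is a monomial ideal, and a monomial-by-monomial check shows that the colon ideal $(Y):x$ is again spanned by monomials, so it has a well-defined set of minimal monomial generators. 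Because $X$ is a $K$-basis of $(R^{(c)})_1$, we have $(Y)\cap(R^{(c)})_1=\mathrm{span}_K(Y)$, whence $x\notin(Y)$; thus $(Y):x$ is a proper ideal and all its minimal generators have degree at least $c$ in $S$. The whole statement therefore reduces to the claim that every minimal monomial generator of $(Y):x$ has degree exactly $c$, i.e. lies in $X$; then $Z\subseteq X$ can be taken to be this set of minimal generators, and $(Y):x=(Z)$.

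To prove the claim I argue by contradiction. Suppose $w\notin I$ is a monomial of degree $ec$ with $e\geq 2$ representing a minimal generator of $(Y):x$, so that $wx\in(Y)$ in $R^{(c)}$. I then produce a proper degree-$c$ divisor $w'$ of $w$ that still lies in $(Y):x$, contradicting minimality. The construction splits into two cases according to whether $wx$ survives in $R$.

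If $wx\notin I$, then $wx\in(Y)$ forces some $m\in Y$, a monomial of degree $c$, to divide $wx$. For each variable $x_i$ the exponent that $w'$ must carry to retain the divisibility $m\mid w'x$ is $\max(\exp_i(m)-\exp_i(x),0)$; these exponents define a monomial $b\mid w$ of degree $\deg m-\deg\gcd(m,x)\leq c$. I enlarge $b$ to a degree-$c$ divisor $w'$ of $w$, which is possible since $\deg w=ec\geq 2c$, and by construction $m\mid w'x$, so $w'x\in(Y)$ and $w'\in(Y):x$.

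The hard part is the case $wx\in I$, and this is exactly where the hypothesis $c\geq d-1$ enters. Here some monomial generator $g$ of $I$, of degree at most $d$, divides $wx$. Since $w\notin I$ we have $g\nmid w$, and a short exponent comparison shows this forces $\gcd(g,x)\neq 1$, hence $\deg\gcd(g,x)\geq 1$. The monomial $b$ recording the required exponents $\max(\exp_i(g)-\exp_i(x),0)$ satisfies $b\mid w$ and $\deg b=\deg g-\deg\gcd(g,x)\leq d-1\leq c$. Extending $b$ to a degree-$c$ divisor $w'$ of $w$ yields $g\mid w'x$, so $w'x\in I$ and again $w'\in(Y):x$. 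In both cases $w'$ is a proper degree-$c$ divisor of $w$ lying in $(Y):x$, contradicting the minimality of $w$ and completing the reduction. The only use of the bound on $c$ is the estimate $\deg g-\deg\gcd(g,x)\leq d-1\leq c$, which is precisely the content of $c\geq d-1$; so I expect the $wx\in I$ case to be the crux, with everything else essentially bookkeeping on exponent vectors.
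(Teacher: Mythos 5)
Your proof is correct, and it is a complete write-up of exactly the argument the paper only sketches: the paper says the proof rests on the combinatorics of colon ideals of monomial ideals together with the fact that $R^{(c)}$ is a direct summand of $R$, and your exponent-vector computation --- reducing a minimal generator of $(Y):x$ of degree $ec$ with $e\geq 2$ to a degree-$c$ divisor via the two cases $wx\notin I$ and $wx\in I$, the latter using $\deg g-\deg\gcd(g,x)\leq d-1\leq c$ --- is precisely that combinatorial content, with the direct-summand fact entering implicitly through your use of the monomial $K$-basis of $R^{(c)}$ inside $R=S/I$. Nothing to correct.
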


The proof of Theorem \ref{sKos} is based on the fact that the Veronese ring $R^{(c)}$ is a direct summand of $R$ and that  computing the colon ideal of monomial  ideals in a polynomial ring  is a combinatorial operation. 
 Let us single out an  interesting special case: 

\begin{theorem}
\label{monKos}
Let $S=K[x_1,\dots,x_n]$ and let $I\subset S$ be an ideal generated by monomials of degree $2$. Then $S/I$ is strongly Koszul. 
 \end{theorem}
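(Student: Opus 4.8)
The plan is to check the defining condition of strong Koszulness directly, using the coordinate basis $X=\{\bar x_1,\dots,\bar x_n\}$ of $R_1$; these residue classes really do form a basis since $I$ contains no linear forms. (One could instead observe that the statement is the case $d=2$, $c=1$ of Theorem~\ref{sKos}, because $R^{(1)}=R$ and $1\ge d-1$; but the computation below is the combinatorial core of that result and makes transparent why degree $2$ is the decisive bound.)

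I would fix a subset $Y\subseteq X$ and an element $\bar x_k\in X\setminus Y$, say $Y=\{\bar x_i : i\in A\}$ with $k\notin A$, and write $(Y)$ for the ideal of $R$ they generate. The first step is to transport the colon computation to the polynomial ring: if $L\subset S$ denotes the monomial ideal $L=(x_i : i\in A)+I$, then for $f\in S$ one has $\bar x_k\bar f\in (Y)$ in $R$ if and only if $x_kf\in L$, so that $(Y):\bar x_k$ is exactly the image in $R$ of the colon ideal $L:x_k$ computed in $S$. Since $L$ is generated by monomials, $L:x_k$ is again a monomial ideal, generated by $\{\,m/\GCD(m,x_k)\,\}$ as $m$ ranges over the chosen monomial generators of $L$.

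The key step is then to read off these generators $m/\GCD(m,x_k)$ by degree. The linear generators $x_i$ with $i\in A$ satisfy $\GCD(x_i,x_k)=1$ (as $i\ne k$) and contribute the variables $x_i$. For a quadratic generator $q$ of $I$ there are two cases: if $x_k\nmid q$ then $q/\GCD(q,x_k)=q$ stays quadratic, but $q\in I$ and so its class is zero in $R$; if $x_k\mid q$, say $q=x_kx_j$ (allowing $j=k$ when $q=x_k^2$), then $q/\GCD(q,x_k)=x_j$ is a single variable. Hence, after passing to $R$, every quadratic contribution dies and what survives is generated by variables: the ideal $(Y):\bar x_k$ is generated by the set $Z=\{\bar x_i : i\in A\}\cup\{\bar x_j : x_kx_j\text{ is a quadratic generator of }I\}\subseteq X$. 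This verifies the condition, so $R$ is strongly Koszul.

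There is no genuinely hard step here; the point to watch — and the reason the hypothesis that the generators have degree $2$ is sharp — is that one must guarantee nothing of degree $\ge 2$ remains in the colon ideal once we are inside $R$. This holds precisely because every generator of $I$ has degree $2$: such a generator either is divisible by $x_k$, in which case dividing drops it to a single variable, or it is not, in which case it is unaffected by the colon and vanishes in $R$. Were $I$ allowed generators of degree $\ge 3$, a generator divisible by $x_k$ would only drop to degree $\ge 2$ and could survive in $R$, which is exactly the obstruction that forces the Veronese hypothesis $c\ge d-1$ in Theorem~\ref{sKos}.
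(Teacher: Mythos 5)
Your proof is correct and follows essentially the route the paper has in mind: Theorem~\ref{monKos} is presented there as the special case $d=2$, $c=1$ of Theorem~\ref{sKos}, whose proof is explicitly said to rest on the combinatorial computation of colon ideals of monomial ideals, which is exactly the calculation you carry out. Your explicit description of the set $Z$ and the remark on why degree~$2$ is the decisive bound match the intended argument, so nothing further is needed.
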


The results  presented for Veronese rings and Veronese modules have their analogous in the multigraded setting, see \cite{CHTV}.  We discuss below    the bigraded case.

Let $$S=K[x_1,\dots,x_n,y_1,\dots,y_m]$$  with $\ZZ^2$-grading induced by the assignment $\deg(x_i)=(1,0)$ and $\deg(y_i)=(0,1)$. For every $c=(c_1,c_2)$ we look at the diagonal subalgebra $$S_\Delta=\oplus_{a\in \Delta} S_{a}$$ where $$\Delta=\{ic : i\in \ZZ\}.$$ 
The algebra $S_\Delta$ is nothing but the Segre product of the $c_1$-th Veronese ring of $K[x_1,\dots,x_n]$ and the $c_2$-th Veronese ring of $K[y_1,\dots,y_m]$. 
For a $\ZZ^2$-graded standard $K$-algebra $R=S/I$ with  $I\subset S$  a bigraded ideal we may consider the associate diagonal algebra 
$$R_\Delta=\oplus_{a\in \Delta} R_{a}$$ and similarly for modules. One has: 

\begin{theorem}
\label{Segremod}
\begin{itemize}
\item[(1)]  For every $(a,b)\in \ZZ^2$ the  $S_\Delta$-submodule $S(-a,-b)_\Delta$ of $S$ has a linear resolution. 
\item[(2)] For every $\ZZ^2$-standard graded algebra $R$ one has that $R_\Delta$ is Koszul for ``large" $\Delta$ (i.e. $c_1\gg 0$ and $c_2\gg 0)$. One can give explicit  bounds in terms of the bigraded Betti numbers of $R$ as a $S$-module. 
\end{itemize} 
 \end{theorem}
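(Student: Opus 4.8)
The plan is to mirror the proofs of Theorems \ref{cc1} and \ref{cc2}, replacing the Veronese functor $(-)^{(c)}$ by the exact diagonal functor $(-)_\Delta$ from bigraded $S$-modules to $S_\Delta$-modules, and using Lemma \ref{blabla} as the engine. Write $B=S_\Delta$, and for $a,b\ge 0$ call $S(-a,-b)_\Delta$ a \emph{building block}; these play the role of the Veronese submodules $V_u$. It suffices to treat $a,b\ge 0$, which is the case used in (2). The whole argument rests on first establishing (1).

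For (1) the key is the device used for $V_u=(\mm_R^u(u))^{(c)}$ in Theorem \ref{cc1}: realize each building block as the diagonal of an auxiliary $S$-module with a linear $S$-resolution. Let $\mm_x=(x_1,\dots,x_n)$ and $\mm_y=(y_1,\dots,y_m)$, set $i_0=\max\{\lceil a/c_1\rceil,\lceil b/c_2\rceil\}$, $d_1=i_0c_1-a\ge 0$, $d_2=i_0c_2-b\ge 0$, and consider $M=\mm_x^{d_1}\mm_y^{d_2}$. A direct check on graded pieces gives $(M(d_1,d_2))_\Delta=S(-a,-b)_\Delta(i_0)$, with $M(d_1,d_2)$ generated in bidegree $(0,0)$. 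Since $\mm_x^{d_1}$ and $\mm_y^{d_2}$ are powers of maximal ideals in disjoint sets of variables, $M$ has a linear $S$-resolution $\FF$ (the tensor product over $S$ of the base-changed linear resolutions of $\mm_x^{d_1}$ and $\mm_y^{d_2}$), with $F_i=\bigoplus_{s+t=i} S(-s,-t)^{\beta^x_s\beta^y_t}$ where $0\le s\le n-1$ and $0\le t\le m-1$. Applying $(-)_\Delta$ yields a complex $\GG=\FF_\Delta$ of $B$-modules with $H_0(\GG)=S(-a,-b)_\Delta(i_0)$, $H_j(\GG)=0$ for $j>0$, and $G_i=\bigoplus_{s+t=i}S(-s,-t)_\Delta^{\beta^x_s\beta^y_t}$. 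Crucially the shifts $(s,t)$ occurring are bounded by $n-1$ and $m-1$ independently of $(a,b)$, so the building blocks arising form a finite family closed under the construction. I would then prove $t_i^B(S(-a,-b)_\Delta)\le i+i_0$ for every such $(a,b)$ by induction on $i$: the base case records that the block is generated in degree $i_0$, and the inductive step applies the $t_i$-bound of Lemma \ref{blabla} to $\GG$. There the term $\beta_i$ vanishes since $H_j(\GG)=0$ for $j>0$, while $\alpha_i$ is controlled because each $G_{i-j}$ is a sum of blocks $S(-s,-t)_\Delta$ with $s+t=i-j$ and generation degree $\max\{\lceil s/c_1\rceil,\lceil t/c_2\rceil\}\le i-j$; combined with the inductive hypothesis $t_j^B(S(-s,-t)_\Delta)\le j+\max\{\lceil s/c_1\rceil,\lceil t/c_2\rceil\}$ this gives $\alpha_i\le i$. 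Hence every building block has a linear resolution, with $\reg_B(S(-a,-b)_\Delta)=\max\{\lceil a/c_1\rceil,\lceil b/c_2\rceil\}$. The same Lemma \ref{blabla} computation applied to the Koszul complex resolving $K$ over $S$ (whose diagonal has terms $G_j=\bigoplus_{p+q=j}S(-p,-q)_\Delta^{\binom{n}{p}\binom{m}{q}}$) then gives $\reg_B(K)\le\sup_j\{\reg_B(G_j)-j\}\le 0$, so $B=S_\Delta$ is itself Koszul.

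For (2) I would copy the proof of Theorem \ref{cc2}. Let $\FF$ be the minimal bigraded free $S$-resolution of $R=S/I$ and set $\GG=\FF_\Delta$, a complex of $B$-modules with $H_0(\GG)=R_\Delta$, $H_j(\GG)=0$ for $j>0$, and $G_i=\bigoplus_{(p,q)}S(-p,-q)_\Delta^{\beta^S_{i,(p,q)}(R)}$. By (1) each $G_i$ has $\reg_B(G_i)=\max\{\max(\lceil p/c_1\rceil,\lceil q/c_2\rceil):\beta^S_{i,(p,q)}(R)\ne 0\}$, so Lemma \ref{blabla} gives
\begin{equation*}
\reg_B(R_\Delta)\le\sup_i\ \max\{\max(\lceil p/c_1\rceil,\lceil q/c_2\rceil)-i:\beta^S_{i,(p,q)}(R)\ne 0\}.
\end{equation*}
Because $R$ has a finite minimal free resolution over the polynomial ring $S$, only finitely many $\beta^S_{i,(p,q)}(R)$ are nonzero; hence once $c_1\ge\max\{p/(i+1):\beta^S_{i,(p,q)}(R)\ne 0\}$ and $c_2\ge\max\{q/(i+1):\beta^S_{i,(p,q)}(R)\ne 0\}$ the right-hand side is at most $1$. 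Since $B$ is Koszul and $\reg_B(R_\Delta)\le 1$, Theorem \ref{Giux1}(1) yields that $R_\Delta$ is Koszul, and the displayed inequalities are the promised explicit bounds in terms of the bigraded Betti numbers. In particular $R_\Delta$ is Koszul for $c_1,c_2\gg 0$.

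The main obstacle is part (1), and within it the identification of the correct auxiliary module: once one sees that $S(-a,-b)_\Delta$ is, up to a $B$-shift, the diagonal of $\mm_x^{d_1}\mm_y^{d_2}$, everything reduces to the linearity of the $S$-resolution of a product of powers of variable-disjoint maximal ideals, together with bookkeeping through Lemma \ref{blabla}. The two points demanding care are verifying that this product really has a linear resolution (so that the diagonal complex $\GG$ has all its terms among a finite, closed family of building blocks) and checking that the shifts $(s,t)$ stay in the range $0\le s\le n-1$, $0\le t\le m-1$, so that the induction on homological degree is well-founded.
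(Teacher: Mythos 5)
Your proposal is correct: the paper itself gives no proof of Theorem \ref{Segremod}, deferring to \cite{CHTV} with the remark that the Veronese results ``have their analogous in the multigraded setting,'' and your argument is exactly that analogue, transplanting the proofs of Theorems \ref{cc1} and \ref{cc2} (realizing $S(-a,-b)_\Delta$ up to shift as the diagonal of $\mm_x^{d_1}\mm_y^{d_2}$, running the induction through Lemma \ref{blabla}, and finishing with Theorem \ref{Giux1}). This is the intended route, and the details you flag -- the linearity of the resolution of the variable-disjoint product of powers and the boundedness of the shifts $(s,t)$ -- check out.
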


 Let  $I$ be an homogeneous ideal of $S=K[x_1,\dots,x_n]$ generated by elements $f_1,\dots,f_r$ of degree $d$. The Rees ring 
 $$\Rees(I)=\bigoplus_{i\in \NN}  I^i=S[f_1t,\dots,f_rt]\subset S[t]$$
  is  a bigraded  $K$-algebra. Its component of degree $(i,j)$ is  
  $$\Rees(I)_{(i,j)}=(I^j)_{jd+i}$$
  It is easy to check that $\Rees(I)$ is a  standard bigraded algebra. It  can be seen as a quotient  ring of 
  $S[y]=S[y_1,\dots,y_r]$ bigraded by $\deg(x_i)=(1,0)$ and $\deg(y_i)=(0,1)$. 
  Then we may apply Theorem \ref{Segremod} and we get that
 
  \begin{corollary}
  \label{seserompe} 
There exist integers $c_0$ and $e_0$ (depending on $I$) such that  for every  $c\geq c_0$ and $e\geq e_0$ the $K$-subalgebra of $S$ generated by the vector space $(I^{e})_{ed+c}$ is Koszul.
  \end{corollary}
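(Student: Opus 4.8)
The plan is to recognize the subalgebra in question as a diagonal algebra of the Rees ring and then invoke Theorem \ref{Segremod}(2). As already observed, $\Rees(I)$ is a standard bigraded quotient of $S[y_1,\dots,y_r]$ with $\deg(x_i)=(1,0)$ and $\deg(y_j)=(0,1)$, so it is an admissible input for that theorem. For positive integers $c,e$ I would take the diagonal $\Delta=\{k(c,e):k\in\ZZ\}$ determined by the weight vector $(c,e)$ and compute the graded pieces of the diagonal algebra using $\Rees(I)_{(i,j)}=(I^j)_{jd+i}$: the component of $\Rees(I)_\Delta$ in degree $k$ is
$$\Rees(I)_{(kc,ke)}=(I^{ke})_{ke\,d+kc}=(I^{ke})_{k(ed+c)},$$
so its degree-one piece is exactly $(I^e)_{ed+c}$.

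Next I would transport $\Rees(I)_\Delta$ into $S$. The assignment $f\,t^{ke}\mapsto f$, for $f\in(I^{ke})_{k(ed+c)}$, defines a graded $K$-algebra isomorphism of $\Rees(I)_\Delta$ onto the graded subring
$$A=\bigoplus_{k\ge 0}(I^{ke})_{k(ed+c)}\subset S,$$
which is closed under multiplication because the product of an element of $(I^{ke})_{k(ed+c)}$ with one of $(I^{\ell e})_{\ell(ed+c)}$ lies in $I^{(k+\ell)e}$ in degree $(k+\ell)(ed+c)$, hence in $A_{k+\ell}$. Thus $A_1=(I^e)_{ed+c}$, and the $K$-subalgebra of $S$ generated by $(I^e)_{ed+c}$ is $K[A_1]\subseteq A$. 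I would then apply Theorem \ref{Segremod}(2): since $\Rees(I)$ is $\ZZ^2$-standard graded, the diagonal algebra $\Rees(I)_\Delta$ is Koszul as soon as the diagonal is large, i.e.\ for $c\ge c_0$ and $e\ge e_0$ with $c_0,e_0$ depending on the bigraded Betti numbers of $\Rees(I)$ over $S[y]$, hence on $I$. Being Koszul, $\Rees(I)_\Delta\cong A$ is in particular standard graded, so $A_k=A_1^{\,k}$ for every $k$, that is $A=K[A_1]$. Therefore the subalgebra generated by $(I^e)_{ed+c}$ coincides with $A\cong\Rees(I)_\Delta$ and is Koszul, as claimed.

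Since Theorem \ref{Segremod} does the genuine homological work, the only point requiring care is the final identification: one must know that for a large diagonal $\Rees(I)_\Delta$ is generated in degree one, so that the abstract standard graded Koszul algebra produced by Theorem \ref{Segremod} really is the concrete $K[(I^e)_{ed+c}]$ and not a proper standard-graded subalgebra of $A$. This is precisely what Koszulness guarantees (Koszul $\Rightarrow$ quadratic $\Rightarrow$ standard graded), so no additional argument is needed beyond the bookkeeping above; the remaining subtlety is merely to match the threshold ``$\Delta$ large'' with the two integer bounds $c\ge c_0$, $e\ge e_0$ in the statement.
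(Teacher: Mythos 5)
Your proposal is correct and follows exactly the route the paper intends: identify $K[(I^e)_{ed+c}]$ with the diagonal subalgebra $\Rees(I)_\Delta$ for $\Delta$ determined by $(c,e)$, using $\Rees(I)_{(i,j)}=(I^j)_{jd+i}$, and then invoke Theorem \ref{Segremod}(2) for the standard bigraded Rees algebra. The paper gives no further detail beyond ``apply Theorem \ref{Segremod}'', so your careful bookkeeping (in particular the observation that Koszulness of the diagonal forces generation in degree one, so the generated subalgebra is all of $\Rees(I)_\Delta$) is a faithful and slightly more explicit rendering of the same argument.
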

  
  If one has information or bounds on the bigraded resolution of $\Rees(I)$ as a $S[y]$-module then  Corollary \ref{seserompe}  can be formulated more precisely. One of the few families of ideals $I$ for which the resolution of $\Rees(I)$ is known are the complete intersections. If $f_1,\dots,f_r$ form a regular sequence then 
  $$\Rees(I)=S[y]/I_2
  \left(
  \begin{array}{cccc}
  y_1 & y_2& \dots & y_r\\
  f_1 & f_2& \dots & f_r
  \end{array}
 \right)
 $$ 
 and $\Rees(I)$ is resolved by the Eagon-Northcott complex.  Then applying the principle described above to this specific case one has: 
 
 \begin{theorem}
  \label{seserompe1} 
 Let $f_1,\dots,f_r$  be  a regular sequence of elements of degree $d$ in $S=K[x_1,\dots,x_n]$ and $I=(f_1,\dots,f_r)$. For $c,e\in \NN$ set $A=K[(I^e)_{ed+c}]$. Then: 
 \begin{itemize}
 \item[(1)]   If $c\geq d/2$ then $A$ is quadratic. 
 \item[(2)]   if $c\geq d(r-1)/r$ then $A$ is Koszul. 
\end{itemize} 
 \end{theorem}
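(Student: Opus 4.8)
The plan is to realize $A$ as a diagonal subalgebra of the Rees ring and then run the ``resolve, restrict to the diagonal, estimate with Lemma \ref{blabla}'' strategy used for Theorems \ref{cc1} and \ref{cc2}. Bigrade $S[y]=S[y_1,\dots,y_r]$ by $\deg x_i=(1,0)$, $\deg y_i=(0,1)$ and put $\Delta=\{i(c,e):i\in\ZZ\}$. Since $\Rees(I)_{(p,q)}=(I^q)_{qd+p}$, one has $\Rees(I)_{(c,e)}=(I^e)_{ed+c}=A_1$, and because $\Rees(I)_\Delta$ is a surjective image of the standard graded ring $B:=S[y]_\Delta$ it is generated in degree $1$; hence $A=\Rees(I)_\Delta$. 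Here $B$ is the Segre product of the $c$-th Veronese of $K[x]$ and the $e$-th Veronese of $K[y]$, so $B$ is Koszul, and applying the exact functor $-_\Delta$ to $0\to J\to S[y]\to\Rees(I)\to0$ presents $A=B/\bar J$ with $\bar J=J_\Delta$.

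Next I would bring in the Eagon-Northcott resolution $\FF$ of $\Rees(I)=S[y]/J$, where $J=I_2(M)$ is generated by the $2\times2$ minors of the $2\times r$ matrix $M$ with rows $(y_1,\dots,y_r)$ and $(f_1,\dots,f_r)$. A direct computation of the shifts shows that, for $1\le j\le r-1$, the free module $F_j$ has its minimal generators in the bidegrees $(ds,\,j-s+1)$, $s=1,\dots,j$; in particular $F_1$ is generated in bidegree $(d,1)$ and the resolution has length $r-1$. Applying $-_\Delta$ turns $\FF$ into a complex $\GG$ of $B$-modules with $H_0(\GG)=A$ and $H_j(\GG)=0$ for $j>0$, whose term $G_j=(F_j)_\Delta$ is a direct sum of bigraded Veronese modules $S[y](-ds,-(j-s+1))_\Delta$. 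By Theorem \ref{Segremod}(1) each of these has a linear $B$-resolution, whence $\reg_B(S[y](-a,-b)_\Delta)=t_0^B(S[y](-a,-b)_\Delta)=\max\{\lceil a/c\rceil,\lceil b/e\rceil\}$.

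For (2) I would estimate $\reg_B(A)$ with the regularity half of Lemma \ref{blabla}. As the higher homology of $\GG$ vanishes, $\reg_B(A)\le\sup_{0\le j\le r-1}(\reg_B(G_j)-j)$ with $\reg_B(G_j)=\max_{1\le s\le j}\max\{\lceil ds/c\rceil,\lceil(j-s+1)/e\rceil\}$. The contributions $\lceil(j-s+1)/e\rceil-j$ are never positive, while $\lceil ds/c\rceil-j$ is largest at $s=j$, and over $j\le r-1$ at $j=r-1$; the requirement $\lceil d(r-1)/c\rceil\le r$ is precisely $c\ge d(r-1)/r$. Thus $c\ge d(r-1)/r$ forces $\reg_B(A)\le1$, and since $B$ is Koszul, Theorem \ref{Giux1}(1) shows $A$ is Koszul. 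For (1) I would instead use the $t_i$ half of Lemma \ref{blabla}: $t_1^B(A)\le\max\{t_0^B(G_1),t_1^B(G_0)\}=\lceil d/c\rceil$, the term $t_1^B(G_0)=t_1^B(B)$ being $-\infty$; so $c\ge d/2$ gives $t_1^B(A)\le2$, i.e.\ $\bar J$ is generated in $B$-degrees at most $2$.

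The step I expect to demand the most care is passing from ``$\bar J$ generated in degrees $\le2$'' to quadraticity of $A$ in its own canonical presentation, since $A_1=B_1/\bar J_1$ can be strictly smaller than $B_1$. This rests on the general fact that a degree-$\le2$ quotient of a quadratic algebra is quadratic: writing $\Sym(B_1)\twoheadrightarrow B\twoheadrightarrow A$, the kernel $L=\ker(\Sym(B_1)\to A)$ is generated in degrees $\le2$ (the quadrics defining the quadratic ring $B$ together with degree-$\le2$ lifts of the generators of $\bar J$); its linear part $L_1=\bar J_1$ satisfies $\Sym(B_1)/(L_1)=\Sym(A_1)$, and after this reduction only quadratic generators survive, so $A$ is quadratic. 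The remaining points, namely the Eagon-Northcott bidegree count and the exactness bookkeeping for $-_\Delta$ inside Lemma \ref{blabla}, are mechanical.
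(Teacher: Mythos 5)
Your argument is correct and follows exactly the strategy the paper outlines for Theorem \ref{seserompe1} (and carries out for Theorems \ref{cc1} and \ref{cc2}): realize $A$ as the diagonal $\Rees(I)_\Delta$ inside the Segre--Veronese ring $B=S[y]_\Delta$, restrict the Eagon--Northcott resolution to the diagonal, bound $t_1^B(A)$ and $\reg_B(A)$ via Lemma \ref{blabla} together with the linearity of the modules $S[y](-a,-b)_\Delta$ from Theorem \ref{Segremod}, and conclude with Theorem \ref{Giux1}; the bidegree count $(ds,\,j-s+1)$ for the Eagon--Northcott terms and the resulting numerical conditions $c\geq d/2$ and $c\geq d(r-1)/r$ are exactly right. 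The paper itself only sketches this and defers the details to \cite{CHTV}, so your write-up is essentially a faithful expansion of the intended proof.
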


 See  \cite{CHTV} for  details of the proofs of \ref{seserompe1}. 
  
  \begin{example}
  \label{diagTru} 
  With $r=n$, $d=2$ and $f_i=x_i^2$ for every $i=1,\dots,n$  we have that the toric algebra
  $$K[x^a : a\in \NN^n, |a|=2+c \mbox{ and } \max(a)\geq 2]$$
  is quadratic for every $c$ and Koszul for $c\geq 2(n-1)/n$. 
  \end{example} 
  
Given integers $n,d,s$ we set 
   $$\PV(n,d,s)=K[x^a : a\in \NN^n, |a|=d \mbox{ and } \#\{ i  : a_i>0\}\leq s].$$
   This is called the pinched Veronese generated by the monomials in $n$ variables, of total degree $d$ and supported on at most $s$ variables. 
  
   \begin{question}   For which values of $n,d,s$ is the algebra $\PV(n,d,s)$ quadratic or Koszul?  Not all of them are quadratic, for instance $\PV(4,5,2)$ is defined,  according to CoCoA \cite{CoCoA},  by $168$ quadrics and $12$ cubics. \end{question} 

The algebra of Example \ref{diagTru}  for $c=n-2$ coincides with  the pinched Veronese $\PV(n,n,n-1)$. 
Hence $\PV(n,n,n-1)$ is quadratic for every $n$ and Koszul for $n>3$. For $n=3$  we have that 
$$\PV(3,3,2)=K[x^3,x^2y,x^2z,xy^2,xz^2, y^3, y^2z, yz^2,z^3]$$
is quadratic. The argument above does not answer the question whether $\PV(3,3,2)$ is a Koszul algebra. This turns out to be a difficult question on its own.  In  \cite{Ca} and \cite{CC} it is proved that: 

 \begin{theorem}
  \label{pV332} 
  The pinched Veronese $\PV(3,3,2)$ is Koszul. The same hold for the generic projection of the Veronese surface of $\PP^9$ to $\PP^8$. 
  \end{theorem}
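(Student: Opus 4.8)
The plan is to prove the (toric) pinched Veronese $\PV(3,3,2)$ Koszul directly and then to deduce the generic projection by a semicontinuity argument. First I would set $R=K[x,y,z]$ and $V=R^{(3)}$, and realize $A=\PV(3,3,2)$ as the subalgebra of $V$ generated by the nine cubic monomials other than $m_0:=xyz$. A short combinatorial check shows that the ``pinch'' occurs only in degree one: $A_k=V_k$ for every $k\geq 2$, while $\dim_K A_1=9$ and $\dim_K V_1=10$. By Theorem \ref{cc1} the ambient ring $V$ is Koszul and its Veronese submodules have linear resolutions over $V$. Since $m_0^2=(x^2y)(yz^2)\in A$, the ring $V$ is generated over $A$ by $1$ and $m_0$, giving a short exact sequence of graded $A$-modules
$$0\to A\to V\to W\to 0,$$
where $W=V/A$ is cyclic, generated in degree $1$.

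By Theorem \ref{AEP} it is enough to prove $\reg_A(K)=0$ for $K=A/\mm_A$. I would establish this by a \emph{generalized Koszul filtration}: build a finite family $\F$ of finitely generated graded $A$-modules, to include $K$, the ideals $(Y)$ generated by subsets $Y$ of the nine generators, their colon modules, and the pinch modules $V$ and $W$ above, together with a closed system of short exact sequences relating the members of $\F$ up to internal shift. For each sequence $0\to M'\to M\to M''\to 0$ the regularity inequalities that follow from the long exact $\Tor(-,K)$ sequence (a special case of Lemma \ref{blabla}) let one propagate linearity, so that if the family is closed under passing to first syzygies, with controlled shifts, a simultaneous induction on homological and internal degree shows that every member of $\F$ has a linear $A$-resolution; applied to $K$ this gives $\reg_A(K)=0$, i.e.\ $A$ Koszul. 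The inclusion $A\subset V$ is what makes the otherwise intractable syzygies accessible: the linearity of the resolutions of the Veronese modules over the Koszul ring $V$ (Theorem \ref{cc1}), transported along $A\subset V$ through the sequence above, controls exactly those members of $\F$ that are supported on the missing monomial $m_0$.

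The hard part is the construction of $\F$ and the proof that it closes. In contrast to the full Veronese $V$, the pinched ring $A$ does \emph{not} admit a Koszul filtration by ideals generated by residue classes of variables in the sense of Lemma \ref{filtflag}: the relevant colon ideals $(Y):x$ fail to be generated by linear forms, and this failure is precisely what makes $\PV(3,3,2)$ a genuinely difficult case rather than one settled by the methods of Example \ref{non-LG with filtration}. I expect to resolve it by enlarging $\F$ with a small, explicitly chosen collection of extra modules (the cyclic pinch modules and a few of their low syzygies) and then verifying, by a finite Gr\"obner-basis and syzygy computation over the toric ring $A$ in CoCoA up to a bounded homological degree, that the enlarged family is closed with the correct shifts; this finite verification is the crux of the argument. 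Finally, the generic projection of the Veronese cubic surface follows formally: as the projection centre varies the resulting coordinate rings form a flat family of constant Hilbert function (equal to the one computed above, since $A_k=V_k$ for $k\geq 2$ persists generically), so by upper semicontinuity of the graded Betti numbers of the residue field the vanishing $\beta_{ij}^A(K)=0$ for $j\neq i$ at the special member $A=\PV(3,3,2)$ forces the same vanishing at the generic member. Hence the generic projection is Koszul as well.
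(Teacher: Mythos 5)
Note first that these notes do not actually prove Theorem \ref{pV332}: it is stated with a pointer to \cite{Ca} and \cite{CC}, so your proposal must be measured against those papers. Your set-up is correct and is indeed the one used there: with $V=K[x,y,z]^{(3)}$ and $A=\PV(3,3,2)$ one has $A_k=V_k$ for $k\geq 2$ and $V=A\oplus Km_0$ with $m_0=xyz$, so that $W=V/A\simeq K(-1)$ and the short exact sequence $0\to A\to V\to K(-1)\to 0$ lets the Koszulness of $V$ and the linearity of its Veronese modules (Theorem \ref{cc1}) bear on resolutions over $A$. Likewise, the idea of a finite family $\F$ of $A$-modules, containing $K$ and closed up to linear shifts under passing to first syzygies, is exactly the right module-theoretic generalization of a Koszul filtration: if such a family is exhibited and its closure verified, the induction does propagate to all homological degrees and a finite verification suffices.

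The gap is that you never produce $\F$. Everything up to the sentence in which you call the finite verification ``the crux of the argument'' is scaffolding that was available long before the theorem was proved; the problem resisted the standard filtration techniques precisely because, as you yourself note, the obvious candidates (coordinate ideals and their colons) do not close, and no closed family was known. The entire mathematical content of \cite{Ca} and \cite{CC} is the explicit construction of such a family and the (quite delicate) verification that it closes with the correct shifts; a proposal that defers this to an unspecified CoCoA computation has not proved the theorem, it has restated the difficulty. Separately, your semicontinuity argument for the second assertion is weaker than what is claimed: upper semicontinuity of $\beta^{A_t}_{ij}(K)$ gives, for each fixed pair $(i,j)$, a dense open set of projection centres where $\beta_{ij}=0$, hence Koszulness only on a countable intersection of dense opens --- i.e.\ for the scheme-theoretic generic fibre, or for a very general closed fibre over an uncountable field. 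To obtain a general (single Zariski-open) statement, and anything nonvacuous over a countable field, \cite{CC} runs the filtration argument directly for the general projection; that case is not a formal consequence of the pinched one.
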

  It is not clear whether  $\PV(3,3,2)$  is G-quadratic. 
  The Koszul property of a toric ring is equivalent to the Cohen-Macaulay property of intervals of the underlying poset, see \cite[2.2]{PRS}. 
 Recently Tancer has shown that the intervals of the  poset associated with  $\PV(n,n,n-1)$ are shellable for $n>3$, see \cite{Ta}. It is not clear whether the same is true for $n=3$. 
  
  \subsection{Koszul algebras associated with  hyperspace configurations} 
  Another interesting family of Koszul algebras with relations to combinatorics arises in the following way. 
  Let $V=V_1, \dots, V_m$ be a collection of  subspaces of the space of linear forms in the polynomial ring $K[x_1,\dots,x_n]$.
  Denote by $A(V)$ the $K$-subalgebra of $K[x_1,\dots,x_n]$ generated by the elements of the product $V_1\cdots V_m$.  
  We have: 
   \begin{theorem}
  \label{palg} 
The algebra  $A(V)$ is Koszul.   
  \end{theorem}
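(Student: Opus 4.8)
The plan is to realize $A(V)$ as a diagonal subalgebra of a multigraded algebra and then apply the criterion of Theorem \ref{Giux1}(1) with a Segre product as the Koszul ambient ring. After replacing $S$ by the polynomial subring generated by $V_1+\cdots+V_m$ I may assume $V_1+\cdots+V_m=S_1$. Choosing a basis of each $V_i$, I form the multigraded polynomial ring
$$\tilde S=\Sym(V_1)\otimes_K\cdots\otimes_K\Sym(V_m),$$
with the degree of a generator coming from $V_i$ set equal to the $i$-th unit vector $e_i\in\NN^m$. Multiplication inside $S$ gives a multigraded surjection $\tilde S\to R$ onto
$$R=\bigoplus_{a\in\NN^m}\big(V_1^{a_1}\cdots V_m^{a_m}\big)\subseteq S,$$
the subalgebra whose component of multidegree $a$ is spanned by the products of $a_i$ elements of $V_i$. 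Writing $\Delta=\{k(1,\dots,1):k\in\NN\}$ for the total diagonal, the diagonal subalgebra is precisely $R_\Delta=\bigoplus_{k\in\NN}(V_1\cdots V_m)^k=A(V)$.

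Let $G=\tilde S_\Delta$ be the corresponding diagonal of $\tilde S$; this is the Segre product of the polynomial rings $\Sym(V_i)$, which is classically Koszul (see also the diagonal constructions of Theorem \ref{Segremod}). Applying the exact functor $(-)_\Delta$ to $\tilde S\to R$ produces a surjection $G\to A(V)$, so $A(V)=G/J$ for a homogeneous ideal $J$. By Theorem \ref{Giux1}(1), since $G$ is Koszul it suffices to prove $\reg_G A(V)\le 1$.

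To bound $\reg_G A(V)$ I would mimic the proof of Theorem \ref{cc1}. Let $\FF$ be the minimal $\NN^m$-graded free resolution of $R$ over $\tilde S$ and apply $(-)_\Delta$. Since the functor is exact, $\GG=\FF_\Delta$ is a complex of $G$-modules with $H_0(\GG)=A(V)$ and $H_j(\GG)=0$ for $j>0$, whose $j$-th term is a direct sum of shifted Veronese submodules $\tilde S(-b)_\Delta$ of $G$, one for each multidegree $b$ of a minimal $j$-th syzygy of $R$. The multigraded analogue of Theorem \ref{Segremod}(1) (see \cite{CHTV}) gives that each $\tilde S(-b)_\Delta$ has a linear $G$-resolution and is generated in diagonal degree $\max_i b_i$, so $\reg_G\big(\tilde S(-b)_\Delta\big)=\max_i b_i$. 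Feeding this into the regularity part of Lemma \ref{blabla} (the higher homology vanishes, so only the $\alpha$-term survives) yields
$$\reg_G A(V)\le\sup\{\,\max_i b_i-j:\ b\text{ is a multidegree of a }j\text{-th syzygy of }R\,\}.$$

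The \textbf{main obstacle} is to show that this supremum is at most $1$, i.e.\ that $\max_i b_i\le j+1$ for every multidegree $b$ of a $j$-th syzygy of $R$ over $\tilde S$. For $j=0$ this is clear ($b=0$), and for $j=1$ it says that the kernel of $\tilde S\to R$ is generated in multidegrees all of whose entries are at most $2$; these are the evident ``commutation'' relations $uv'-u'v$ arising whenever a product of a vector from $V_i$ with one from $V_{i'}$ agrees in $S$ with another such product. The substance of the theorem is the uniform bound for all $j$, a multigraded regularity property of $R$. I would establish it by induction on $\sum_i\dim V_i$, splitting off one basis vector of some $V_i$ (thereby lowering $\dim V_i$) and comparing resolutions through the resulting short exact sequences, with trivial base case $\sum_i\dim V_i=m$, where every $V_i$ is a line and $A(V)=K[\ell_1\cdots\ell_m]$ is a polynomial ring in one variable. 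This inductive control of the syzygy multidegrees of $R$, rather than the formal reduction to Theorem \ref{Giux1}, is where the real work lies.
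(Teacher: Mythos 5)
Your reduction is sound as far as it goes: realizing $A(V)$ as the diagonal $R_\Delta$ of the multigraded algebra $R=\bigoplus_a V_1^{a_1}\cdots V_m^{a_m}$ (which is the $B(V)$ of the paper, with $\tilde S$ playing the role of $T(V)$), noting that $G=\tilde S_\Delta$ is a Koszul multi-Segre product, and combining Lemma \ref{blabla}, the linearity of the modules $\tilde S(-b)_\Delta$ over $G$, and Theorem \ref{Giux1}(1) correctly reduces everything to a bound on the multidegrees of the syzygies of $R$ over $\tilde S$. But that bound is the entire content of the theorem, and you have not proved it. Your one-sentence plan (induction on $\sum_i\dim V_i$, splitting off a basis vector of some $V_i$) is not developed and faces a real obstacle: if $V_i=V_i'\oplus K\ell$, the algebra built from $V_1,\dots,V_i',\dots,V_m$ sits inside $R$ as a subalgebra, not as a quotient, and there is no evident short exact sequence of $\tilde S$-modules relating the two whose third term is again of the same form; so the inductive comparison of resolutions you invoke does not get off the ground. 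Even your base observation for $j=1$ is wrong as stated: the kernel of $\tilde S\to R$ is \emph{not} in general generated by the pairwise commutation relations $uv'-u'v$. Whenever $\sum_i\dim V_i\ge n+m$ there are genuinely new linear relations among the products of basis vectors of all $m$ subspaces (this is Lemma \ref{lemon6}), giving minimal generators of multidegree $(1,\dots,1)$; your numerical bound $\max_i b_i\le 2$ happens to survive this, but only because of a fact you have not established.

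For comparison, the paper does not attack the syzygy multidegrees of $R$ over $\tilde S$ directly. It proves instead that $\ker\phi'$ admits a Gr\"obner basis in multidegrees componentwise at most $(1,\dots,1)$, deduced by elimination from a \emph{universal} Gr\"obner basis statement for $\ker\phi$, which is a $\ZZ^m$-graded linear change of coordinates of the ideal of $2$-minors of a generic matrix (Lemmas \ref{lemon2}--\ref{lemon3}). That statement in turn rests on the Cartwright--Sturmfels theorem: every ideal with the $\ZZ^m$-graded Hilbert function of a radical Borel-fixed ideal is generated in multidegrees at most $(1,\dots,1)$ (Lemma \ref{lemon4}), together with the explicit identification of the multigraded generic initial ideal of $I_2(t)$ and a combinatorial Hilbert-function identity (Lemma \ref{lemon5} and formula \ref{fk1}). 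Once the initial ideal is known to be generated by monomials squarefree in each block, Koszulness follows from the strongly Koszul property of the Segre product and Gr\"obner deformation (Lemma \ref{lemon1}) --- and, incidentally, the Taylor complex of that initial ideal is what would actually deliver the syzygy bound your argument needs. Without some substitute for this machinery, your proposal has a genuine gap precisely at its load-bearing step.
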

 We outline the proof of Theorem \ref{palg}. Denote by $R$ the polynomial ring $K[x_1,\dots,x_n]$ and set  $d_i=\dim V_i$.  Consider auxiliary variables $y_1,\dots, y_m$ and  the Segre product: 

$$S=K[y_ix_j :  i=1,\dots,m, j=1,\dots,n]$$
of $K[y_1,\dots,y_m]$ with $R$. Set  
$$B(V)=K[y_1V_1,\dots,y_mV_m].$$

and
 
$$T=K[t_{ij} :  i=1,\dots,m, j=1,\dots,n].$$

Note that  $B(V)$ is a $K$-subalgebra of  $S$. We give degree
$e_i\in {\bf Z}^m$ to $y_ix_j$ and to $t_{ij}$ so that  
$S$, $T$  and $B(V)$ are ${\bf Z}^m$-graded.  
Let $$\Delta=\{ (a,a,\dots,a)\in \ZZ^m : a\in \ZZ\}.$$
By construction, the diagonal algebra $$B(V)_\Delta=\bigoplus_{b\in \ZZ^m} B(V)_b$$ coincides with $K[V_1\cdots V_my_1\cdots y_m]$ and  hence 
$$B(V)_\Delta=A(V).$$
 
 For $i=1,\dots, m$ let $\{ f_{ij} :  j=1,\dots,d_i\}$  be a basis of $V_i$ and complete it to a basis of $R_1$ with elements  $\{ f_{ij} : j=d_i+1,\dots,n\}$ (no matter how).
Set $T(V)=K[t_{ij} : 1\leq i\leq m, 1\leq j\leq d_i]$.  We have presentations: 
$$\begin{array}{lll} 
\phi:  T     \to S &      \mbox{  with }  t_{ij}\to y_if_{ij}  &  \mbox{ for all } i,j \\  \\ 
\phi': T(V)\to B(V) &      \mbox{  with }  t_{ij}\to y_if_{ij}  & \mbox{ for all } i \mbox{ and } 1\leq j\leq d_i
\end{array}
$$
We have: 
\begin{lemma}\label{lemon1}
 Suppose that $\ker \phi'$ has a Gr\"obner basis (with respect to some term order $>$) of elements of degrees bounded above by $(1,1,\dots,1)\in \ZZ^m$. Then $A(V)$ is Koszul. 
\end{lemma}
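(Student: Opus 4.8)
The plan is to present $A(V)$ as a quotient of a Koszul Segre algebra and then bound its regularity over that algebra via Lemma \ref{blabla} and Theorem \ref{Giux1}. Since $\phi'$ is $\ZZ^m$-graded (each $t_{ij}$ and its image $y_if_{ij}$ have multidegree $e_i$), restricting it to the main diagonal yields a surjection $\phi'_\Delta\colon T(V)_\Delta\to B(V)_\Delta=A(V)$, so that $A(V)=T(V)_\Delta/\ker\phi'_\Delta$ is a quotient of $T(V)_\Delta$. The algebra $T(V)_\Delta$ is the Segre product of the polynomial rings $K[t_{i1},\dots,t_{id_i}]$, $i=1,\dots,m$, and Segre products of polynomial rings are Koszul by the multigraded analogue of Theorems \ref{cc1} and \ref{Segremod}. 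By Theorem \ref{Giux1}(1) it therefore suffices to show that $\reg_{T(V)_\Delta}(A(V))\le 1$.

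To estimate this regularity I would take the $\ZZ^m$-graded minimal free resolution $\FF$ of $B(V)$ over $T(V)$ and apply the exact functor $(-)_\Delta$, obtaining a complex $\GG=\FF_\Delta$ of $T(V)_\Delta$-modules with $H_0(\GG)=A(V)$ and $H_j(\GG)=0$ for $j>0$. Each term $G_j=(F_j)_\Delta$ is a direct sum of diagonal modules $T(V)(-\mathbf a)_\Delta$, where $\mathbf a$ runs over the multidegrees of the $j$-th syzygies of $B(V)$. By the multigraded analogue of Theorem \ref{Segremod}(1) each such module has a linear resolution over $T(V)_\Delta$, with $\reg_{T(V)_\Delta}\big(T(V)(-\mathbf a)_\Delta\big)=\max_i a_i$. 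Feeding these into the ``moreover'' part of Lemma \ref{blabla} (the homology of $\GG$ vanishes in positive degrees, so only the $\alpha$-term survives) gives
$$\reg_{T(V)_\Delta}(A(V))\le \sup_{j\ge 0}\ \max\big\{\, \max_i a_i - j \ :\ \beta^{T(V)}_{j,\mathbf a}(B(V))\ne 0 \,\big\}.$$
Thus everything reduces to the componentwise bound $\mathbf a\le(j,\dots,j)$ on every $j$-th syzygy multidegree $\mathbf a$ of $B(V)$, which would even yield $\reg_{T(V)_\Delta}(A(V))\le 0$.

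The main obstacle is precisely this multidegree bound, and it is where the Gr\"obner hypothesis is used. By upper semicontinuity of Betti numbers under Gr\"obner degeneration one has $\beta^{T(V)}_{j,\mathbf a}(B(V))\le \beta^{T(V)}_{j,\mathbf a}\big(T(V)/\ini_{>}(\ker\phi')\big)$, so it is enough to control the syzygy multidegrees of the monomial algebra $T(V)/\ini_{>}(\ker\phi')$. By hypothesis $\ini_{>}(\ker\phi')$ is generated by monomials of multidegree $\le(1,\dots,1)$; such a monomial is squarefree and uses at most one variable $t_{ij}$ from each block $i$. I would then read the multidegrees off the Taylor resolution: its $j$-th term is generated by least common multiples of $j$ such generators, and in each block $i$ the lcm involves at most $j$ of the variables $t_{i1},\dots,t_{id_i}$, so its multidegree is $\le(j,\dots,j)$. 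The two delicate points to verify are exactly this lcm computation (that the block-multilinear squarefree shape forces each coordinate of the lcm multidegree to be at most the homological degree) and the multigraded linearity of the diagonal modules invoked above; once both are in place, Lemma \ref{blabla} and Theorem \ref{Giux1} finish the argument.
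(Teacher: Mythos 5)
Your proof is correct, but it takes a genuinely different route from the one in the paper. The paper's argument is much shorter and stays entirely inside the semigroup ring $T(V)_\Delta$: setting $I=\ker\phi'$, it observes that $T(V)_\Delta$ is a multiple Segre product and hence strongly Koszul, that the hypothesis forces $\ini_>(I_\Delta)$ to be generated by a subset of the semigroup generators of $T(V)_\Delta$, so that $T(V)_\Delta/\ini_>(I_\Delta)$ is Koszul by Lemma \ref{filtflag}, and then concludes that $A(V)=T(V)_\Delta/I_\Delta$ is Koszul by Gr\"obner deformation. You instead degenerate $I$ inside the polynomial ring $T(V)$, read the syzygy multidegree bound $\mathbf a\le(j,\dots,j)$ off the Taylor complex of the block-multilinear monomial ideal $\ini_>(I)$, and transfer it across the diagonal functor via Lemma \ref{blabla} and Theorem \ref{Giux1} --- exactly the machine the paper uses for Theorems \ref{cc1}, \ref{cc2} and \ref{Segremod}. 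Both points you flag as delicate do check out: the lcm of $j$ monomials each of multidegree at most $(1,\dots,1)$ has block-$i$ degree at most $j$ because each factor contributes at most one variable from block $i$; and the linearity of $T(V)(-\mathbf a)_\Delta$ with $\reg_{T(V)_\Delta}=\max_i a_i$ is the $m$-factor analogue of Theorem \ref{Segremod}(1), available in \cite{CHTV}. What your route buys is a quantitatively stronger conclusion, namely $\reg_{T(V)_\Delta}(A(V))=0$, i.e.\ $I_\Delta$ has a linear $T(V)_\Delta$-resolution; what the paper's route buys is brevity and independence from the multigraded diagonal machinery, needing only the Koszul filtration of the Segre product and Betti-number semicontinuity.
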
 
\begin{proof} Set $I=\ker \phi'$. Applying $\Delta$ to the presentation $B(V)=T(V)/I$ we obtain $A(V)=T(V)_\Delta/I_\Delta$. Now $T(V)_\Delta$ is a multiple Segre product and hence it is strongly Koszul (the argument is similar to the one for the Veronese case) and by assumption $\ini_>(I_\Delta)$ is generated by a subset of the semigroup generators of $T(V)_\Delta$. But then $T(V)/\ini_>(I_\Delta)$ is Koszul because of the strongly Koszul property of $T(V)_\Delta$. Hence $T(V)_\Delta/I_\Delta$ is Koszul by Gr\"obner deformation. 
\end{proof} 

Since, by construction, $\ker \phi'=\ker \phi \cap T(V)$, a Gr\"obner basis of $\ker \phi'$ can be obtained from a lexicographic Gr\"obner basis of $\ker \phi$ by elimination.  Therefore, combining this point of view  with \ref{lemon1} we have that Theorem \ref{palg} is a corollary of:
\begin{lemma} 
\label{lemon2} 
The ideal $\ker \phi$ has a universal Gr\"obner basis whose elements  of degrees bounded above by $(1,1,\dots,1)\in \ZZ^m$.  
\end{lemma}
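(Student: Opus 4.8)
The plan is to recognise $\ker\phi$ as a linear, grading-preserving twist of the classical Segre determinantal ideal, to pin down its generators in the two-row multidegrees, and then to promote ``generated in low multidegree'' to ``universal Gr\"obner basis in low multidegree''. First, let $\psi\colon T\to S$ be the standard Segre presentation $t_{ij}\mapsto y_ix_j$; its image is again $S=K[y_ix_j]$ and its kernel is the ideal $I_2$ of $2\times 2$ minors of the generic matrix $(t_{ij})_{1\le i\le m,\,1\le j\le n}$. Writing $f_{ij}=\sum_k a^{(i)}_{jk}x_k$ with $(a^{(i)}_{jk})_k\in\GL_n(K)$ for each row $i$, one has $\phi=\psi\circ\Lambda$, where $\Lambda$ is the linear automorphism of $T$ given by $\Lambda(t_{ij})=\sum_k a^{(i)}_{jk}t_{ik}$. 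Since $\Lambda$ mixes only variables sharing a row, it is homogeneous for the $\ZZ^m$-grading $\deg t_{ij}=e_i$; hence $\ker\phi=\Lambda^{-1}(I_2)$ is $\ZZ^m$-homogeneous, and the question about multidegrees of a universal Gr\"obner basis becomes a question about this twist of $I_2$.

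Next I would locate the generators. In a single-row multidegree $ce_i$ the map $\phi$ restricts to $T_{ce_i}\to y_i^cR_c$, an isomorphism because $\Sym(R_1)=R$; thus $(\ker\phi)_{ce_i}=0$, and no monomial of $\ini_\prec(\ker\phi)$ is supported on a single row. In a two-row multidegree $e_i+e_k$ with $i\ne k$ the relations form the kernel of $R_1\otimes R_1\to R_2$, a copy of $\wedge^2R_1$ of dimension $\binom n2=n^2-\binom{n+1}{2}$; concretely these are exactly the images $\Lambda^{-1}(M)$ of the $2\times2$ minors $M$ built from rows $i$ and $k$. That these quadrics generate $\ker\phi$ is immediate from $\ker\phi=\Lambda^{-1}(I_2)$ together with the classical fact that the $2\times2$ minors generate $I_2$. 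This already shows $\ker\phi$ is generated in multidegrees $e_i+e_k\le(1,\dots,1)$.

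The hard part is to upgrade this to a universal Gr\"obner basis. For a fixed term order $\prec$ one must show $\ini_\prec(\ker\phi)$ is generated by the leading terms of the multidegree-$(e_i+e_k)$ elements, i.e.\ by squarefree quadratic monomials $t_{ia}t_{kb}$ with $i\ne k$. For the untwisted ideal $I_2$ this is the theorem that the $2\times2$ minors are a universal Gr\"obner basis (equivalently, they form the Graver basis of the Segre toric ideal); the obstacle is that $\Lambda$ is not a monomial change of variables, so its initial behaviour cannot be transported term by term. I would therefore argue by a Hilbert-function count: letting $J_\prec\subseteq\ini_\prec(\ker\phi)$ be the monomial ideal generated by those quadratic leading terms, one has $\dim_K(T/J_\prec)_d\ge\dim_K(T/\ker\phi)_d=\dim_KS_d$ in every multidegree $d$, so it suffices to prove the reverse inequality. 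Equivalently, one shows that every monomial of multidegree $d\not\le(1,\dots,1)$ lying in $\ini_\prec(\ker\phi)$ already has a proper divisor in $J_\prec$: such a monomial, being supported on at least two rows (single-row monomials are standard by the previous step), contains an ``unsorted'' two-row pair whose straightening is governed by one of the quadratic relations, and for every term order one of the two monomials of that relation is its leading term. Carrying out this descent uniformly in $\prec$ and in the bases $f_{ij}$ --- that is, checking that the bilinear forms $\Lambda^{-1}(M)$ support the same sorting combinatorics as honest $2\times2$ minors --- is the crux, and is the step I expect to be most delicate.

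Finally, once $\ini_\prec(\ker\phi)$ is generated in multidegrees $\le(1,\dots,1)$ for every $\prec$, the union over all term orders of the (finitely many) reduced Gr\"obner bases is a universal Gr\"obner basis whose elements all have multidegree $\le(1,\dots,1)$, which is the assertion of the lemma. Combined with Lemma \ref{lemon1}, after the elimination step $\ker\phi'=\ker\phi\cap T(V)$, this yields Theorem \ref{palg}.
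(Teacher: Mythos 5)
Your first reduction coincides with the paper's: $\ker\phi=\Lambda^{-1}(I_2(t))$ for a row-by-row, $\ZZ^m$-graded linear substitution $\Lambda$, so $\ker\phi$ and every one of its initial ideals have the $\ZZ^m$-graded Hilbert function of $I_2(t)$, and the problem is to bound the multidegrees of generators of each $\ini_\prec(\ker\phi)$. But the step you yourself flag as ``the crux'' is the entire content of the lemma, and it is not carried out; moreover the route you sketch for it aims at the wrong target. You propose a sorting/straightening descent showing that the squarefree quadratic leading terms $t_{ia}t_{kb}$ generate $\ini_\prec(\ker\phi)$ for every $\prec$. For the untwisted Segre ideal this works because the $2\times 2$ minors have a rigid pair structure, but after the twist the relations in multidegree $e_i+e_k$ form an essentially arbitrary $\binom{n}{2}$-dimensional space of bilinear forms, and there is no reason its leading monomials support the same sorting combinatorics for every term order and every choice of bases $f_{ij}$: this is not a delicate verification left to the reader, it is where the theorem lives. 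In addition, what you are descending towards --- a quadratic universal Gr\"obner basis --- is strictly stronger than the assertion of the lemma, which only bounds multidegrees by $(1,\dots,1)$ and therefore permits Gr\"obner basis elements of total degree up to $m$; nothing in your argument substantiates the stronger claim, and the machinery available delivers only the weaker one.

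The paper closes exactly this gap by a different mechanism: it invokes the Cartwright--Sturmfels theorem (Lemmas \ref{lemon3} and \ref{lemon4}) that \emph{every} ideal of $T$ with the $\ZZ^m$-graded Hilbert function of $I_2(t)$ --- in particular every $\ini_\prec(\ker\phi)$ --- is generated in multidegrees bounded by $(1,\dots,1)$. That result is proved not by straightening but by multigraded generic initial ideals: one identifies $\gin(I_2(t))$ with an explicit radical Borel-fixed ideal $J$ (Lemmas \ref{lemon5} and \ref{lemon6}, together with the enumerative identity (\ref{fk1})), and the rigidity statement for radical Borel-fixed multigraded ideals does the rest. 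Your Hilbert-function inequality $\dim_K(T/J_\prec)_d\ge\dim_K(T/\ker\phi)_d$ is the right frame, but the reverse inequality is precisely what is missing; to complete the proof you must either import Lemma \ref{lemon3} or supply an argument of comparable strength.
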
 
Observe that $\phi$ is a presentation of the Segre product $S$ but with respect to a non-necessarily monomial basis. Hence $\ker \phi$
 is obtained from the ideal $I_2(t)$ of the $2$-minors of the $m\times n$ matrix 
 $$t=(t_{ij})$$ 
 by a change of coordinates preserving the $\ZZ^m$-graded structure. 
 Since the Hilbert function does not change under taking initial ideals, it is enough to prove the following (very strong) assertion: 
 
 \begin{lemma} 
\label{lemon3} 
Every ideal of $T$ that has the $\ZZ^m$-graded Hilbert function  of $I_2(t)$ is generated in degrees  bounded above by $(1,1,\dots,1)\in \ZZ^m$.  
\end{lemma}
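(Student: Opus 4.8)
The plan is to reduce to the case of a monomial ideal and then carry out a combinatorial analysis of the standard monomials. Write $\mm_T$ for the maximal homogeneous ideal of $T=K[t_{ij}]$, and for $c\in\NN^m$ set $\abs{c}=\sum_i c_i$; recall $t_{ij}$ has multidegree $e_i$, so a monomial of multidegree $c$ has total row-$i$ degree $c_i$. A direct count gives
$$\HF(T,c)=\prod_{i=1}^m\binom{c_i+n-1}{n-1}\qquad\text{and}\qquad \HF(T/I_2(t),c)=\binom{\abs{c}+n-1}{n-1},$$
the latter because $T/I_2(t)$ is the Segre product $K[y_ix_j]$, whose multidegree-$c$ component has basis $\{(\prod_i y_i^{c_i})\,w : w\in K[x_1,\dots,x_n]_{\abs{c}}\}$. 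We must show that any ideal $J$ realizing this Hilbert function has all minimal generators in multidegrees bounded above by $\mathbf{1}=(1,\dots,1)$.

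First I would reduce to monomial ideals. Fix a term order $\prec$ refining the $\ZZ^m$-grading. Since $J$ is $\ZZ^m$-homogeneous, so is $\ini_{\prec}(J)$, and $\HF(T/\ini_{\prec}(J),-)=\HF(T/J,-)$; moreover $\beta_{0,c}^T(J)\le\beta_{0,c}^T(\ini_{\prec}(J))$ for every $c$. Hence it suffices to prove the statement for monomial ideals. For a monomial ideal, being generated in multidegrees $\le\mathbf{1}$ is equivalent to $\beta_{0,c}^T(J)=0$, i.e. $J_c=(\mm_T J)_c$, for every $c$ with $\max_i c_i\ge 2$: a monomial $u\in J_c$ fails to be a minimal generator exactly when $u/t_{pq}\in J$ for some variable $t_{pq}\mid u$. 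Next I would dispose of the single-row degrees: if $c=k e_i$ is supported on one row then $\HF(T,c)=\binom{k+n-1}{n-1}=\HF(T/J,c)$, so $J_c=0$; in particular $J$ has no minimal generator in such a degree, and its quadratic generators all lie in squarefree multidegrees $e_i+e_{i'}$ ($i\ne i'$), where $\dim_K J_{e_i+e_{i'}}=n^2-\binom{n+1}{2}=\binom{n}{2}$.

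The heart of the matter is to rule out minimal generators in a multidegree $c$ having some $c_i\ge2$ and support on at least two rows. Here I would exploit the sorting/straightening structure of the Segre via the collapse map $\pi\colon T\to K[x_1,\dots,x_n]$, $t_{ij}\mapsto x_j$, which sends a multidegree-$b$ monomial to a degree-$\abs{b}$ monomial. The key claim is that $\pi$ restricts to a bijection between the standard monomials of $T/J$ of each multidegree $b$ and the degree-$\abs{b}$ monomials of $K[x_1,\dots,x_n]$; since the two sets already have the same cardinality $\binom{\abs{b}+n-1}{n-1}$, this amounts to injectivity of $\pi$ on standard monomials. Granting this bijection, two monomials in the same $\pi$-fiber within a fixed multidegree are joined by a chain of elementary swaps $t_{ij}t_{i'j'}\leftrightarrow t_{ij'}t_{i'j}$, i.e. by the defining moves of the $2\times2$ minors; tracking such a chain from a putative minimal generator $u$ (with $\max_i c_i\ge2$) to the unique standard monomial $w$ with $\pi(w)=\pi(u)$ produces a divisor of $u$ already lying in $J$, contradicting minimality. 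This forces $J$ to be generated in multidegrees $\le\mathbf{1}$.

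The main obstacle is precisely the injectivity/straightening step. Because $J$ is prescribed only through its Hilbert function, its squarefree-degree pieces $J_c$ (for $c\le\mathbf{1}$) are not a priori the sorting initial ideal of the minors, but merely subspaces of the correct dimension; one must show that the compatibility imposed by the Hilbert function in all multidegrees forces them to behave like a sortable set, so that $\pi$ is indeed injective on standard monomials and the minor-swap argument closes. Making this rigorous is where the genuine combinatorial content lies, and it is carried out in the spirit of the standard-monomial theory of $2\times2$ minors (see \cite{CHTV}).
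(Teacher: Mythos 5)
Your reduction to monomial ideals via $\ini_\prec$ is fine, but the heart of your argument rests on a claim that is false: the collapse map $\pi\colon T\to K[x_1,\dots,x_n]$, $t_{ij}\mapsto x_j$, need \emph{not} be injective on the standard monomials of an arbitrary monomial ideal $J$ with the multigraded Hilbert function of $I_2(t)$. Take $m=n=2$, so $I_2(t)=(t_{11}t_{22}-t_{12}t_{21})$, and let $J=(t_{11}t_{21})$. Since $t_{11}t_{21}$ is a nonzerodivisor of degree $(1,1)$, one gets $\HF(T/J,(a_1,a_2))=(a_1+1)(a_2+1)-a_1a_2=a_1+a_2+1=\HF(T/I_2(t),(a_1,a_2))$ for all $(a_1,a_2)$, so $J$ is an ideal of the kind you must treat; yet its standard monomials in degree $(1,1)$ are $t_{11}t_{22},\,t_{12}t_{21},\,t_{12}t_{22}$, and the first two both map to $x_1x_2$ (while $x_1^2$ is not hit at all). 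Equal cardinalities do not force a bijection here. The subsequent step is also not an argument: a chain of swaps $t_{ij}t_{i'j'}\leftrightarrow t_{ij'}t_{i'j}$ moves within a $\pi$-fiber but does not produce a \emph{divisor} of $u$ lying in $J$, so even granting the bijection you have not derived a contradiction with minimality. In short, the ``compatibility imposed by the Hilbert function'' that you defer to is not a technical detail to be filled in; it is the entire content of the lemma, and the mechanism you propose for extracting it does not work.

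For comparison, the paper takes a genuinely different route that avoids analyzing $J$ degree by degree. One first exhibits an explicit \emph{radical, Borel-fixed} monomial ideal with the same $\ZZ^m$-graded Hilbert function as $I_2(t)$ (Lemma \ref{lemon5}; it is the multigraded generic initial ideal of $I_2(t)$, and the equality of Hilbert functions is checked by the binomial identity (\ref{fk1})). One then invokes the general rigidity theorem of \cite{CDG} (Lemma \ref{lemon4}, in the spirit of Cartwright--Sturmfels \cite{CS}): \emph{any} ideal sharing the multigraded Hilbert function of a radical Borel-fixed ideal is generated in degrees bounded by $(1,\dots,1)$. All the combinatorial difficulty is thus concentrated in identifying the gin and in the quoted general theorem, rather than in a straightening argument for an unknown $J$. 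If you want a self-contained proof, you would need to reprove something of the strength of Lemma \ref{lemon4}; the sorting heuristic alone will not close the gap.
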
 

Lemma \ref{lemon3} has been proved by Cartwright and Sturmfels \cite{CS} using multigraded generic initial ideals and a result proved in \cite{C}. This approach has been generalized in \cite{CDG} to identify universal Gr\"obner bases of ideals of maximal minors of matrices of linear forms hence generalizing the classical result of Bernstein, Sturmfels and Zelevinsky \cite{BZ, SZ}, see also \cite{K}.  In details,  the group    $\GL_n(K)^m$ acts as the group of $\ZZ^m$-graded $K$-algebra automorphisms on $T$ by linear substitution (row by row). An ideal $I\subset T$ is Borel-fixed if it is invariant under the action of the Borel subgroup $B_n(K)^m$ of $\GL_n(K)^m$. Here $B_n(K)$ is the group of upper triangular matrices. In \cite{CDG} is has been proved that 

 \begin{lemma} 
\label{lemon4} 
If $J\subset T$ is Borel-fixed and radical then every ideal $I$ with the $\ZZ^m$-graded Hilbert function  of $J$ is generated in degrees bounded above by $(1,1,\dots,1)\in \ZZ^m$. 
\end{lemma}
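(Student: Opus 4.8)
The plan is to reduce to the Borel-fixed case by a generic initial ideal, to settle that case combinatorially when the ideal is radical, and finally to bridge the gap between ``radical'' and ``Borel-fixed with the given Hilbert function''. Write $\mathbf 1=(1,\dots,1)\in\ZZ^m$. Fix a term order $>$ refining the $\ZZ^m$-grading and, for generic $g\in\GL_n(K)^m$, form the multigraded generic initial ideal $\Gin(I)=\ini_>(g\cdot I)$. By construction $\Gin(I)$ is Borel-fixed, and passing to an initial ideal leaves the $\ZZ^m$-graded Hilbert function unchanged, so $\Gin(I)$ has the Hilbert function of $J$. Moreover $\beta^T_{0,\mathbf a}(I)\le \beta^T_{0,\mathbf a}(\Gin(I))$ for every $\mathbf a\in\ZZ^m$, by upper-semicontinuity of the graded Betti numbers under Gr\"obner degeneration; hence if $\Gin(I)$ is generated in degrees bounded above by $\mathbf 1$, so is $I$. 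It therefore suffices to prove the statement for a Borel-fixed ideal, i.e. to show that every Borel-fixed ideal with the Hilbert function of the radical Borel-fixed ideal $J$ is generated in degrees $\le\mathbf 1$.

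The combinatorial core is the case of a radical Borel-fixed ideal $J'$ itself. Being Borel-fixed, $J'$ is a monomial ideal, strongly stable in each of the $m$ rows separately; being radical, its minimal generators are squarefree. Let $u=u_1\cdots u_m$ be a minimal generator, with $u_i$ its part in the $i$-th row, and suppose $\deg u_i\ge 2$ for some $i$. As $u$ is squarefree, $u_i$ contains two distinct variables $t_{ij}$ and $t_{ik}$ with $j<k$; strong stability in row $i$ gives $v=(t_{ij}/t_{ik})u\in J'$. Now $v$ is divisible by $t_{ij}^2$, hence not squarefree, and its squarefree part equals $u/t_{ik}$. Since $J'$ is a squarefree monomial ideal, any squarefree generator dividing $v$ has support contained in that of $v$ and so divides the squarefree part of $v$; thus $u/t_{ik}\in J'$. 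But $u/t_{ik}$ is a proper divisor of $u$, contradicting the minimality of $u$. Hence $\deg u_i\le 1$ for every $i$, i.e. every minimal generator of $J'$ has multidegree $\le\mathbf 1$.

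What remains---and what I expect to be the main obstacle---is that $\Gin(I)$ need not be radical, so the combinatorial lemma does not apply to it directly. The missing ingredient is a rigidity statement: a radical Borel-fixed ideal is the \emph{unique} Borel-fixed ideal with its $\ZZ^m$-graded Hilbert function, equivalently, every Borel-fixed ideal sharing that Hilbert function is itself radical. Granting this, $\Gin(I)=J$ is radical and the previous paragraph finishes the proof. To prove the rigidity I would compare $\Gin(I)$ with its radical $\sqrt{\Gin(I)}$, which is again Borel-fixed and radical and hence, by the combinatorial step, generated in degrees $\le\mathbf 1$; since $\Gin(I)\subseteq\sqrt{\Gin(I)}$ always, it is enough to see that this inclusion preserves the Hilbert function. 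The difficulty is exactly here: in a fixed multidegree $\mathbf a\le\mathbf 1$ the strongly-stable-per-row subspaces of a given dimension are far from unique, since they correspond to arbitrary down-closed sets in the relevant product of chains $\{1,\dots,n\}$, so one cannot argue degree by degree. It is the global ideal structure---how these down-sets in different multidegrees are linked by multiplication---that rigidifies them. I would establish this by induction on the multidegree, using that $J$ is generated in degrees $\le\mathbf 1$ together with the control on multigraded generic initial ideals and Hilbert functions provided by \cite{C}, to show $J_{\mathbf a}\subseteq\Gin(I)_{\mathbf a}$ for all $\mathbf a$ and then conclude equality from the coincidence of Hilbert functions.
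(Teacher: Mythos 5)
The paper does not actually prove this lemma: it is quoted as a theorem from \cite{CDG}, so there is no internal proof to compare yours against. Judged on its own terms, your attempt contains a genuine gap, and to your credit you have located it yourself. The two steps you do carry out are fine. The reduction to Borel-fixed ideals via the multigraded generic initial ideal is sound (the $\ZZ^m$-graded Hilbert function is preserved and generation degrees can only increase under $\Gin$), and the combinatorial observation that a \emph{radical} Borel-fixed monomial ideal is generated in multidegrees bounded by $(1,\dots,1)$ is correct: since the minimal generators of a radical monomial ideal are squarefree, your passage from $v=(t_{ij}/t_{ik})u$ to its squarefree part $u/t_{ik}$ does contradict minimality of $u$. (A side remark: the per-row exchange property you use holds for squarefree monomials in any characteristic, since the relevant binomial coefficients are $1$, so there is no characteristic issue in that step.)

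But these are the easy parts, and the entire content of the lemma sits in the rigidity statement you flag at the end: a Borel-fixed ideal with the $\ZZ^m$-graded Hilbert function of a radical Borel-fixed ideal must itself be radical (equivalently, must equal $J$). What you offer for this --- comparing $\Gin(I)$ with $\sqrt{\Gin(I)}$ and hoping the inclusion preserves Hilbert functions, or an unspecified induction on the multidegree invoking \cite{C} --- is a plan, not a proof, and you say so. Note that uniqueness of a Borel-fixed ideal with a prescribed Hilbert function is simply false without the radicality hypothesis (already for $m=1$ there are many strongly stable ideals with the same Hilbert function, the lex-segment ideal being only one of them), so the radical hypothesis must enter globally, not multidegree by multidegree; your own remark about the profusion of down-closed sets in a product of chains is exactly the reason a local argument cannot work. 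This rigidity is precisely the theorem established in \cite{CDG} (building on \cite{CS} and \cite{C}), and it is the hard core of the statement. Your proposal therefore reduces the lemma correctly to the cited result but does not prove it.
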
 

Summing up, to conclude the proof of \ref{palg} it is enough to prove that: 

\begin{lemma} 
\label{lemon5} 
The ideal $I_2(t)$ has the $\ZZ^m$-graded Hilbert function of the radical and Borel-fixed  ideal $J$ generated by the monomials  $t_{i_1j_1}\cdots t_{i_kj_k}$ satisfying the following conditions: 
$$\begin{array}{l} 1\leq i_1<\dots<i_k\leq m, \\  1\leq j_1,\dots, j_k\leq n, \\   j_1+\dots+j_k\geq n+k. \end{array}$$
 \end{lemma}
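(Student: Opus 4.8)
The plan is to pin down both $\ZZ^m$-graded Hilbert functions explicitly and verify that they coincide in every multidegree $a=(a_1,\dots,a_m)\in\NN^m$; throughout I write $|a|=a_1+\dots+a_m$. That $J$ is radical is immediate, since it is generated by squarefree monomials. That $J$ is Borel-fixed follows from the defining inequality $j_1+\dots+j_k\ge n+k$: under the action row by row, an upper triangular substitution sends $t_{ij}$ to a combination of the $t_{il}$ with $l\ge j$, so the relevant stability is closure under raising a column index within a row, and raising any $j_\ell$ only increases the left-hand side of $j_1+\dots+j_k\ge n+k$. Hence the monomial generators of $J$ are stable and $J$ is Borel-fixed. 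The real content is the equality of Hilbert functions, which I would reduce to a combinatorial identity.

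For the $I_2(t)$ side I would use that $I_2(t)$ is the kernel of the (honest) Segre map $t_{ij}\mapsto y_ix_j$, so that $T/I_2(t)$ is identified with the Segre product $K[y_ix_j]$. Its multidegree-$a$ component is spanned by the monomials $y_1^{a_1}\cdots y_m^{a_m}x^b$ with $b\in\NN^n$ and $|b|=|a|$, and every such $b$ actually occurs because a nonnegative integer matrix with prescribed row sums $a_i$ and column sums $b_j$ exists precisely when $|a|=|b|$. These monomials are linearly independent, so
\[
\HF(T/I_2(t),a)=\#\{b\in\NN^n:|b|=|a|\}=\binom{|a|+n-1}{n-1},
\]
a quantity depending only on $|a|$.

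For the $J$ side I would count the standard monomials, i.e. the monomials $\mu$ of multidegree $a$ not lying in $J$. Setting $M_i=\max\{\,j:t_{ij}\mid\mu\,\}$ for each row $i$ appearing in $\mu$, divisibility of $\mu$ by a generator is governed by the largest column used in each row, and one checks that $\mu\notin J$ if and only if $\sum_i(M_i-1)\le n-1$. Grouping monomials by the data $(M_i)_i$ and using that a single row of degree $a_i$ contains exactly $\binom{a_i+u_i-1}{u_i}$ monomials with maximal column $M_i=u_i+1$, the number of standard monomials equals $\sum_{\sum u_i\le n-1}\prod_i\binom{a_i+u_i-1}{u_i}$. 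The identity $\sum_{u\ge0}\binom{a_i+u-1}{u}w^u=(1-w)^{-a_i}$ collapses the product to $(1-w)^{-|a|}$, so this count is the sum of the first $n$ coefficients of $(1-w)^{-|a|}$:
\[
\sum_{\substack{(u_1,\dots,u_m)\in\NN^m\\ u_1+\dots+u_m\le n-1}}\ \prod_{i=1}^m\binom{a_i+u_i-1}{u_i}
=\sum_{d=0}^{n-1}\binom{|a|+d-1}{d}
=\binom{|a|+n-1}{n-1},
\]
the last equality being the hockey-stick identity.

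Comparing the two computations yields $\HF(T/I_2(t),a)=\HF(T/J,a)=\binom{|a|+n-1}{n-1}$ for every $a$, which is the assertion. I expect the only delicate point to be the standard-monomial characterization $\mu\notin J\iff\sum_i(M_i-1)\le n-1$: one must argue that among all selections of distinct rows and admissible columns the inequality $\sum(j_\ell-1)\ge n$ is easiest to achieve by taking every appearing row together with its maximal column, so that membership in $J$ reduces to the single inequality $\sum_i(M_i-1)\ge n$. Once this reduction is established, the two enumerations above are routine.
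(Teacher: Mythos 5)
Your proposal is correct and follows essentially the same route as the paper's direct argument: identify $T/I_2(t)$ with the Segre product to get $\binom{|a|+n-1}{n-1}$ in multidegree $a$, characterize membership in $J$ by the single inequality on the row-maxima $M_i$, count standard monomials by the data $(M_i)$, and reduce to the same binomial identity. The only (cosmetic) differences are that you close the identity with the generating function $(1-w)^{-|a|}$ and the hockey-stick summation where the paper counts monomials in a disjoint union of variable sets, and that your convention-free handling of rows with $a_i=0$ lets you skip the paper's reduction to full support.
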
 
 This is done in \cite{C} by proving that $J$ is indeed the multigraded generic initial  ideal $\gin(I_2(t))$ of $I_2(t)$. 
 The inclusion $J\subseteq \gin(I_2(t))$ is a consequence of the following Lemma \ref{lemon6}.  The other inclusion is proved by checking that $J$ is pure with codimension and degree equal to that of $I_2(t)$. 
 
 \begin{lemma} 
\label{lemon6} 
Let  $V_1,\dots,V_m$  be  subspaces of the vector space of the linear forms $R_1$.   If   $\sum_{i=1}^m  \dim V_i \geq n+m$  then $\dim
\prod_{i=1}^m V_i < \prod_{i=1}^m  \dim V_i$, i.e.   there is a non-trivial linear relation  among the  generators of the
product  $\prod_{i=1}^m V_i$ obtained by multiplying $K$-bases of the $V_i$.
 \end{lemma}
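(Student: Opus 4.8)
The plan is to prove the contrapositive-friendly statement directly by exhibiting a nontrivial linear relation among the products $f_{1j_1}f_{2j_2}\cdots f_{mj_m}$, where $f_{ij}$ ranges over a basis of $V_i$. The key observation is a dimension count inside the polynomial ring $R=K[x_1,\dots,x_n]$. The product $\prod_{i=1}^m V_i$ lives in the degree-$m$ component $R_m$, and the naive number of generators obtained by multiplying bases is $\prod_{i=1}^m \dim V_i$. If these generators were linearly independent, the subspace $\prod_{i=1}^m V_i$ would have dimension exactly $\prod_{i=1}^m \dim V_i$, so it suffices to produce one linear dependence whenever $\sum_{i=1}^m \dim V_i \geq n+m$.

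First I would set up an induction on $m$. For $m=1$ the hypothesis reads $\dim V_1 \geq n+1$, which is impossible since $V_1 \subseteq R_1$ has dimension at most $n$; so the base case is vacuous (or one starts at $m=2$). For the inductive step, write $d_i=\dim V_i$ and consider the multiplication map
\begin{equation}
\mu : V_1 \otimes_K \left(\prod_{i=2}^m V_i\right) \longrightarrow \prod_{i=1}^m V_i .
\end{equation}
The strategy is to bound $\dim \prod_{i=1}^m V_i$ by relating it to $\dim \prod_{i=2}^m V_i$ and then invoke the inductive hypothesis whenever the smaller configuration already forces a relation. The delicate case is when $\sum_{i=2}^m d_i < n + (m-1)$, i.e. the truncated hypothesis fails, so that the last $m-1$ subspaces may have an independent product; here the hypothesis $\sum_{i=1}^m d_i \geq n+m$ forces $d_1$ to be large, roughly $d_1 > n - \dim\prod_{i=2}^m V_i$, and I would use a dimension-of-intersection argument to extract the relation.

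The cleanest route to the main step is likely the following counting lemma: for subspaces $W \subseteq R_1$ and any subspace $U \subseteq R_m$, one has
\begin{equation}
\dim(W \cdot U) \leq \dim U + \dim W - 1
\end{equation}
whenever $W \cdot U \neq 0$ and, crucially, strict inequality $\dim(W\cdot U) < \dim U \cdot \dim W$ must arise once $\dim W$ and $\dim U$ are large relative to the ambient dimension. The hard part will be controlling precisely how the dimension of the product drops: the generators $f_{1j}\cdot g$ for $g$ a basis element of $\prod_{i=2}^m V_i$ are indexed by a set of size $d_1 \cdot \dim\prod_{i=2}^m V_i$, but they all lie in $R_m$, and I would argue that when $d_1 + \dim \prod_{i=2}^m V_i$ exceeds the generic-independence threshold, a pigeonhole on monomial supports (or a syzygy coming from the commutativity $f_{1j}f_{2k}=f_{2k}f_{1j}$ when two of the subspaces overlap) produces the required dependence. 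The most technical obstacle is ruling out the degenerate situation where all the products remain independent despite the dimension hypothesis; this is exactly where the bound $\sum d_i \geq n+m$ is sharp, and I expect the proof to hinge on the fact that $m$ linear forms whose dimensions sum past $n+m$ cannot be in "general position" simultaneously, so their pairwise or joint products must collide.
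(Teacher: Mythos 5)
The paper does not actually prove Lemma \ref{lemon6} in these notes; it is quoted from \cite{C}, where it is established en route to computing the multigraded generic initial ideal of $I_2(t)$. So your proposal has to stand on its own, and it does not: it is a plan rather than a proof, and the plan rests on two claims that fail. First, your ``counting lemma'' $\dim(W\cdot U)\le \dim W+\dim U-1$ is false as an upper bound: already for $W=\langle x_1,x_2\rangle$ and $U=\langle x_3,x_4\rangle$ in $K[x_1,\dots,x_4]$ one has $\dim(W\cdot U)=4>3$. (For spaces of forms in a polynomial ring the true inequality goes the other way, $\dim(W\cdot U)\ge\dim W+\dim U-1$, which is useless for the upper bound you need.) Second, the mechanisms you propose for the ``degenerate situation'' --- coincidences among the products, commutativity syzygies $fg=gf$ available when two subspaces share a $2$-dimensional piece, a pigeonhole on monomial supports --- cannot produce the relation in the case that carries all the content of the lemma. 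Take $n=m=4$ and four generic $2$-dimensional subspaces $V_1,\dots,V_4\subseteq R_1$, so that $\sum d_i=8=n+m$. Here all pairwise intersections $V_i\cap V_j$ are zero, so no commutativity relation exists; by unique factorization the $16$ products of basis vectors are pairwise non-proportional, so nothing ``collides''; and every sub-product of three of the spaces has the full dimension $8$ (since $d_i+d_j+d_k=6<n+3$), so no relation is inherited from fewer factors. Yet the $16$ products are linearly dependent: every product $\ell_1\ell_2\ell_3\ell_4$ vanishes on the four codimension-two linear subvarieties $V(V_i)\subseteq\PP^3$, and the quartics through four generic lines in $\PP^3$ form a space of dimension $35-20=15<16$. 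The dependence here is a global linear-span phenomenon, not a coincidence of individual products, and your sketch contains no tool that detects it.

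The parts of your setup that are sound --- reformulating the statement as non-injectivity of the multiplication map $V_1\otimes\cdots\otimes V_m\to R_m$, the vacuous base case $m=1$, disposing of the case where a sub-product already satisfies a relation, and the transposition relation when some $\dim(V_i\cap V_j)\ge 2$ --- are the standard first reductions, but they leave the essential case untouched, so the argument as proposed is not a proof.
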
 
 
 One can also prove directly that $T/I_2(t)$ and $T/J$ have the same $\ZZ^m$-graded Hilbert function in the following way. 
 For every   $a=(a_1,\dots,a_m)\in {\NN}^m$ we  
show that the vector space dimensions of $T/I_2(t)$ and of $T/J$ in multidegree $a$ are equal. By
induction on $m$ we may assume that $a$ has full support, i.e. $a_i>0$ for all $i$.  Since  $T/I_2(t)$ is the Segre product of 
 $K[y_1,\dots,y_m]$ and $K[x_1,\dots,x_n]$, a $K$-basis of $T/I_2(t)$ in degree $a$ is given by the monomials of the form
$\prod y_i^{a_i}p$ where $p$ is a monomial in the $x$'s of degree $\sum a_i$.  It
follows that  the dimension of
$T/I_2(t)$   in degree
$a$ equals: 
$$\binom{n-1+a_1+a_2+\dots+a_m  }{  n-1}.$$ 
Given a monomial  $p$    in the $t_{ij}$'s of multidegree $a$ we  set
$M_i(p)=\max\{ j : t_{ij}| p\}$.   It is easy to see that: 
$$p\in J \iff  \sum_{i=1}^m  M_i(p)\geq n+m.$$
For a fixed $c=(c_1,\dots,c_m)\in \{1,\dots, n\}^m$ the cardinality of the set
of the monomials $p$ in the $t_{ij}$'s with degree $a$ and  $M_i(p)=c_i$ is given by 
$$\prod_{i=1}^m   \binom{c_i-1+a_i-1 }{  c_i-1}.$$
Therefore the dimension of $T/J$ in multidegree $a$ is given by: 
$$\sum_c   \prod_{i=1}^m   \binom{c_i-1+a_i-1 }{  c_i-1}$$
where the sum is extended to all the $c=(c_1,\dots,c_m)\in \{1,\dots, n\}^m$ with   
$c_1+\dots+c_m<n+m$.  Replacing  $n-1$ with $n$ and $c_i-1$ with $c_i$,  we have to
prove the following identity: 
\begin{equation}
\label{fk1} 
\binom{n+a_1+a_2+\dots+a_m  }{  n}=\sum_c   
\prod_{i=1}^m   \binom{c_i+a_i-1 }{  c_i}
\end{equation}
where the sum is extended to all the $c=(c_1,\dots,c_m)\in {\NN}^m$ with   
$c_1+\dots+c_m\leq n$. 
The equality \ref{fk1}  is a specialization ($v=m+1$, $b_i=a_i$  for $i=1,\dots, m$ and $b_{m+1}=1$) of  the following identity: 
 
\begin{equation}
\label{fk2} 
\binom{n+b_1+b_2+\dots+b_v-1  }{  n}=\sum_c   
\prod_{i=1}^v   \binom{c_i+b_i-1 }{  c_i}
\end{equation}

where the sum is extended to all the $c=(c_1,\dots,c_v)\in {\NN}^v$ with   
$c_1+\dots+c_v= n$. 

Now the identity \ref{fk2} is easy:  both the  left and right side of it  count the number of
monomials of total degree $n$ in a set of variables which is a disjoint union of
subsets of cardinality $b_1,b_2,\dots,b_v$. 

\bigskip

\noindent{\bf Thanks}: We thank Giulio Caviglia, Alessio D'Ali, Emanuela De Negri and Dang Hop Nguyen for their valuable comments and suggestions upon reading preliminary versions of the present notes and Christian Krattenthaler for suggesting the proof of formula \ref{fk1}.

\end{document}